\newcommand{\mc}{\mathcal}
\newcommand{\mbb}{\mathbb}
\newcommand{\mr}{\mathrm}
\newcommand{\argmin}{\mathop{\rm argmin}\limits}
\newcommand{\veca}{\mathbf{a}}
\newcommand{\vecg}{\mathbf{g}}
\newcommand{\vecu}{\mathbf{u}}
\newcommand{\vecv}{\mathbf{v}}
\newcommand{\vecw}{\mathbf{w}}
\newcommand{\vecx}{\mathbf{x}}
\newcommand{\vecX}{\mathbf{X}}
\newcommand{\vecz}{\mathbf{z}}
\newcommand{\vecbeta}{\boldsymbol \beta}
\newcommand{\vectheta}{\boldsymbol \theta}
\newcommand{\vecvarrho}{\boldsymbol \varrho}
\renewcommand{\algorithmicrequire}{\textbf{Input:}}
\renewcommand{\algorithmicensure}{\textbf{Output:}}
\theoremstyle{plain}
\numberwithin{equation}{section}
\newtheorem{theorem}{Theorem}[section]
\newtheorem{proposition}{Proposition}[section]
\newtheorem{remark}{Remark}[section]
\newtheorem{lemma}{Lemma}[section]
\newtheorem{corollary}{Corollary}[section]
\newtheorem{definition}{Definition}[section]
\newtheorem{assumption}{Assumption}[section]
\begin{document}
\title{Outlier Robust and Sparse Estimation of Linear Regression Coefficients}
\author{Takeyuki Sasai
\thanks{Department of Statistical Science, The Graduate University for Advanced Studies, SOKENDAI, Tokyo, Japan. Email: sasai@ism.ac.jp}
\and Hironori Fujisawa
\thanks{The Institute of Statistical Mathematics, Tokyo, Japan. 
Department of Statistical Science, The Graduate University for Advanced Studies, SOKENDAI, Tokyo, Japan. 
Center for Advanced Integrated Intelligence Research, RIKEN, Tokyo, Japan. Email:fujisawa@ism.ac.jp}
}
\maketitle
\begin{abstract}
	We consider outlier-robust and sparse estimation of linear regression coefficients, when the covariates and the noises are contaminated by adversarial outliers and noises are sampled from a heavy-tailed distribution. Our results present sharper error bounds under weaker assumptions than prior studies that share similar interests with this study.
	Our analysis relies on some sharp concentration inequalities resulting from  generic chaining.
\end{abstract}

\section{Introduction}
\label{intro}
This study considers outlier-robust and sparse estimation of linear regression coefficients. Consider the following sparse linear regression model:
\begin{align}
\label{model:normal}
	y_i = \vecx_i^\top\vecbeta^*+\xi_i,\quad i=1,\cdots,n,
\end{align}
where $\vecbeta^* \in \mbb{R}^d$ represents the true coefficient vector with $s$ nonzero elements, $\left\{\vecx_i\right\}_{i=1}^n$ denotes a sequence of independent and identically distributed (i.i.d.) random covariate vectors, and $\left\{\xi_i\right\}_{i=1}^n$ denotes a sequence of i.i.d. random noises. Throughout the present paper, we assume $s\geq 1$ and $d/s\geq 3$ for simplicity.
There are many studies about estimation problems of $\vecbeta^*$  \citep{Tib1996Regression, FanLi2001Variable,ZouHas2005Regularization,YuaLin2006Model,CanTao2007Dantzig,BicRItTsy2009Simultaneous,RasWaiYu2010Restricted,Zha2010Nearly,BelCheWan2011Square,DalChe2012Fused,SivBenRav2015Beyond,SuCan2016Slope,FanLiWan2017Estimation,Der2018Improved,BelLecTsy2018Slope,LecMen2018Regularization,FanWanZhu2021Shrinkage}.
Let $\|\vecv\|_2$ denote the $\ell_2$ norm for a vector $\vecv$.
Especially, using the method in \cite{BelLecTsy2018Slope}, with probability at least $1-\delta$, we can construct an estimator $\hat{\vecbeta}$  such that
\begin{align}
\label{ine:bound}
	\|\hat{\vecbeta} -\vecbeta^*\|_2 \lesssim \sqrt{\frac{s\log (d/s)}{n}}+\sqrt{\frac{\log (1/\delta)}{n}},
\end{align}
where $\lesssim$ is the inequality up to an absolute constant factor, when, for simplicity, $\{\vecx_i\}_{i=1}^n$ and $\{\xi_i\}_{i=1}^n$ are the sequences of i.i.d. random covariate vectors sampled from the multivariate Gaussian distribution with $\mbb{E}\vecx_i = 0$ and $\mbb{E}\vecx_i \vecx_i^\top = I$, and random noises sampled from the Gaussian distribution with $\mbb{E}\xi_i = 0$ and $\mbb{E}\xi_i^2 = 1$, respectively. 

This paper considers the situation where $\left\{\vecx_i,y_i\right\}_{i=1}^n$ suffers from  malicious outliers.
We allow an adversary to inject outliers into \eqref{model:normal}, yielding
\begin{align}
\label{model:adv}
	y_i = \vecX_i^\top\vecbeta^*+\xi_i+\sqrt{n}\theta_i,\quad i=1,\cdots,n,
\end{align}
where $\vecX_i = \vecx_i+\vecvarrho_i$ for $i=1,\cdots,n$, and $\{\vecvarrho_i\}_{i=1}^n$ and $\{\theta_i\}_{i=1}^n$ are outliers.
Let $\mc{O}$ be the index set of outliers. We assume the following.
\begin{assumption}
\label{a:outlier}
	Assume that 
	\begin{itemize}
		\item [(i)] the adversary can freely choose the index set $\mc{O}$;
 		\item [(ii)] $\{\vecvarrho_i\}_{i\in \mc{O}}$ and $\{\theta_i\}_{i\in \mc{O}}$ are allowed to be correlated freely with each other and with $\{\vecx_i\}_{i=1}^n$ and $\{\xi_i\}_{i=1}^n$;
 		\item [(iii)] $\vecvarrho_i= (0,\cdots,0)^\top$ and $\theta_i=0$ for $i\in\mc{I}= \{1,2,\cdots,n\} \setminus \mc{O}$.
	\end{itemize}
\end{assumption}
\noindent We note that, under Assumption \ref{a:outlier}, $\{\vecx_i\}_{i\in\mc{I}}$ and $\{\xi_i\}_{i\in\mc{I}}$ are no longer sequences of i.i.d. random variables because $\mc{O}$ is freely chosen by an adversary. 
This type of contamination by outliers is sometimes called the strong contamination, in contrast to the Huber contamination \citep{DiaKan2019Recent}. The Huber contamination is more manageable to tame than strong contamination because outliers of Huber contamination are not correlated to the inliers and do not destroy the independence of the inliers.
We consider a problem to estimate $\vecbeta^*$ in \eqref{model:adv}, and construct a computationally tractable  estimator having a property similar to \eqref{ine:bound}.

We briefly review recent developments in robust and computationally tractable estimators.
\cite{CheGaoRen2018robust} derived  optimal error bounds for the estimation of means and covariance (scatter) matrices  under the presence of outliers and proposed estimators, which achieves the optimal error bounds. However, the estimators are computationally intractable. Subsequently, \cite{LaiRaoVem2016Agnostic} and \cite{DiaKanKanLiMoiSte2019Robust} considered tractable estimators for similar problem settings.
After \cite{LaiRaoVem2016Agnostic} and \cite{DiaKanKanLiMoiSte2019Robust}, many outlier-robust tractable estimators have been developed in \cite{DiaKanKanLiMoiSte2017Being,KotSteSte2018Robust,DiaKamKanLiMoiSte2018Robustly,CheDiaGe2019High,CheDiaGeWoo2019Faster,DonHopLi2019Quantum,PraBalRav2020Robust,DepLec2022Robust,LugMen2021Robust,DalMin2022All,BalDuLiSin2017Computationally, DiaKanKarPriSte2019Outlier,DiaKonSte2019Efficient,BakPra2021Robust,LiuSheLiCar2020High,Chi2020Erm,DiaGouTza2019Distribution,MonGoeDiaSre2020Efficiently, DiaKanMan2020Complexity,DiaKonTzaZaf2020Learning,DiaKanKonTzaZaf2021Efficiently,Tho2020Outlier,NguTra2012Robust,CheCarMan2013Robust, DalTho2019Outlier,CheAraTriJorFlaBar2020Optimal,LecLer2020Robust,GeoLecLer2020Robust,PenJogLoh2020robust,MinMohWan2022Robust,MerGai2023robust}. These studies treated  various estimating problems; e.g., estimation of mean, covariance, linear regression coefficients,  half-spaces, parameters of Gaussian mixture models. Their primary interests are deriving sharp error bounds, deriving information-theoretical optimal error bounds, and reducing computational complexity.
However, there are few studies on combining outlier-robust properties with sparsity \citep{CheCarMan2013Robust,BalDuLiSin2017Computationally,DiaKanKarPriSte2019Outlier,DalTho2019Outlier,LiuSheLiCar2020High,Chi2020Erm,Gao2020Robust,LecLer2020Robust,GeoLecLer2020Robust,Sas2022Robust,MinMohWan2022Robust,MerGai2023robust}.
Especially, \cite{CheCarMan2013Robust}, \cite{BalDuLiSin2017Computationally} and \cite{LiuSheLiCar2020High} dealt with the estimation problem of $\vecbeta^*$ from \eqref{model:adv} under the assumption of  Gaussian and subGaussian tails of $\{\vecx_i,\xi_i\}_{i=1}^n$, with computationally tractable estimators.
Our study can be considered as an extension of these prior studies from two perspectives: 
sharpening the error bound and relaxing the assumption with improved analyses.
The overview of the analyses is described in Section \ref{sec:method}.

The present paper is organized as follows. In Section \ref{sec:2}, we present our main results of the estimation error in rough statements and describe some relationships to previous studies.
In Sections \ref{se:om} and \ref{sec:un}, we describe our estimation methods and main results, without proofs.
In Section \ref{sec:keyL}, we describe the key propositions, lemma and  corollary, without proofs. 
In Section \ref{sec:pl}, we provide  proofs of a part of the propositions in Sections \ref{se:om}--\ref{sec:keyL}.
In the Appendix, we provide the proofs that are omitted  in Sections \ref{se:om}--\ref{sec:pl}. 
In the remainder of this paper, we assume that Assumption \ref{a:outlier} holds for outliers, and for simplicity, $0<\delta\leq 1/4$.

\section{Our results and relationship to previous studies}
\label{sec:2}

	\subsection{Our results}
	\label{preresults}
	In Section \ref{preresults}, we state our results.
	Before that, we introduce some definitions.
	First, we introduce the $\psi_\alpha$-norm and $\mathfrak{L} $-subGaussian random vector, which is an extension of subGaussian random variables in a high dimension.
	\begin{definition}[$\psi_\alpha$-norm]
	\label{d:orlicz}
		For a random variable $f$, let
		\begin{align}
			\|f\|_{\psi_\alpha}:=	\inf\left\{ \eta>0\,:\, \mbb{E}\exp(|f/\eta|^\alpha)\leq 2\right\} < \infty.
		\end{align}	
	\end{definition}

	\begin{definition}[$\mathfrak{L}$-subGaussian random vector]
	\label{d:l}
		A random vector $\vecx \in \mbb{R}^d$ with  mean $\mbb{E}\vecx =0$ is said to be an $\mathfrak{L} $-subGaussian random vector if for any fixed $\vecv \in \mbb{R}^d$,
		\begin{align}
		\label{ine:lsg}
			\|\langle \vecx,\vecv\rangle\|_{\psi_2}\leq \mathfrak{L} \left(\mbb{E}|\langle \vecx,\vecv\rangle|^2\right)^\frac{1}{2},
		\end{align}
		where the norm $\|\cdot\|_{\psi_2}$ is defined in Definition \ref{d:orlicz} and $\mathfrak{L}$ is a numerical constant such that $\mathfrak{L}\geq 1$.
	\end{definition}
  We note that, for example, the multivariate standard Gaussian random vector is 	an $\mathfrak{L}$-subGaussian random vector.
	Second, we introduce the restricted eigenvalue condition for $\Sigma$ \citep{BulGee2011Statistics}.
For a vector $\vecv \in \mbb{R}^d$, define $\vecv |_i$ as the $i$-th element of $\vecv$, and define the $\ell_1$ norm of $\vecv$ as $\|\vecv\|_1$.
For a vector $\vecv\in \mbb{R}^d$ and set $\mc{J}$, define $\vecv_{\mc{J}}$ as a vector such that $\vecv_{\mc{J}}|_i = \vecv|_i$ for $i \in \mc{J}$ and $\vecv|_i = 0$ for $i \notin \mc{J}$.
For a set $\mc{J}$, define $\mc{J}^c$ as a complement set of $\mc{J}$. 
Additionally, for a vector $\vecv$, define the number of non-zero elements of $\vecv$  as $\|\vecv\|_0$.	For a set $\mc{S}$, let $|\mc{S}|$ be the number of the elements of $\mc{S}$. Let $o=|\mc{O}|$.
\begin{definition}[Restricted eigenvalue condition for $\Sigma$]
	\label{RE}
	The covariance matrix $\Sigma$ is said to satisfy the restricted eigenvalue condition $\mr{RE}(s,c_{\mr{RE}},\mathfrak{r})$ with some positive constants $c_{\mr{RE}}, \mathfrak{r}$, if $\|\Sigma^\frac{1}{2}\vecv\|_2\geq \mathfrak{r}\|\vecv\|_2$ for any vector $\vecv \in \mbb{R}^p$ and any set $\mc{J}$ such that $|\mc{J}|\leq s$ and $\|\vecv_{\mc{J}^c}\|_1\leq c_{\mr{RE}}\|\vecv_{\mc{J}}\|_1$.
\end{definition}
For simplicity, we redefine $\mathfrak{r}=\inf_{\vecv \in \mbb{R}^d,\, \|\vecv\|_0\leq s,\,\|\vecv_{\mc{J}^c}\|_1\leq c_{\mr{RE}}\|\vecv_{\mc{J}}\|_1}\frac{\|\Sigma^\frac{1}{2}\vecv\|_2}{\|\vecv\|_2}$. 
Lastly, we introduce the following two quantities related to minimun/maximum eigenvalue:
\begin{align}
	\kappa_{\mr{l}} = \inf_{\|\vecv\|_0\leq s} \frac{\|\Sigma^\frac{1}{2}\vecv\|_2}{\|\vecv\|_2}, \quad \kappa_{\mr{u}} = \sup_{\|\vecv\|_0\leq 2s^2} \frac{\|\Sigma^\frac{1}{2}\vecv\|_2}{\|\vecv\|_2},
\end{align}
We note that, from the definition, we see that the minimum eigenvalue of $\Sigma$ is smaller than $\kappa_{\mr{l}}$.
	Define  $\rho = \max_{i\in \{1,\cdots,d\}}\sqrt{\Sigma_{ii}}$  and the maximum eigenvalue of $\Sigma$ as $\Sigma_{\max}$.
Then, we use the following assumption for the sequence of the covariates $\{\vecx_i\}_{i=1}^n$.
	We make the following assumption on $\{\vecx_i,\xi_i\}_{i=1}^n$:
	\begin{assumption}
	\label{a:intro}
		Assume that
		\begin{itemize}
			\item [(i)]$\left\{\vecx_i \right\}_{i=1}^n$ is a sequence of i.i.d. random vectors sampled from an $\mathfrak{L} $-subGaussian random vector with $\mbb{E}\vecx_i = 0$,  $\mbb{E}\vecx_i^\top \vecx_i = \Sigma$, $\rho\geq 1$ and  $\kappa_{\mr{l}}>0$.
			Assume that $\Sigma$ satisfies $\mr{RE}(s,c_{\mr{RE}},\mathfrak{r})$ with $c_{\mr{RE}}>1$ and $\mathfrak{r}\leq 1$;
			\item [(ii)]$\left\{\xi_i\right\}_{i=1}^n$ is a sequence of i.i.d. random variables with  $\mbb{E}\xi_i^2 \leq \sigma^2$;
			\item [(iii)] $\left\{\xi_i\right\}_{i=1}^n$ and $\left\{\vecx_i \right\}_{i=1}^n$ are independent.
		\end{itemize}
	\end{assumption}
	Define
	\begin{align}
		R_{\mr{lasso}} &= \sigma\left(\frac{\rho}{\mathfrak{r}} \sqrt{\frac{s\log(d/s)}{n}}+\sqrt{\frac{\log(1/\delta)}{n}} \right),\nonumber\\
		R_{\mr{outlier}} &=\sigma\frac{\kappa_{\mr{u}}}{\kappa_{\mr{l}}}\left(\sqrt{\frac{o}{n}}\sqrt{s \sqrt{\frac{\log(d/s)}{n}}+\sqrt{\frac{\log(1/\delta)}{n}}}+\frac{o}{n}\sqrt{\log \frac{n}{o}}\right),\nonumber\\
		R_{\mr{outlier}}'&= \sigma\sqrt{\frac{o}{n}} \times \left(\frac{\kappa_{\mr{u}}}{\kappa_{\mr{l}}}\sqrt{s \sqrt{\frac{\log(d/s)}{n}}+\sqrt{\frac{\log(1/\delta)}{n}}}+\frac{\Sigma_{\max}}{\kappa_{\mr{l}}}\right).
	\end{align}
	Our first result is as follows 	(for a precise statement, see Theorem \ref{t:main} in Section \ref{sec:results}).
	\begin{theorem}
	\label{theoreminformal1}
		Suppose that Assumption \ref{a:intro} holds, and $\Sigma$ is allowed to be used in estimation.
		Define $C_{c_{\mr{RE}}}$ is a  constant that depends on $c_{\mr{RE}}$
		Then, with probability at least $1-4\delta$, we can construct an estimator $\hat{\vecbeta}$ such that 
		\begin{align}
			\label{rec1-sigma}
				\|\Sigma^\frac{1}{2}(\hat{\vecbeta} -\vecbeta^*)\|_2 &\leq C_{c_{\mr{RE}}}\mathfrak{L}^3\left( 	R_{\mr{lasso}}+ R_{\mr{outlier}} \right),
		\end{align}
		with a computationally tractable method, when $R_{\mr{lasso}} $ and $R_{\mr{outlier}}$  are sufficiently small.
	\end{theorem}
	Our second result is as follows (for a precise statement, see Theorem \ref{t:main2} in Section \ref{sec:un}).
	\begin{theorem}
		\label{theoreminformal2}
			Suppose that Assumption \ref{a:intro} holds, and $\Sigma$ is not allowed to be used in estimation.
			Define $C_{c_{\mr{RE}}}'$ is a constant that depends on $c_{\mr{RE}}$
			Then, with probability at least $1-3\delta$, we can construct an estimator $\hat{\vecbeta}$ such that 
			\begin{align}
				\label{rec2-sigma}
					\|\Sigma^\frac{1}{2}(\hat{\vecbeta} -\vecbeta^*)\|_2 &\leq C_{c_{\mr{RE}}}'\mathfrak{L}^3\left( 	R_{\mr{lasso}}+ R'_{\mr{outlier}} \right),
			\end{align}
			with a computationally tractable method, when $R_{\mr{lasso}} $ and $R'_{\mr{outlier}}$  are sufficiently small.
		\end{theorem}
		From \eqref{rec1-sigma} and \eqref{rec2-sigma}, we see that the error bounds of our estimators match the ones of the normal lasso, up to $\mathfrak{L}$ and numerical constant factors, when there is no outliers because $R_\mr{lasso}$ is equivalent to the upper bound of \eqref{ine:bound} up to constant factors and $R_{\mr{outlier}}=0$ when there is no outliers.
		We see that $R_{\mr{outlier}}'$ is larger than $ R_{\mr{outlier}}$.
		From the fact that $R'_{\mr{outlier}}$ is larger than $R_{\mr{outlier}}$, we see that there is a deterioration in error bounds, as a trade-off for not utilizing $\Sigma$ in estimation.
	\begin{remark}
		Conditions (ii)  in Assumption \ref{a:intro} are provided to make the results simple, and condition (ii) can be weakens to a  tail probability condition. For detail, see Assumption \ref{a:noise}.
	\end{remark}
In Theorems \ref{t:main} and \ref{t:main2}, we explicitly describe the relationship between the tuning parameters used for estimation and the obtained error bounds in  Theorems \ref{theoreminformal1} and \ref{theoreminformal2}, respectively
Additionally, in Theorems \ref{t:main} and \ref{t:main2}, we derive error bounds not only about $\|\Sigma^\frac{1}{2}(\cdot)\|_2$ but also $\|\cdot\|_2$ and $\|\cdot\|_1$.

	\subsection{Relationship to previous studies}
	\label{sec:rel}
	As we mentioned in Section \ref{intro},  \cite{CheCarMan2013Robust,BalDuLiSin2017Computationally,LiuSheLiCar2020High} dealt with the estimation problem of $\vecbeta^*$ from \eqref{model:adv} under Assumption \ref{a:outlier} with computationally tractable estimators.

	We note that \cite{LecLer2020Robust,GeoLecLer2020Robust,Gao2020Robust} dealt with the estimation problem of $\vecbeta^*$ from \eqref{model:adv} under a stronger assumption about outlier than Assumption \ref{a:outlier}, with computationally  intractable estimators deriving sharp error bounds. Additionally, we note that \cite{Sas2022Robust} dealt with a situation  where $\{\vecx_i\}_{i=1}^n$  is a sequence of i.i.d. random vectors sampled from a heavy-tailed distribution, and \cite{MerGai2023robust} dealt with more challenging situation weakening the assumptions for covariates than that of \cite{Sas2022Robust}.
	Their error bounds are looser than the results of the present paper because the weak assumptions restrict the techniques available. Therefore, these papers \citep{LecLer2020Robust,GeoLecLer2020Robust,Gao2020Robust,Sas2022Robust,MerGai2023robust} treat computationally intractable estimators or suppose more weaker assumptions than our method, and hence we do not mention such papers further because the interests of such papers are different from that of our paper.

	Therefore, we mainly discuss the results of \cite{CheCarMan2013Robust,BalDuLiSin2017Computationally,LiuSheLiCar2020High} in the remainder of Section \ref{sec:rel}.

		\subsubsection{Case where $\Sigma$ is  allowed to be used in estimation}
		The result of  \cite{BalDuLiSin2017Computationally} and part of the result of \cite{LiuSheLiCar2020High} use $\Sigma$ in their estimation.
		\cite{BalDuLiSin2017Computationally} and \cite{LiuSheLiCar2020High} considered situations where the covariate vectors are sampled from the standard
		multivariate Gaussian distribution and the noises are sampled from a Gaussian distribution with mean $0$ and variance $\sigma^2$.
		In contrast, our method works well for the case where the covariate vectors are sampled from an $\mathfrak{L}$-subGaussian random variable with covariance which satisfies the restricted eigenvalue condition and the noises are sampled from a heavy-tailed distribution.  
		\cite{BalDuLiSin2017Computationally} and \cite{LiuSheLiCar2020High} only considered the case where $o/n$ is a sufficiently small constant. Let $o/n = \mathfrak{e}$. The $\ell_2$-norm error bound  of \cite{BalDuLiSin2017Computationally}  is $\lesssim_{\sigma} (\sqrt{1+\|\vecbeta^*\|_2^2} \frac{o}{n} \log^2 \frac{n}{o})$ when $\frac{s^2}{\mathfrak{e}^2} \log d +\frac{s^2}{\mathfrak{e}^2} \log(1/\delta)\lesssim_{\sigma} n$, where $\lesssim_{\sigma}$ is the inequality up to an absolute constant factor and the standard deviation of the random noise $\sigma$.
		The $\ell_2$-norm error bound of  \cite{LiuSheLiCar2020High} is $\lesssim \sigma   ( \frac{o}{n} \log \frac{n}{o})$ when $\left(\frac{s^2}{\mathfrak{e}^2} \log (dT) +\frac{s^2}{\mathfrak{e}^2} \log (1/\delta)  \right)\times T \lesssim n$, with $ \log\left(\frac{\|\vecbeta^*\|_2}{\mathfrak{e}\sigma}\right) \lesssim  T$. Under the same situation, the $\ell_2$-norm error bound of our result becomes $\lesssim \sigma \frac{o}{n}\sqrt{ \log \frac{n}{o}}$ when $ \frac{s^2}{\mathfrak{e}^2  } \log(d/s)+ \frac{1}{\mathfrak{e}^2}\log(1/\delta)\lesssim n$.
		We see that  our result does not depend on $\vecbeta^*$ and the error bound is sharper than the ones of \cite{BalDuLiSin2017Computationally} and \cite{LiuSheLiCar2020High}. Additionally, our sample complexity is smaller  than the ones of \cite{BalDuLiSin2017Computationally} and \cite{LiuSheLiCar2020High} because, in our sample complexity, the term such that $\frac{s^2}{\mathfrak{e}} \times \log(1/\delta)$ does not appear.
	
		We consider the optimality of the error bound in \eqref{rec1-sigma}. From Theorem D.3 of \cite{CheAraTriJorFlaBar2020Optimal}, we see that the error bound can not avoid a term such that $\mathit{constant} \times \sigma \frac{o}{n}\sqrt{\log\frac{n}{o}}$ even when $d=1$. Detailed investigation of the influence of $\Sigma$ in high dimension on information theoretical estimation limit is a task for future research.

		When there is no outlier ($o=0$),  our error bound coincides with the one of the normal lasso, up to numerical and $\mathfrak{L}$ factors.
		The results in \cite{BalDuLiSin2017Computationally} and \cite{LiuSheLiCar2020High} do not have this property.

		\subsubsection{Case where $\Sigma$ is not allowed to be used in estimation}
		\label{sec:u}
		\cite{CheCarMan2013Robust}, and a part of the result of \cite{LiuSheLiCar2020High} gives error bounds with tractable methods and do not require $\Sigma$ in estimation.
		However, the method in \cite{LiuSheLiCar2020High} assumes a sparse structure of $\Sigma$, and the sample complexity depends on not only $s^2$ but also the sparsity of $\Sigma$. \cite{CheCarMan2013Robust} propose some methods, however, the term in their error bounds containing $o$ depends on $\log d$ and $s$ even when $s^2\log(d/s) \lesssim n$.

		When there is no outlier ($o=0$), similarly to the case where $\Sigma$ is  allowed to be used in estimation, our error bound coincides with the one of the normal lasso, up to numerical and $\mathfrak{L}$ factor.
		The results in \cite{LiuSheLiCar2020High} and \cite{CheCarMan2013Robust} do not have this property.

		\subsubsection{Remaining problem}
		Our estimator and estimators in \cite{BalDuLiSin2017Computationally} and \cite{LiuSheLiCar2020High} require $n$ to be proportional to $s^2$, which is not needed to derive \eqref{ine:bound} from \ref{model:normal}.
		Similar phenomena can be observed in \cite{WanBerSam2016Statistical,FanWanZhu2021Shrinkage,LiuSheLiCar2020High,BalDuLiSin2017Computationally, DiaKanKarPriSte2019Outlier}.
    Some relationships between computational tractability and similar quadratic dependencies are unraveled  \citep{WanBerSam2016Statistical,DiaKonSte2019Efficient,DiaKanSte2017Statistical}.
		We leave the analysis in our situation for future work.
		We note that \cite{NguTra2012Robust, DalTho2019Outlier,Chi2020Erm,Tho2020Outlier,MinMohWan2022Robust} considered a simpler case in 	which only noise is contaminated by outliers ($\vecvarrho_i=0$ for any $i \in \{1,\cdots,d\}$), with computationally tractable estimators, and  the error bounds and sample complexities depend on $s$, not $s^2$.
		In actual applications, it is necessary to consider an appropriate method after carefully clarifying the nature of  data.

\section{Method and result}
\label{se:om}
Assume that $\vecx$ is a random vector drawn from the same distribution of $\{\vecx_i\}_{i=1}^n$.
Hereafter, we often use the following simply notations to express error orders: 
\begin{align}
	r_{d,s} = \sqrt{\frac{\log (d/s)}{n}},\quad r_\delta = \sqrt{\frac{\log (1/\delta)}{n}}, \quad r_o=\frac{o}{n}\sqrt{\log \frac{n}{o}},\quad r_o' = \frac{o}{n}\log \frac{n}{o}.
\end{align}

	\subsection{Some properties of $\mathfrak{L}$-subGaussian random vector}
	\label{sec:Lsgprop}
	We show some additional properties of $\mathfrak{L}$-subGaussian random vector $\vecx$.	We note that, from \eqref{ine:lsg}, we have 
	\begin{align}
	\label{ine:lsgtmp}
		\|\langle \vecx,\vecv\rangle\|_{\psi_2}\leq \mathfrak{L} \left(\mbb{E}|\langle \vecx,\vecv\rangle|^2\right)^\frac{1}{2}\leq \mathfrak{L} \|\Sigma^\frac{1}{2}\vecv\|_2,
	\end{align}
	 and from (2.14) - (2.16) of \cite{Ver2018High}, for any $\vecv \in \mbb{R}^d$ and $t\geq 0$, we have 
	\begin{align}
	\label{cl}
		\|\vecv^\top \vecx\|_{L_p} &\left[:= \left\{\mbb{E}|\vecv^\top \vecx|^p \right\}^\frac{1}{p}\right]\leq c_{\mathfrak{L}} \sqrt{p}\|\vecv^\top \vecx\|_{\psi_2}\leq c_{\mathfrak{L}} \sqrt{p}\mathfrak{L} \|\Sigma^\frac{1}{2}\vecv\|_2,\\
	\label{ine:lsg2}
		\mbb{E}\exp(\vecv^\top \vecx)& \leq \exp(c_{\mathfrak{L}}^2  \mathfrak{L}^2 \|\Sigma^\frac{1}{2}\vecv\|_2^2 ),\\
	\label{ine:lsg2-2}
		\mbb{E}\exp\left(\frac{(\vecv^\top \vecx)^2}{c_{\mathfrak{L}}^2\mathfrak{L}^2\|\Sigma^\frac{1}{2}\vecv\|_2^2}\right)& \leq 2,\\
	\label{ine:lsg3}
		\mbb{P}\left(|\vecv^\top \vecx|>t\right) &\leq 2 \exp \left(-\frac{t^2}{c_{\mathfrak{L}}^2\mathfrak{L}^2 \|\Sigma^\frac{1}{2}\vecv\|_2^2}\right),
	\end{align}
	where $c_{\mathfrak{L}}$ is a numerical constant. Define $L = \mathfrak{L}\times \max\{1,c_{\mathfrak{L}}\} $.

	\subsection{Method}
 \label{sec:method}
	To estimate $\vecbeta^*$ in \eqref{model:adv}, we propose the outlier-robust and sparse estimation (Algorithm \ref{ourmethod}).
	Algorithm \ref{ourmethod} is similar to the ones used in  \cite{PenJogLoh2020robust} and \cite{Sas2022Robust}.
	However, \cite{PenJogLoh2020robust} considered non-sparse case, and \cite{Sas2022Robust} considered heavy-tailed covariates.
	Therefore, to consider the sparsity of $\vecbeta^*$ or to derive a sharper error bound than that in \cite{Sas2022Robust} taking advantage $\mathfrak{L}$-subGaussian assumption, our analysis is more involved than \cite{PenJogLoh2020robust} and \cite{Sas2022Robust}.
	Concretely, unlike the previous studies,  extensions of Hanson--Wright inequalities, which appear later in Proposition \ref{p:cwpre} and Corollary \ref{c:cwpre} proved via generic chaining, play  important roles. 
	We will analyze an $\ell_1$-penalized Huber loss in step 3 in Algorithm \ref{ourmethod}.
	Our analysis of the $\ell_1$-penalized Huber loss is similar to the ones in \cite{AlqCotLec2019Estimation} and \cite{LecMen2018Regularization}, however, the analyses of \cite{AlqCotLec2019Estimation} and \cite{LecMen2018Regularization} are mainly interested in the case $\mbb{E}\vecx \vecx^\top = I$, and not very effective for a more general covariance. We modify their analysis and our analysis is effective for a more general covariance. In particular, Proposition \ref{p:main} is important and a modified analysis method is described in Appendices \ref{sec:mainproposition-pre} and \ref{sec:mainproof}.

	\begin{algorithm}[h]
		\caption{Outlier-robust and sparse estimation}
		\begin{algorithmic}[1]
		\label{ourmethod}
			\renewcommand{\algorithmicrequire}{\textbf{Input:}}
			\renewcommand{\algorithmicensure}{\textbf{Output:}}
			\REQUIRE $\left\{y_i,\vecX_i\right\}_{i=1}^n$, $\Sigma(=\mbb{E}\vecx \vecx^\top)$ and  tuning parameters $\tau_{\rm cut},\varepsilon,r_1,r_2,\lambda_o,\lambda_s$
			\ENSURE $\hat{\vecbeta}$
			\STATE $\left\{ \hat{w}_i\right\}_{i=1}^n \leftarrow \text{WEIGHT}(\{\vecX_i\}_{i=1}^n ,\tau_{\rm cut},\varepsilon,r_1,r_2,\Sigma)$
			\STATE $\{ \hat{w}_i'\}_{i=1}^n \leftarrow \text{TRUNCATION}(\left\{ \hat{w}_i\right\}_{i=1}^n )$
			\STATE $\hat{\vecbeta} \leftarrow \text{WEIGHTED-PENALIZED-HUBER-REGRESSION}\left(\{y_i,\vecX_i\}_{i=1}^n, \,\{\hat{w}'_i\}_{i=1}^n,\,\lambda_o,\,\lambda_s\right)$
		\end{algorithmic} 
	\end{algorithm}
	Here we give simple explanations of the output steps. 
	The details are provided in Sections \ref{sec:cw}, \ref{sec:t}, and \ref{sec:whr}.
	The first step produces the weights $\{\hat{w}_i\}_{i=1}^n$ reducing adverse effects of covariate outliers. 
	The second step is the truncation of the provided weights to zero or $1/n$, say $\{\hat{w}_i'\}_{i=1}^n$. The third step is the $\ell_1$-penalized Huber regression based on the weighted errors using the truncated weights $\{\hat{w}_i'\}_{i=1}^n$. The $\ell_1$-penalization addresses the high dimensional setting, and the Huber regression weakens the adverse effects of the response outliers. 

		\subsubsection{WEIGHT}
		\label{sec:cw}
		For a matrix $M= (m_{ij})_{1\leq i\leq d_1,1\leq j\leq d_2} \in \mbb{R}^{d_1}  \times \mbb{R}^{d_2}$, we define
		\begin{align}
			\|M\|_1 = \sum_{i=1}^{d_1}\sum_{j=1}^{d_2}|m_{ij}|.
		\end{align}
		For a symmetric matrix $M$, we write $M\succeq 0$ if $M$ is positive semi-definite.
		Define $\mr{Tr}(M)$ for a squared matrix $M$ as the trace of $M$. Define the following convex set:
		\begin{align}
			\mathfrak{M}_{r_1,r_2,d}^{\ell_1,\mr{Tr}} &= \left\{M\in {\cal S}(d)\,\mid \,  \|M\|_1 \leq r_1^2,\,\mr{Tr}(M) \leq r^2_2,\,M\succeq 0\right\},
		\end{align}
 		where ${\cal S}(d)$ is a set of symmetric matrices on $\mbb{R}^{d} \times \mbb{R}^d$. 
		For a vector $\vecv$, we define the $\ell_\infty$ norm of $\vecv$ as $\|\vecv\|_\infty$ and 
		define the probability simplex $\Delta^{n-1}(\varepsilon)$ with $0<\varepsilon<1$ as follows:
		\begin{align}
			\Delta^{n-1}(\varepsilon) = \left\{\vecw \in [0,1]^n \,\mid \,  \sum_{i=1}^nw_i =1, \quad \|\vecw\|_\infty\leq \frac{1}{n(1-\varepsilon)}\right\}.
		\end{align}
		The first step of Algorithm \ref{ourmethod}, WEIGHT, is stated as follows.
		\begin{algorithm}[H]
		\caption{WEIGHT}
		\label{alg:cw0}
			\begin{algorithmic}
				\REQUIRE{data $\{\vecX_i \}_{i=1}^n$, tuning parameters $\tau_{\rm cut}, \varepsilon,r_1,\,r_2$.}
				\ENSURE{weight estimate $\hat{\vecw} = \{\hat{w}_1,\cdots,\hat{w}_n\}$.}\\
				Let $\hat{\vecw}$ be the solution to 
				\begin{align}
				\label{cw}
					\min_{\vecw \in \Delta^{n-1}(\varepsilon)} \max_{M\in \mathfrak{M}_{r_1,r_2,d}^{\ell_1,\mr{Tr}} }\sum_{i=1}^n w_i \langle\vecX_i \vecX_i^\top-\Sigma,M\rangle
				\end{align}
				{\bf if} {the optimal value of \eqref{cw} $\leq \tau_{\rm cut} $}\\
				\ \ \ \ \ {\bf return} {$\hat{\vecw}$}\\
				{\bf else} \\
				\ \ \ \ \ {\bf return} {$fail$}\\
			\end{algorithmic}
		\end{algorithm}

		Algorithm \ref{alg:cw0} is a special case of Algorithm 3 of \cite{BalDuLiSin2017Computationally}. Therefore, as in Algorithm 3 of \cite{BalDuLiSin2017Computationally}, Algorithm \ref{alg:cw0} can  also be computed efficiently.
		An intuitive meaning of \eqref{cw} is given in Section \ref{sec:int}. 
		For the detail of the value of $\tau_{\mr{cut}}$ and its validity, see Theorem \ref{t:main} and Proposition \ref{p:cwpre}, respectively.

		\subsubsection{TRUNCATION}
		\label{sec:t}
		The second step in Algorithm \ref{ourmethod} is the discretized truncation of $\left\{ \hat{w}_i\right\}_{i=1}^n$, say $\{\hat{w}_i'\}_{i=1}^n$, as in Algorithm \ref{alg:t}. The discretized truncation, which makes it easy to analyze the estimator. 
		\begin{algorithm}[H]
		\caption{TRUNCATION}
		\label{alg:t}
			\begin{algorithmic}
				\REQUIRE{weight vector $\hat{\vecw} = \{\hat{w}_i\}_{i=1}^n$.}
				\ENSURE{truncated weight vector $\hat{\vecw}'= \{\hat{w}'_i\}_{i=1}^n$.}\\
				{\bf For} {$i=1:n$}\\
				\ \ \ \ \ {\bf if} $\hat{w}_i \geq \frac{1}{2n}$\\
				\ \ \ \ \ \ \ \ \ \ $\hat{w}'_i = \frac{1}{n}$\\
				\ \ \ \ \ {\bf else}\\
				\ \ \ \ \ \ \ \ \ \ $\hat{w}'_i = 0$\\
	 			{\bf return}	$\hat{\vecw}'$. 
		\end{algorithmic}
		\end{algorithm}

		\subsubsection{WEIGHTED-PENALIZED-HUBER-REGRESSION}
		\label{sec:whr}
		The Huber loss function $H(t)$ is defined as follows:
		\begin{align}
			H(t) = \begin{cases}
			|t| -1/2 & (|t| > 1) \\
			t^2/2 & (|t| \leq 1)
			\end{cases},
		\end{align}
		and let
		\begin{align}
			h(t) =	\frac{d}{dt} H(t) = \begin{cases}
			\mr{sgn}(t)\quad &(|t| >1)\\
			t\quad &(|t| \leq 1)
			\end{cases}.
		\end{align}
		We consider the $\ell_1$-penalized Huber regression with the weighted samples $\{\hat{w}_i'y_i,\hat{w}_i'X_i\}_{i=1}^n$ in Algorithm \ref{alg:WH}. This is the third step in Algorithm \ref{ourmethod}. 
		\begin{algorithm}[H]
		\caption{WEIGHTED-PENALIZED-HUBER-REGRESSION}
		\label{alg:WH}
			\begin{algorithmic}
				\REQUIRE{data $\left\{y_i,\vecX_i\right\}_{i=1}^n$, truncated weight vector $\hat{\vecw}' = \{\hat{w}'_i\}_{i=1}^n$ and tuning parameters $\lambda_o, \lambda_s$.}
				\ENSURE{estimator $\hat{\vecbeta}$.}\\
				\begin{align}
					\label{hl}
					\hat{\vecbeta}=\argmin_{\vecbeta \in \mbb{R}^d} \sum_{i=1}^n \lambda_o^2 H\left(n\hat{w}_i'\frac{y_i-\vecX_i^\top\vecbeta}{\lambda_o\sqrt{n}}\right)+\lambda_s\|\vecbeta\|_1,
				\end{align}
	 			{\bf return}	$\hat{\vecbeta}$. 
			\end{algorithmic}
		\end{algorithm}
		Several studies, such as \cite{NguTra2012Robust,SheOwe2011Outlier,DalTho2019Outlier,SunZhoFan2020Adaptive,CheZho2020Robust,Chi2020Erm, PenJogLoh2020robust,Sas2022Robust}, have suggested that the Huber loss is effective for linear regression under heavy-tailed noise or the existence of outliers. 

		Lastly, we introduce the assumption on $\{\xi_i\}_{i=1}^n$:
		\begin{assumption}
		\label{a:noise}
			\begin{itemize}
				\item[(i)] $\{\xi_i\}_{i=1}^n$ is a sequence of i.i.d. random variables such that 
					\begin{align}
					\label{ine:lambda}
						\mbb{P}\left(\frac{\xi_i}{\lambda_o\sqrt{n}} \geq \frac{1}{2}\right) \leq \frac{1}{144 L^4 };
					\end{align}
					\item[(ii)] $\mbb{E}h\left(\frac{\xi_i}{\lambda_o\sqrt{n}}\right) \times \vecx_i=0$.
				\end{itemize}
		\end{assumption}
		\begin{remark}
		\label{r:lambda}
			For example, when $\mbb{E}\xi_i^2 \leq \sigma^2$, from Markov's inequality, we have
			\begin{align}
					\mbb{P}\left(\frac{\xi_i}{\lambda_o\sqrt{n}} \geq \frac{1}{2}\right) \leq \frac{4}{\lambda_o^2 n} \mbb{E}\xi_i^2 \leq \frac{4\sigma^2}{\lambda_o^2 n}.
			\end{align}
			Therefore, to  satisfy \eqref{ine:lambda}, it is sufficient to  set 
			\begin{align}
				\label{ine:lambdacond}
					24  L^2 \sigma\leq \lambda_o \sqrt{n}.
			\end{align}
			In this case, Condition (i) in Assumption \ref{a:noise} is weaker than  $\mbb{E}\xi_i^2\leq \sigma^2$. 
		\end{remark}
		\begin{remark}
			Condition (ii) in Assumption \ref{a:noise} is weaker than the independence between $\{\xi_i\}_{i=1}^n$ and $\{\vecx_i\}_{i=1}^n$.
		\end{remark}

		\subsubsection{An intuitive meaning of Algorithm \ref{alg:cw0}}
		\label{sec:int}
		For $l = 0, 1,2$, we define $d$-dimensional $\ell_l$-ball  with radius $a$ as 
		$a\mbb{B}^d_l = \{\vecv \in \mbb{R}^d \,\mid\, \|\vecv\|_l\leq a \}$, and for $l = \Sigma$, we define $a\mbb{B}^d_\Sigma = \{\vecv \in \mbb{R}^d \,\mid\, \|\Sigma^\frac{1}{2}\vecv\|_2\leq a \}$.
		We explain an intuitive meaning of WEIGHT.
		\cite{Tho2020Outlier} and \cite{MinMohWan2022Robust} considered the case where covariates are not contaminated by outliers, namely $\varrho_i = 0$ for all $i \in  \{1,\cdots ,n\}$. Proposition 4 of \cite{Tho2020Outlier} or Theorem 2.4 of \cite{MinMohWan2022Robust} played an important role in theoretical analysis.
			Proposition 4 of \cite{Tho2020Outlier} or Theorem 2.4 of \cite{MinMohWan2022Robust} implies that, for any $\vecv \in r_1 \mbb{B}^d_1 \cap r_\Sigma \mbb{B}^d_\Sigma$ with appropriate values of $r_1,\,r_\Sigma$, and for any $(u_1,\cdots,u_n)\in \mbb{R}^n$ such that $u_i= 0$ for $i \in \mc{I}$ and $\|\vecu\|_\infty \leq 2$,  we have
		\begin{align}
		\label{ine:tho}
			\mbb{P}\left\{\sum_{i=1}^n\vecv^\top \vecx_i u_i\lesssim \rho r_{d,s}r_1+(\sqrt{s}r_{d,s}+r_\delta+r_o)r_\Sigma\right\}\geq 1-\delta.
		\end{align}
		We expect a property similar to \eqref{ine:tho} even when the covariates are contaminated by outliers, more precisely, when $\{\vecx_i\}_{i=1}^n$ is replaced by $\{\vecX_i\}_{i=1}^n$. In this case, the result \eqref{ine:tho} does not hold as it is, but we can have a similar property for a weighted type 
		$\sum_{i=1}^n\vecv^\top w_i\vecX_i u_i$  with a devised weight defined in Algorithm \ref{alg:cw0}. To obtain a property similar to \eqref{ine:tho}, $\sum_{i=1}^n\vecv^\top w_i\vecX_i u_i$  must be sufficiently small. Suppose $ \vecw \in \Delta^{n-1}(\varepsilon)$. 
		We see that, for any $\vecv \in r_1 \mbb{B}^d_1 \cap r_2 \mbb{B}^d_2$, from H{\"o}lder's inequality, 
		\begin{align}
		\label{ine:th3}
			\left(\sum_{i=1}^n\vecv^\top w_i \vecX_i u_i\right)^2&\leq \sum_{i\in \mc{O}} w_i u_i^2 \sum_{i \in \mc{O}} w_i (\vecX_i^\top \vecv)^2\nonumber\\
			&\leq \frac{4}{1-\varepsilon}\frac{o}{n}\sum_{i \in \mc{O}} w_i \langle \vecX_i \vecX_i^\top, \vecv \vecv^\top \rangle\nonumber \\
			&\leq  \frac{4}{1-\varepsilon}\left(\frac{o}{n}\sum_{i \in \mc{O}} w_i \langle \vecX_i \vecX_i^\top-\Sigma, \vecv \vecv^\top \rangle+\frac{o}{n}\sum_{i \in \mc{O}} w_i \langle \Sigma, \vecv \vecv^\top \rangle\right)\nonumber\\
			&\stackrel{(a)}{\leq}  \frac{4}{1-\varepsilon}\left(\frac{o}{n}\sum_{i \in \mc{O}} w_i \langle \vecX_i \vecX_i^\top-\Sigma, \vecv \vecv^\top \rangle+\frac{o^2}{n^2(1-\varepsilon)} \langle \Sigma, \vecv \vecv^\top \rangle\right),
		\end{align}
		where (a) follows from $w_i\leq 1/(n(1-\varepsilon))$.
		Evaluation of   $\sum_{i \in \mc{O}} w_i \langle \vecX_i \vecX_i^\top-\Sigma, \vecv \vecv^\top \rangle$ is difficult because it contains outlier.
		However, we see that it is sufficient to evaluate
		\begin{align}
		\label{ine:th4}
			\sum_{i =1}^n w_i \langle \vecX_i \vecX_i^\top-\Sigma, \vecv \vecv^\top \rangle
		\end{align}
		in the proof of our results.
		Therefore, to make $\sum_{i=1}^n \vecv^\top  w_i \vecX_i u_i$ sufficiently small, we want to minimize \eqref{ine:th4} in $\vecw \in \Delta^{n-1}(\varepsilon)$ for any $\vecv \in 	r_1 \mbb{B}^d_1 \cap r_2 \mbb{B}^d_2$, in other words, we want to consider
		\begin{align}
		\label{cw-norelax}
			\min_{\vecw \in \Delta^{n-1}(\varepsilon)} \max_{\vecv \in 	r_1 \mbb{B}^d_1 \cap r_2 \mbb{B}^d_2}\sum_{i=1}^n w_i \langle\vecX_i \vecX_i^\top-\Sigma,\vecv \vecv^\top \rangle.
		\end{align}
		A convex relaxation of \eqref{cw-norelax} is \eqref{cw}, which is an essential part in Algorithm 2. We can see in Proposition \ref{p:cwpre} that the optimization \eqref{cw} is enough to have a property similar to \eqref{ine:tho} for a weighted type $\sum_{i=1}^n\vecv^\top w_i\vecX_i u_i$. 

		\subsubsection{Approaches of the  the previous studies}
		In this section, we explain the approaches of the previous studies \cite{BalDuLiSin2017Computationally} and \cite{LiuSheLiCar2020High}.
		To estimate $\vecbeta^*$ in \eqref{model:adv},
		\cite{BalDuLiSin2017Computationally} used sparse and outlier-robust mean estimation on 
		$y_i \vecX_i$, directly. 
		\cite{BalDuLiSin2017Computationally} assumes that $\{\vecx_i\}_{i=1}^n$ and $\{\xi_i\}_{i=1}^n$ are the sequences  of i.i.d. random vectors  sampled from  the multivariate standard Gaussian distribution and random variables sampled from the standard Gaussian, respectively, and $\{\vecx_i\}_{i=1}^n$ and $\{\xi_i\}_{i=1}^n$ are independent. Then, we have
		\begin{align}
			\mbb{E}y_i\vecx_i = \vecbeta^*,\quad \mbb{V} (y_i \vecx_i) (y_i \vecx_i)^\top =(\|\vecbeta^*\|_2^2+1)I + \vecbeta^* \vecbeta^{* \top},
		\end{align}
		where we use Isserlis' theorem, which is a formula for the multivariate Gaussian distribution.
		From Remark 2.2 of \cite{CheGaoRen2018robust}, we see that the error bound of any outlier-robust  estimator for mean vector can not avoid the effect of the operator norm of the covariance $\mbb{V} (y_i \vecx_i) (y_i \vecx_i)^\top$. Consequently, the approach of \cite{BalDuLiSin2017Computationally} can not remove $\|\vecbeta^*\|_2$ from their error bound.
		Additionally, we note that \cite{LiuSheLiCar2020High} proposed a gradient method  that repeatedly updates the estimator of $\vecbeta^*$.
		Define the $t$-step of the update of the estimator of $\vecbeta^*$ as $\vecbeta^t$. The gradient for the next update is 
		based on the result of an outlier-robust estimation of $\vecX_i(\vecX_i^\top \vecbeta^t-y_i)$. Therefore, for reason similar to the one of  \cite{BalDuLiSin2017Computationally}, the sample complexity of \cite{LiuSheLiCar2020High}
		is affected by $\|\vecbeta^*\|_2$.
		On the other hand, our estimator is based on the $\ell_1$-penalized Huber regression, and  we can avoid this problem.

	\subsection{Result}	
	\label{sec:results}
	We state our main theorem.
	Define
	\begin{align}
		R_{d,n,o} = \rho c_{r_1}\sqrt{s} r_{d,s}+r_\delta +c_{r_2}\kappa_{\mr{u}}\left(\sqrt{\frac{o}{n}(sr_{d,s}+r_\delta)}+r_o\right).
	\end{align}
	\begin{theorem}
	\label{t:main}
		Suppose that (i) and (iii) of Assumption \ref{a:intro} and Assumption \ref{a:noise} hold. 
		Suppose that the parameters $\lambda_o,\lambda_s,\varepsilon,\tau_{\rm cut},r_1,r_2, r_\Sigma$ satisfy 
				\begin{align}
		\label{ine:lambda_o1}
			1&\geq 7c_ o \left(4 + c_s  \right) c_{\max}^2  \sqrt{1+\log L}L^2R_{d,n,o},\\
		\label{ine:lambda_s1}
			\lambda_s&= c_s c_{\max}^2L\lambda_o\sqrt{n}\frac{1}{c_{r_1}\sqrt{s}}R_{d,n,o},\quad
		 	\varepsilon = c_\varepsilon \frac{o}{n},\quad
			\tau_{\rm cut}=c_{\rm cut}(L\kappa_{\mr{u}})^2\left(sr_{d,s}+r_\delta +r_o'\right)r_2^2,\\
		\label{ine:r121}
		 	r_1&= c_{r_1}\sqrt{s}r_\Sigma ,\quad r_2 =  c_{r_2}r_\Sigma, \quad
			r_\Sigma =  7\left(4 + c_s  \right)c_{\max}^2 L\lambda_o\sqrt{n}R_{d,n,o},
		\end{align}
	 	where $c_o,c_s,c_\varepsilon, c_{\rm cut}, c_{r_1},c_{r_2}$, and $c_{\max}$ are sufficiently large numerical constants such that $c_o\geq 4$, $c_s \geq 3(c_{\mr{RE}}+1)/(c_{\mr{RE}}-1)$, $2> c_\varepsilon\geq 1$, $c_{\rm cut}\geq c_2$, $c_{r_1} = c_{r_1}^{\mr{num}}(1+c_{\mr{RE}})/\mathfrak{r}$, $c_{r_2} = c_{r_2}^{\mr{num}}(3+c_{\mr{RE}})/\kappa_{\mr{l}},\,\min\{c_{r_1}^{\mr{num}},\,c_{r_2}^{\mr{num}}\}\geq 2$ and $c_{r_1}^{\mr{num}}/c_{r_2}^{\mr{num}}\leq 1$. In  Proposition \ref{p:cwpre} and  Definition \ref{d:max}, $c_2$ and  $c_{\max}$  are defined, respectively.
		Suppose that $\max\{\sqrt{2}r_\delta, sr_{d,s}\}\leq 1$ and  $0<o/n\leq 1/(5e)$ hold.
	 	Then, the optimal solution $\hat{\vecbeta}$ satisfies the following:
		\begin{align}
		\label{ine:result1}
		\|\Sigma^\frac{1}{2}(\hat{\vecbeta} -\vecbeta^*)\|_2 & \leq r_\Sigma, \quad \|\hat{\vecbeta} -\vecbeta^*\|_2  \leq r_2\text{ and } \|\hat{\vecbeta} -\vecbeta^*\|_1 \leq r_1,
		\end{align}
		with probability at least $1-4\delta$.
	\end{theorem}
	We note that the conditions \eqref{ine:lambda_o1} and \eqref{ine:r121} in Theorem \ref{t:main}
	imply
	\begin{align}
		\label{ine:nsufficient}
		\lambda_o \sqrt{n} &\geq c_oL r_\Sigma\sqrt{1+\log L}.
	\end{align}
	\begin{remark} 
	\label{rem:result1} 
		We consider the results of \eqref{ine:result1} in details.
		Assume $\mbb{E}\xi_i^2\leq \sigma^2$ and the equality of \eqref{ine:lambdacond} hold.
		Define $C_{c_{\mr{RE}},1},\,C_{c_{\mr{RE}},2}$ and $C_{c_{\mr{RE}},3}$ are constants depending on $c_{\mr{RE}}$.
		Then, we have
		\begin{align}
			\label{ine:result1-rem}
			\|\Sigma^\frac{1}{2}(\hat{\vecbeta} -\vecbeta^*)\|_2 & \leq C_{c_{\mr{RE}},1}\mathfrak{L}^3(R_{\mr{lasso}}+R_{\mr{outlier}}), \\
			 \|\hat{\vecbeta} -\vecbeta^*\|_2  &\leq C_{c_{\mr{RE}},2}\mathfrak{L}^3\frac{1}{\kappa_{\mr{l}}}(R_{\mr{lasso}}+R_{\mr{outlier}}),\\
			\|\hat{\vecbeta} -\vecbeta^*\|_1 &\leq C_{c_{\mr{RE}},3}\mathfrak{L}^3\frac{1}{\mathfrak{r}}\sqrt{s}(R_{\mr{lasso}}+R_{\mr{outlier}}),
			\end{align}
			and we see that \eqref{ine:result1-rem} recovers \eqref{rec1-sigma}.
	\end{remark}

\section{Estimator without the covariance}
\label{sec:un}
In Theorem \ref{t:main}, we use the covariance of the covariates when we construct an estimator.
On the other hand, especially in practical terms, the use of the covariance would be unfavorable.
When we do not use the covariance in estimation, we need to modify algorithm WEIGHT as follows:
\begin{algorithm}[H]
	\caption{WEIGHT WITHOUT COVARIANCE}
	\label{alg:cw0-un}
		\begin{algorithmic}
			\REQUIRE{data $\{\vecX_i \}_{i=1}^n$, tuning parameters $\tau_{\rm cut}, \varepsilon,r_1,\,r_2$.}
			\ENSURE{weight estimate $\hat{\vecw} = \{\hat{w}_1,\cdots,\hat{w}_n\}$.}\\
			Let $\hat{\vecw}$ be the solution to 
			\begin{align}
			\label{cw-un}
				\min_{\vecw \in \Delta^{n-1}(\varepsilon)} \max_{M\in \mathfrak{M}_{r_1,r_2,d}^{\ell_1,\mr{Tr}} }\sum_{i=1}^n w_i \langle\vecX_i \vecX_i^\top,M\rangle
			\end{align}
			{\bf if} {the optimal value of \eqref{cw} $\leq \tau_{\rm cut} $}\\
			\ \ \ \ \ {\bf return} {$\hat{\vecw}$}\\
			{\bf else} \\
			\ \ \ \ \ {\bf return} {$fail$}\\
		\end{algorithmic}
	\end{algorithm}
In WEIGHT WITHOUT COVARIANCE, it is necessary to set the value of $\tau_{\mr{cut}}$ larger than that in WEIGHT. For detail, see Corollary \ref{c:cwpre}.
Then, Theorem \ref{t:main} is changed as follows.
Define
\begin{align}
R_{d,n,o}' =\rho c_{r_1}\sqrt{s} r_{d,s}+r_\delta +c_{r_2}\sqrt{\frac{o}{n}}\sqrt{\kappa_{\mr{u}}^2\left(sr_{d,s}+r_\delta \right)+\Sigma_{\max}^2}r_2.
\end{align}
	\begin{theorem}
		\label{t:main2}
			Suppose that (i) and (iii) of Assumption \ref{a:intro} and Assumption \ref{a:noise} hold. 
			Suppose that the parameters $\lambda_o,\lambda_s,\varepsilon,\tau_{\rm cut},r_1,r_2, r_\Sigma$ satisfy 
					\begin{align}
			\label{ine:lambda_o1:un}
				1&\geq 7c_ o \left(4 + c_s  \right) c'^2_{\max}  \sqrt{1+\log L}L^2R_{d,n,o}' ,\\
			\label{ine:lambda_s1:un}
				\lambda_s&= c_s c_{\max}^2L\lambda_o\sqrt{n}\frac{1}{c_{r_1}\sqrt{s}}R_{d,n,o}' ,\,\,
				 \varepsilon = c_\varepsilon \frac{o}{n},\,\,
				\tau_{\rm cut}=c_{\rm cut}\left((L\kappa_{\mr{u}})^2\left(sr_{d,s}+r_\delta \right)+\Sigma_{\max}^2\right)r_2^2,\\
			\label{ine:r121:un}
				 r_1&= c_{r_1}\sqrt{s}r_\Sigma ,\quad r_2 =  c_{r_2}r_\Sigma, \quad
				r_\Sigma =  7\left(4 + c_s  \right)c'^2_{\max}L\lambda_o\sqrt{n}R_{d,n,o}' ,
			\end{align}
			 where $c_o,c_s,c_\varepsilon, c_{\rm cut}, c_{r_1},c_{r_2}$, and $c'_{\max}$ are sufficiently large numerical constants such that $c_o\geq 4$, $c_s \geq 3(c_{\mr{RE}}+1)/(c_{\mr{RE}}-1)$, $2> c_\varepsilon\geq 1$, $c_{\rm cut}\geq c_7$, $c_{r_1} = c_{r_1}^{\mr{num}}(1+c_{\mr{RE}})/\mathfrak{r}$, $c_{r_2} = c_{r_2}^{\mr{num}}(3+c_{\mr{RE}})/\kappa_{\mr{l}},\,\min\{c_{r_1}^{\mr{num}},\,c_{r_2}^{\mr{num}}\}\geq 2$ and $c_{r_1}^{\mr{num}}/c_{r_2}^{\mr{num}}\leq 1$. In  Corollary \ref{c:cwpre} and  Definition \ref{d:max2}, $c_7$ and  $c_{\max}'$  are defined, respectively.
			Suppose that $\max\{\sqrt{2}r_\delta, sr_{d,s}\}\leq 1$,  $0<o/n\leq 1/2$ hold.
			 Then, the optimal solution $\hat{\vecbeta}$ satisfies the following:
			\begin{align}
			\label{ine:result:un}
			\|\Sigma^\frac{1}{2}(\hat{\vecbeta} -\vecbeta^*)\|_2 & \leq r_\Sigma, \,\, \|\hat{\vecbeta} -\vecbeta^*\|_2  \leq r_2\,\,\text{and} \,\,\|\hat{\vecbeta} -\vecbeta^*\|_1 \leq r_1,
			\end{align}
			with probability at least $1-3\delta$.
		\end{theorem}
		\begin{remark} 
				We consider the results of \eqref{ine:result:un} in details.
				Assume $\mbb{E}\xi_i^2\leq \sigma^2$ and the equality of \eqref{ine:lambdacond} hold.		Define $C_{c_{\mr{RE}},4},\,C_{c_{\mr{RE}},5}$ and $C_{c_{\mr{RE}},6}$ are constants depending on $c_{\mr{RE}}$.
				Then, we have
				\begin{align}
					\label{ine:result2-rem}
					\|\Sigma^\frac{1}{2}(\hat{\vecbeta} -\vecbeta^*)\|_2 & \leq C_{c_{\mr{RE}},4}\mathfrak{L}^3(R_{\mr{lasso}}+R'_{\mr{outlier}}), \\
					 \|\hat{\vecbeta} -\vecbeta^*\|_2  &\leq C_{c_{\mr{RE}},5}\mathfrak{L}^3\frac{1}{\kappa_{\mr{l}}}(R_{\mr{lasso}}+R'_{\mr{outlier}}),\\
					\|\hat{\vecbeta} -\vecbeta^*\|_1 &\leq C_{c_{\mr{RE}},6}\mathfrak{L}^3\frac{1}{\mathfrak{r}}\sqrt{s}(R_{\mr{lasso}}+R'_{\mr{outlier}}),
					\end{align}
					and we see that \eqref{ine:result2-rem} recovers \eqref{rec2-sigma}.
				Investigation of whether it is possible to achieve similar error bounds using a estimation method without covariance as in the case of with covariance, and the exploration of the trade-offs in such a scenario, are left as future research tasks.
			\end{remark}

\section{Key techniques}
\label{sec:keyL}
\subsection{Key propositions and lemma for Theorem \ref{t:main}}
First, we introduce Proposition \ref{p:cwpre}, that gives the condition on $\tau_{\rm cut}$ when the covariance matrix $\Sigma$ is known.  The proof is given in Section \ref{sec:pl}.
\begin{proposition}
\label{p:cwpre}
Suppose that the assumptions in Theorem \ref{t:main} hold.
	Define $c_1$ and $c_2$ as  numerical constants.
	Then, with probability at least $1-\delta$, we have
	\begin{align}
	\label{ine:cwpre'}
		\max_{M \in \mathfrak{M}_{r_1,r_2,d}^{\ell_1,\mr{Tr}} }\sum_{i=1}^n \frac{\left\langle\vecx_i \vecx_i^\top-\Sigma,M\right\rangle}{n}\leq c_1(L\kappa_{\mr{u}})^2\left(sr_{d,s}+r_\delta \right)r_2^2.
	\end{align}
	Additionally, 	with  probability  at least $1-2\delta$, we have 
	\begin{align}
	\label{ine:cwpre2}
		\max_{M \in \mathfrak{M}_{r_1,r_2,d}^{\ell_1,\mr{Tr}} }\sum_{i=1}^n \hat{w}_i\left\langle\vecX_i \vecX_i^\top-\Sigma,M\right\rangle\leq c_2(L\kappa_{\mr{u}})^2\left(sr_{d,s}+r_\delta  +r_o'\right)r_2^2.
	\end{align}
\end{proposition}
Therefore,  we see that, when $c_2(L\kappa_{\mr{u}})^2\left(sr_{d,s}r_\delta +r_o'\right)r_2^2\leq \tau_{\rm cut}$, Algorithm \ref{alg:cw0} succeeds at returning $\hat{\vecw}$ under  \eqref{ine:cwpre2}.
The key techniques of the proof of the proposition above are Corollary 2.8 of \cite{Zaj2020Bounds}, that is the one of the variants of the Hanson--Wright inequality \cite{HanWri1971Bound,Wri1973Bound,RudVer2013Hanson,Ada2015Note,HsuKakZha2012Tail}, and generic chaining for a subexponential random variable
(Corollary 5.2 of \cite{Dir2015Tail}).

Next, we introduce a deterministic proposition related to Theorem \ref{t:main}. Let 
\begin{align} 
	r_{\vecv,i} =\hat{w}_i'n\frac{y_i-\vecX_i^\top \vecv}{\lambda_o \sqrt{n}},\quad X_{\vecv,i} = \frac{\vecX_i^\top \vecv}{\lambda_o\sqrt{n}},\quad x_{\vecv,i} = \frac{\vecx_i^\top \vecv}{\lambda_o\sqrt{n}},\quad \xi_{\lambda_o,i} = \frac{\xi_i}{\lambda_o \sqrt{n}},
\end{align}
and for $\eta \in (0,1)$,
\begin{align}
	\vectheta = \hat{\vecbeta}-\vecbeta^*,\quad \vectheta_\eta = (\hat{\vecbeta}-\vecbeta^*)\eta.
\end{align}
The following proposition is proved in a manner similar to the proof of Proposition 9.1 of \cite{AlqCotLec2019Estimation}, and  the proof is given in the Appendix \ref{sec:mainproposition-pre} and \ref{sec:mainproof}. 
\begin{proposition}
\label{p:main}
	Suppose that, for any $\vectheta_\eta \in r_1 \mbb{B}^d_1 \cap r_2 \mbb{B}^d_2 \cap r_\Sigma \mbb{B}^d_\Sigma$, 
	\begin{align}
	\label{ine:det:main:0-1}
		\left| \lambda_o\sqrt{n}\sum_{i=1}^n \hat{w}_i'h(r_{\vecbeta^*,i}) \vecX_i^\top \vectheta_\eta \right| &\leq r_{a,1}r_1+r_{a,2} r_2+r_{a,\Sigma} r_\Sigma,\\
		\label{ine:det:main:0-2}
		b \|\Sigma^\frac{1}{2}\vectheta_\eta\|_2^2- r_{b,2} r_2-r_{b,\Sigma }r_\Sigma-r_{b,1}r_1&\leq \sum_{i=1}^n \lambda_o\sqrt{n} \hat{w}_i'\left(-h(r_{\vecbeta^*+\vectheta_\eta,i}) +h(r_{\vecbeta^*,i})\right) \vecX_i^\top \vectheta_\eta,
	\end{align}
	where $r_{a,1},r_{a,2}, r_{a,\Sigma}, r_{b,1}, r_{b,2}, r_{b,\Sigma}\geq0 ,\, b>0$ are some  numbers.
	Suppose that $\mbb{E}\vecx_i\vecx_i^\top=\Sigma$ satisfies $\mr{RE}(s,c_{\mr RE},\mathfrak{r}),\,\kappa_{\mr{l}}>0$, and 
	\begin{align}
	\label{ine:det:main:0-3}
		&\lambda_s-\left(r_{a,1}+ \frac{c_{r_2}r_{a,2}+ r_{a,\Sigma}}{c_{r_1}\sqrt{s}}\right) >0,\quad\frac{\lambda_s +  \left(r_{a,1}+ \frac{c_{r_2}r_{a,2}+ r_{a,\Sigma}}{c_{r_1}\sqrt{s}}\right)}{\lambda_s -  \left(r_{a,1}+ \frac{c_{r_2}r_{a,2}+ r_{a,\Sigma}}{c_{r_1}\sqrt{s}}\right) }\leq c_{\mr{RE}},\\
	\label{ine:det:main:0-4}
		&r_\Sigma \geq \frac{2}{b} \left(c_{r_1}\sqrt{s}(r_{a,1}+r_{b,1})+
		c_{r_2}(r_{a,2}+r_{b,2}) +r_{a,\Sigma}+r_{b,\Sigma} + c_{r_1}\sqrt{s}\lambda_s\right),\quad r_1 =  c_{r_1}\sqrt{s}r_\Sigma\quad r_2 = c_{r_2} r_\Sigma
	\end{align}
	hold, where $c_{r_1} = c_{r_1}^{\mr{num}}(1+c_{\mr{RE}})/\mathfrak{r}$, $c_{r_2} = c_{r_2}^{\mr{num}}(3+c_{\mr{RE}})/\kappa_{\mr{l}},\,\min\{c_{r_1}^{\mr{num}},\,c_{r_2}^{\mr{num}}\}\geq 2$ and $c_{r_1}/c_{r_2}\leq 1$.
	Then, we have the following:
	\begin{align}
	\label{ine:knownresults}
		\|\vecbeta^*-\hat{\vecbeta}\|_1\leq r_1,\quad \|\vecbeta^*-\hat{\vecbeta}\|_2\leq r_2,\quad \|\Sigma^\frac{1}{2}(\vecbeta^*-\hat{\vecbeta})\|_2\leq r_\Sigma.
	\end{align}
\end{proposition}

In the remainder of Section \ref{sec:keyL}, we introduce Propositions \ref{p:main1}--\ref{p:main:sc} and one lemma.
In Section \ref{sec:pmt}, we prove Theorem \ref{t:main} using the propositions.
In the proof of  Theorem \ref{t:main}, we 
prove that \eqref{ine:det:main:0-1}  - \eqref{ine:det:main:0-4}
are satisfied with high probability for appropriate values of $r_{a,1}, r_{a,2}, r_{a,\Sigma}, r_{b,1}, r_{b,2}, r_{b,\Sigma}$ and $b$ under the assumptions in Theorem \ref{t:main}, and we see that \eqref{ine:knownresults} is also satisfied. Then, we have the result \eqref{ine:result1} in Theorem \ref{t:main}.
We note that, for Proposition \ref{p:main:sc}, similar statements are found, for example, in \cite{SunZhoFan2020Adaptive,CheZho2020Robust}, and the proof of Proposition \ref{p:main:sc} basically follows the same line to the ones in \cite{SunZhoFan2020Adaptive,CheZho2020Robust}. 
Propositions \ref{p:main:out} and \ref{p:main:out2} are proved by relatively simple calculations based on the result of Proposition \ref{p:cwpre}.  
Therefore, the proofs of Propositions \ref{p:main:out}-\ref{p:main:sc}  are given in the Appendix \ref{sec:proofkeyL}.

\begin{proposition}
\label{p:main1}
Suppose that the assumptions in Theorem \ref{t:main} or Theorem \ref{t:main2} hold.
	Then, for any $\vecv\in r_1 \mbb{B}^d_1 \cap r_\Sigma \mbb{B}^d_\Sigma$, with probability at least $1-\delta$, we have
	\begin{align}
	\label{ine:p:main1}
		\left| \sum_{i=1}^n \frac{1}{n} h(\xi_{\lambda_o,i}) \vecx_i^\top\vecv \right|\leq c_3L\left(\rho r_{d,s}r_1+\sqrt{s}r_{d,s}r_\Sigma+r_\delta r_\Sigma\right),
	\end{align}
	where $c_3$ denotes a numerical constant.
\end{proposition}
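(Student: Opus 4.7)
The plan is to exploit the boundedness $|h|\le 1$ together with the $\mathfrak{L}$-subGaussianity of $\vecx_i$ so that the linear process $Z(\vecv) := \frac{1}{n}\sum_{i=1}^n h(\xi_{\lambda_o,i}) \vecx_i^\top \vecv$ is a centered sub-Gaussian process on $\mbb{R}^d$, and then to control its supremum over $V = \mathfrak{V}_{r_1,r_2,d}^{\leq,\leq}$ via a sparse/spread decomposition of $\vecv$.

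First I would observe that by Condition~(ii) of Assumption~\ref{a:noise} the summands $h(\xi_{\lambda_o,i}) \vecx_i$ are centered, and since $|h(t)|\le 1$ the pointwise bound $|h(\xi_{\lambda_o,i})\vecx_i^\top \vecv|\le|\vecx_i^\top \vecv|$ combined with~\eqref{ine:lsgtmp} gives $\|h(\xi_{\lambda_o,i})\vecx_i^\top \vecv\|_{\psi_2}\le L\Sigma_{\max}\|\vecv\|_2$. By independence in $i$ and the standard Orlicz bound for sums of independent centered sub-Gaussian variables, $Z(\vecv)$ has sub-Gaussian parameter of order $(L\Sigma_{\max})^2\|\vecv\|_2^2/n$.

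Next, for any $\vecv\in V$ I would use the decomposition $\vecv = \vecv^{\# 1}+\vecv^{\# 2}$ with $\vecv^{\# 1}$ supported on the indices of the $s$ largest magnitudes, so that $\vecv^{\# 1}$ is $s$-sparse with $\|\vecv^{\# 1}\|_2\le r_2$ and $\|\vecv^{\# 2}\|_1\le r_1$. Setting $\vecu = \frac{1}{n}\sum_{i=1}^n h(\xi_{\lambda_o,i})\vecx_i$, H{\"o}lder's inequality yields
\begin{align*}
|Z(\vecv)| \le \|\vecu\|_\infty\, r_1 + \max_{|S|\le s}\|\vecu_S\|_2 \cdot r_2.
\end{align*}
The first factor is handled by a coordinate-wise sub-Gaussian tail (each coordinate of $\vecu$ is an average of $n$ i.i.d.\ centered sub-Gaussian variables with $\psi_2$-norm $\lesssim L\Sigma_{\max}/\sqrt{n}$) together with a union bound over the $d$ coordinates, yielding $\|\vecu\|_\infty\lesssim L\Sigma_{\max}\, r_d$; the residual $\sqrt{\log(1/\delta)/n}$ contribution produced by the union bound can be absorbed into the $r_\delta r_2$ summand. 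For the sparse $\ell_2$ factor I would fix a support $S$ with $|S|\le s$ and apply a $(1/2)$-net of cardinality at most $9^s$ on $\{\vecw\in\mbb{R}^d:\|\vecw\|_2\le 1,\ \mr{supp}(\vecw)\subseteq S\}$; sub-Gaussian concentration of $\langle \vecu,\vecw\rangle$ at each net point plus the standard net-to-sup passage give $\|\vecu_S\|_2\lesssim L\Sigma_{\max}\sqrt{(s+\log(1/\delta))/n}$, and a union bound over the $\binom{d}{s}\le(ed/s)^s$ choices of $S$ produces $\max_{|S|\le s}\|\vecu_S\|_2\lesssim L\Sigma_{\max}(\sqrt{s}\, r_d + r_\delta)$.

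The main technical nuance is the bookkeeping of failure probabilities so that both events hold simultaneously on a single event of probability at least $1-\delta$, and so that the $\log(d/s)$ factor appears through $\log\binom{d}{s}\le s\log(ed/s)$ rather than $\log d$ after combining the coordinate-wise bound with the sparse $\ell_2$ bound. Putting the two estimates together then yields the stated inequality~\eqref{ine:p:main1} with an absolute constant $c_3$.
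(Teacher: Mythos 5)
The route you take (a direct sparse/diffuse split plus union bounds) is genuinely different from the paper's, which instead uses generic chaining: the paper verifies a sub-Gaussian increment bound for $Z_i(\vecv)=h(\xi_{\lambda_o,i})\vecx_i^\top\vecv$, passes through Proposition~2.6.1 and Exercise~8.6.5 of \citep{Ver2018High} (a Talagrand-type tail comparison) to reduce the supremum to the Gaussian width $\mbb{E}\sup_{\vecv\in\mathfrak{V}^{\leq,\leq}_{r_1,r_2,d}}\vecg^\top\vecv$ plus a $\sqrt{\log(1/\delta)}\,r_2$ deviation, and then bounds the Gaussian width via Lemma~\ref{l:gwslope:main}. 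However, your version has two concrete gaps, both traceable to decomposing $\vecv$ by the magnitudes of $\vecv$ itself.

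First, a union bound over all $d$ coordinates of $\vecu=\tfrac{1}{n}\sum_i h(\xi_{\lambda_o,i})\vecx_i$ cannot give $\|\vecu\|_\infty\lesssim L\Sigma_{\max}r_d$ with $r_d=\sqrt{\log(d/s)/n}$; it gives $\lesssim L\Sigma_{\max}\sqrt{(\log d+\log(1/\delta))/n}$, and $\log d$ is strictly larger than $\log(d/s)$ when $s$ is not $O(1)$. Your remark that the $\log(d/s)$ should emerge ``through $\log\binom{d}{s}$'' only addresses the sparse $\ell_2$ term multiplying $r_2$; it does not rescue the $\ell_\infty$ term multiplying $r_1$. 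Second, the $\sqrt{\log(1/\delta)/n}$ contribution from the $\ell_\infty$ union bound is multiplied by $r_1=c_{r_1}\sqrt{s}\,r_2$, producing a term of order $\sqrt{s}\,r_\delta r_2$, which cannot be absorbed into $r_\delta r_2$ (the ratio $r_1/r_2\sim\sqrt{s}$ is not a numerical constant). The target bound deliberately pairs $r_\delta$ only with $r_2$, and this is what the chaining argument delivers because the deviation term there is $\sqrt{\log(1/\delta)}$ times the $\ell_2$-diameter of the index set, which is $\sim r_2$.

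The fix is to split on the magnitudes of $\vecu$ rather than of $\vecv$: with $S_u$ the support of the $s$ largest $|u_j|$,
\begin{align}
|\vecu^\top\vecv|\leq \left\|\vecu_{S_u}\right\|_2\left\|\vecv\right\|_2 + u'_{s+1}\left\|\vecv\right\|_1,
\end{align}
where $u'_{s+1}$ is the $(s+1)$-th largest $|u_j|$ and satisfies $u'_{s+1}\leq s^{-1/2}\|\vecu_{S_u}\|_2$. This is exactly the mechanism in the paper's proof of Lemma~\ref{l:gwslope:main}, applied there to $\vecg$; it makes both the $r_1$ and $r_2$ prefactors controlled by the top-$s$ sum $\sum_{j\in S_u}u_j^2$, which concentrates at the $s\log(d/s)/n$ scale. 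Without this change of viewpoint, your argument proves a strictly weaker inequality than Proposition~\ref{p:main1}.
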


\begin{proposition}
\label{p:main:out}
Suppose that the assumptions in Theorem \ref{t:main} hold.
	Furthermore, suppose that \eqref{ine:cwpre'} and \eqref{ine:cwpre2} hold and that Algorithm \ref{alg:cw0} returns $\hat{\vecw}$.
	For any $\vecu \in \mbb{R}^n$ such that $\|\vecu\|_\infty \leq 2$ and for any $\vecv \in r_1 \mbb{B}^d_1 \cap r_2 \mbb{B}^d_2$, we have 
	\begin{align}
	\label{ine:p:main:out}
		\left|\sum_{i \in \mc{O}}\hat{w}'_iu_i \vecX_i^\top\vecv \right|\leq c_4L\sqrt{1+c_{\rm cut}}\left(\kappa_{\mr{u}}\sqrt{\frac{o}{n}}\left(\sqrt{sr_{d,s}}+\sqrt{r_\delta}\right)+\kappa_{\mr{u}}r_o\right)r_2,
	\end{align}
	where $c_4$ is a numerical constant that depends on $c_1$ and  $c_2$.
\end{proposition}

Let $I_m$ be an index set such that $|I_m|$ = $m$.
\begin{proposition}
\label{p:main:out2}
Suppose that the assumptions in Theorem \ref{t:main} hold.
	Furthermore, suppose that \eqref{ine:cwpre'} holds.
	Then, for any $m \in \mbb{N}$ such that $m\leq (2c_\varepsilon+1) o$, for any $\vecu \in \mbb{R}^n$ such that $\|\vecu\|_\infty \leq 2$ and for any $\vecv \in r_1 \mbb{B}^d_1 \cap r_2 \mbb{B}^d_2$, we have the following:
	\begin{align}
	\label{ine:p:main:out2}
		\left|\sum_{i \in I_m}\frac{1}{n}u_i \vecx_i^\top\vecv \right|\leq c_5L\left(\kappa_{\mr{u}}\sqrt{\frac{o}{n}}\left(\sqrt{sr_{d,s}}+\sqrt{r_\delta}\right)+\kappa_{\mr{u}}r_o\right)r_2,
	\end{align}
	where $c_5$ is a numerical constant that depends on $c_1$ and $c_\varepsilon$.
\end{proposition}

\begin{proposition}
\label{p:main:sc}
Suppose that the assumptions in Theorem \ref{t:main} or Theorem \ref{t:main2} hold.
	Then, for any $\vecv \in r_1 \mbb{B}^d_1 \cap r_\Sigma \mbb{B}^d_\Sigma$, with probability  at least $1-\delta$, we have 
	\begin{align}
	\label{ine:sc}
		&\sum_{i=1}^n \frac{\lambda_o}{\sqrt{n}} \left(-h(\xi_{\lambda_o,i}-x_{\vecv,i})+h(\xi_{\lambda_o,i})\right)\vecx_i^\top \vecv \geq \frac{\|\Sigma^\frac{1}{2}\vecv\|_2^2}{3}-c_{6}L\lambda_o\sqrt{n}\left(\rho r_{d,s} r_1+\sqrt{s}r_{d,s}r_\Sigma+r_\delta r_\Sigma\right),
	\end{align}
	where $c_6$ is a numerical constant.
\end{proposition}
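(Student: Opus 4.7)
The plan is to exploit the monotonicity and local quadratic behavior of $h$ to reduce the inequality to a lower bound on a truncated empirical quadratic form, and then to apply the restricted eigenvalue condition together with the same $\mathfrak{L}$-subGaussian concentration machinery underlying Proposition \ref{p:cwpre}. The starting point is the pointwise inequality
\begin{align*}
(h(y)-h(y-u))\,u \;\geq\; u^2\,\mathbbm{1}_{\{|y|\leq 1/2,\,|u|\leq 1/2\}},
\end{align*}
which follows from monotonicity of $h$ together with $h'\equiv 1$ on $[-1,1]$ (so that $h(y)-h(y-u)=u$ whenever both $|y|,|u|\leq 1/2$). Applying this with $y=\xi_{\lambda_o,i}$ and $u=x_{\vecv,i}$, and using $\vecx_i^\top\vecv=\lambda_o\sqrt{n}\,x_{\vecv,i}$, I get
\begin{align*}
\text{LHS of }\eqref{ine:sc}\;\geq\;\frac{1}{n}\sum_{i=1}^n(\vecx_i^\top\vecv)^2\,\mathbbm{1}_{E_i(\vecv)},\qquad E_i(\vecv):=\Bigl\{|\xi_i|\leq\tfrac{\lambda_o\sqrt{n}}{2},\ |\vecx_i^\top\vecv|\leq\tfrac{\lambda_o\sqrt{n}}{2}\Bigr\}.
\end{align*}

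Next I split $\mathbbm{1}_{E_i}=1-\mathbbm{1}_{E_i^c}$ and handle the two pieces separately. For the full quadratic term, the cone condition $\|\vecv^{\#2}\|_1\leq 2\|\vecv^{\#1}\|_1$ and $\mr{RE}(s,2)$ give $\|\Sigma^{1/2}\vecv\|_2^2\geq\mathfrak{r}^2 r_2^2$, while Proposition \ref{p:cwpre} applied to the rank-one PSD matrix $M=\vecv\vecv^\top$ (which satisfies $\|M\|_1\leq r_1^2$ and $\mr{Tr}(M)\leq r_2^2$) controls $\bigl|\tfrac{1}{n}\sum_i(\vecx_i^\top\vecv)^2-\|\Sigma^{1/2}\vecv\|_2^2\bigr|$ uniformly in $\vecv$ with an error of order $(L\Sigma_{\max})^2\{r_d r_1^2+(\sqrt{s}r_d+r_\delta)r_2^2\}$, which after rescaling is absorbed in the desired $L\Sigma_{\max}\lambda_o\sqrt{n}(r_d r_1+r_\delta r_2)$ via the hypothesis $\lambda_o\sqrt{n}\geq c_o L\Sigma_{\max}r_2\sqrt{1+\log L}$. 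For the truncation error, dominate $\mathbbm{1}_{E_i^c}\leq\mathbbm{1}_{|\xi_i|>\lambda_o\sqrt{n}/2}+\mathbbm{1}_{|\vecx_i^\top\vecv|>\lambda_o\sqrt{n}/2}$; the covariate piece is handled via $(\vecx_i^\top\vecv)^2\mathbbm{1}_{|\vecx_i^\top\vecv|>T}\leq 4(\vecx_i^\top\vecv)^4/T^2$ with $T=\lambda_o\sqrt{n}/2$, using the $L_4$ bound in \eqref{cl} together with the same chaining uniformization for the fourth-moment process, while the noise piece is handled by Assumption \ref{a:noise}(i), yielding $\mbb{P}(|\xi_i|>\lambda_o\sqrt{n}/2)\leq 1/(72L^4)$, combined with Cauchy--Schwarz and \eqref{cl}. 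The constants are arranged so that, with $c_o\geq 50$, the total truncation contribution is smaller than $\tfrac{2}{3}\mathfrak{r}^2 r_2^2$, leaving $\tfrac{1}{3}\mathfrak{r}^2 r_2^2$ minus a concentration residual of the required form.

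The main obstacle is the uniformity in $\vecv$: a naive Markov-type bound on the truncated empirical process would lose multiplicative factors of $L\Sigma_{\max}/\mathfrak{r}$ that would spoil the sharp $r_d r_1+r_\delta r_2$ scale in \eqref{ine:sc}. The generic-chaining argument of Proposition \ref{p:cwpre} supplies exactly the needed uniform control for the quadratic process $\vecv\mapsto\tfrac{1}{n}\sum_i(\vecx_i^\top\vecv)^2$ on the cone inside $\mathfrak{V}_{r_1,r_2,d}^{\leq,=}$, and an analogous subexponential chaining (Corollary 5.2 of \citep{Dir2015Tail}) gives the matching bound for the fourth-moment truncation residual. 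Finally, note that full independence between $\xi_i$ and $\vecx_i$ is not needed here: the one-sided tail condition \eqref{ine:lambda} combined with Cauchy--Schwarz handles the noise-truncation term, which is consistent with the weaker hypotheses of Assumption \ref{a:noise}.
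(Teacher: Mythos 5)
You identify the correct pointwise inequality and the correct initial reduction to the truncated quadratic form $\tfrac{1}{n}\sum_i(\vecx_i^\top\vecv)^2\mathbbm{1}_{E_i}$, but the empirical-process decomposition you propose is genuinely different from the paper's and has two gaps that break the required scaling.

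The paper does not split $\mathbbm{1}_{E_i}=1-\mathbbm{1}_{E_i^c}$. Instead it replaces $x_{\vecv,i}^2\mathbbm{1}_{E_i}$ by the bounded $\tfrac12$-Lipschitz surrogate $\varphi(x_{\vecv,i})\psi(\xi_{\lambda_o,i})$. Boundedness lets them invoke Massart's concentration for bounded empirical processes, producing a deviation term governed by the variance proxy $\sigma_f\lesssim L\Sigma_{\max}r_2/\lambda_o$ (one factor of $r_2$). Lipschitzness lets them apply the contraction principle twice after symmetrization, reducing the Rademacher complexity of the quadratic-type process to that of the \emph{linear} process $\vecv\mapsto\sum_i a_i\vecx_i^\top\vecv$, whose chaining bound is $L\Sigma_{\max}r_d r_1$ (Lemma \ref{p:1e}). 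That contraction step is what produces the single factor of $r_1$ in the final residual $L\Sigma_{\max}\lambda_o\sqrt{n}(r_d r_1+r_\delta r_2)$.

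Your first gap is the chaining bound for the un-truncated quadratic form. Applying Proposition \ref{p:cwpre} (or Lemma \ref{l:gc}) to $M=\vecv\vecv^\top$ gives an error of order $(L\Sigma_{\max})^2 r_d r_1^2$, with \emph{two} factors of $r_1$. Absorbing $(L\Sigma_{\max})^2 r_d r_1^2$ into the target $L\Sigma_{\max}\lambda_o\sqrt{n}\,r_d r_1$ via \eqref{ine:lambda_o1} would require $L\Sigma_{\max}r_1\lesssim\lambda_o\sqrt{n}$; but $r_1=c_{r_1}\sqrt{s}\,r_2$ and \eqref{ine:lambda_o1} only gives $L\Sigma_{\max}r_2\sqrt{1+\log L}\lesssim\lambda_o\sqrt{n}$, so this absorption needs $\sqrt{s}\lesssim\sqrt{1+\log L}$, which fails as $s$ grows. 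Restricting to the cone does not help: on the cone $\|\vecv\|_1\leq 3\sqrt{s}r_2$, still of order $\sqrt{s}r_2$. This is precisely the factor that contraction saves and that your direct quadratic chaining cannot.

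Your second gap is the truncation residual $\tfrac{1}{n}\sum_i(\vecx_i^\top\vecv)^2\mathbbm{1}_{E_i^c}$. You propose to dominate the covariate part by $\tfrac{1}{n}\sum_i(\vecx_i^\top\vecv)^4/T^2$ and invoke Corollary 5.2 of \citep{Dir2015Tail} for a uniform bound. But each summand $(\vecx_i^\top\vecv)^4$ is the square of a $\psi_1$ (subexponential) variable and is therefore $\psi_{1/2}$, not $\psi_1$; Corollary 5.2 of \citep{Dir2015Tail} is a $\gamma_1+\gamma_2$ deviation bound calibrated to $\psi_1$ increments and does not cover $\psi_{1/2}$ processes. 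The paper circumvents this entirely by pushing the truncation error into the \emph{expectation} $\mbb{E}\sum_i f_i(\vecv)$ only (\eqref{ap:ine:cov1}, \eqref{ap:ine:cov2}), where it is handled pointwise in $\vecv$ by Cauchy--Schwarz with the fourth-moment bound \eqref{ine:v3}, the covariate tail bound \eqref{ine:lsg3}, and the noise tail condition \eqref{ine:lambda}; no uniformity-in-$\vecv$ is needed there because the bound is a fixed multiple of $\|\Sigma^{1/2}\vecv\|_2^2$. Your plan moves the truncation into the empirical process, where it would require a heavier-tailed uniform deviation result that the cited tools do not supply.
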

We define $c_{\max}$.
\begin{definition}
\label{d:max}
	Define 
	\begin{align}
		c_{\max} = \max \left(1,c_3,c_4 \sqrt{1+c_{\rm cut}},c_5,c_6\right).
	\end{align}
\end{definition}
Let $I_{<}$ and $I_{\geq}$ be the sets of indices such that $w_i < 1/(2n)$ and $w_i \geq 1/(2n)$, respectively. 
\begin{lemma}
\label{l:w2}
	Suppose that $0<\varepsilon <1$. Then, for any $\vecw \in \Delta^{n-1}(\varepsilon)$, we have $|I_{<}| \leq 2n\varepsilon$.
\end{lemma}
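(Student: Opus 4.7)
The plan is a direct combinatorial argument that exploits both defining properties of the simplex $\Delta^{n-1}(\varepsilon)$: the weights sum to $1$ and are bounded above by $1/(n(1-\varepsilon))$. The idea is to get an upper bound on $\sum_i w_i$ by splitting into $I_{<}$ and $I_{\geq}$, using the defining threshold $1/(2n)$ on $I_{<}$ and the $\ell_\infty$-constraint on $I_{\geq}$; setting this $\leq 1 = \sum_i w_i$ forces an inequality on $|I_{<}|$.

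Concretely, I would write
\begin{align}
1 = \sum_{i=1}^n w_i = \sum_{i \in I_{<}} w_i + \sum_{i \in I_{\geq}} w_i \leq |I_{<}|\cdot \frac{1}{2n} + |I_{\geq}|\cdot \frac{1}{n(1-\varepsilon)},
\end{align}
where the first bound uses the definition $w_i < 1/(2n)$ for $i\in I_{<}$ and the second uses $\|\vecw\|_\infty \leq 1/(n(1-\varepsilon))$. Writing $k = |I_{<}|$ and $|I_{\geq}| = n - k$, this becomes $1 \leq k/(2n) + (n-k)/(n(1-\varepsilon))$. Multiplying through by $n(1-\varepsilon)$ and rearranging yields $k(1+\varepsilon)/2 \leq n\varepsilon$, hence $k \leq 2n\varepsilon/(1+\varepsilon) \leq 2n\varepsilon$, which is exactly the claim.

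There is essentially no obstacle here: the argument is a one-line pigeonhole/averaging after expanding the definitions, and the slack factor $1/(1+\varepsilon)$ even gives a slightly stronger bound than stated. The only thing to be careful about is the strict-vs-nonstrict inequality in the definition of $I_{<}$, but since $w_i < 1/(2n)$ certainly implies $w_i \leq 1/(2n)$, the above chain of inequalities is valid as written.
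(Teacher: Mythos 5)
Your proof is correct and takes essentially the same approach as the paper: split $\sum_i w_i = 1$ over $I_{<}$ and $I_{\geq}$, bound the first using $w_i < 1/(2n)$ and the second using $\|\vecw\|_\infty \le 1/(n(1-\varepsilon))$, and solve the resulting inequality for $|I_{<}|$. The paper packages this as a proof by contradiction and your version is a direct rearrangement yielding the slightly sharper constant $2n\varepsilon/(1+\varepsilon)$, but the underlying argument is identical.
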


 \subsection{Key propositions and corollary for Theorem \ref{t:main2}}
 \label{sec:keyL2}
 First, we introduce Corollary \ref{c:cwpre}, that gives the condition on $\tau_{\rm cut}$ when we do not use $\Sigma$ in the estimator.   The proof is given in Appendix.
 \begin{corollary}
 \label{c:cwpre}
	 Suppose that the assumptions in Theorem \ref{t:main2} hold.
	 Define $c_1'$ and $c_7$ as  numerical constants.
	 Then, with probability at least $1-\delta$, we have
	 \begin{align}
	 \label{ine:cwpre2-1}
		 \max_{M \in \mathfrak{M}_{r_1,r_2,d}^{\ell_1,\mr{Tr}} }\sum_{i=1}^n \frac{\left\langle\vecx_i \vecx_i^\top,M\right\rangle}{n}\leq c_1' L^2\left(\kappa_{\mr{u}}^2\left(sr_{d,s}+r_\delta \right)+\Sigma_{\max}^2\right)r_2^2.
	 \end{align}
	 Additionally,  with probability at least $1-\delta$, we have 
	 \begin{align}
	 \label{ine:cwpre2-2}
		 \max_{M \in \mathfrak{M}_{r_1,r_2,d}^{\ell_1,\mr{Tr}} }\sum_{i=1}^n \hat{w}_i\left\langle\vecX_i \vecX_i^\top ,M\right\rangle\leq c_7L^2\left(\kappa_{\mr{u}}^2\left(sr_{d,s}+r_\delta \right)+\Sigma_{\max}^2\right)r_2^2.
	 \end{align}
 \end{corollary}
 Therefore, under \eqref{ine:cwpre2-2}, when $c_7\left((L\kappa_{\mr{u}})^2\left(sr_{d,s}+r_\delta \right)+\Sigma_{\max}^2\right)r_2^2\leq \tau_{\rm cut}$, Algorithm \ref{alg:cw0-un} succeeds at  returning $\hat{\vecw}$. We see that  when the covariance is not used in the estimator, it is necessary to set the value of $\tau_{\mr{cut}}$ to a higher magnitude compared to the case when covariance is used.
 Using the propositions, in Section \ref{asec:pmt}, we can 
prove that \eqref{ine:det:main:0-1}  - \eqref{ine:det:main:0-4}
are satisfied with a high probability for appropriate values of $r_{a,1}, r_{a,2}, r_{a,\Sigma}, r_{b,1}, r_{b,2}, r_{b,\Sigma}$ and $b$ under the assumptions in Theorem \ref{t:main2}, and we see that \eqref{ine:knownresults} is also satisfied. Then, we can have the result \eqref{ine:result:un} in Theorem \ref{t:main2}.
When  $\Sigma$ is not used in the estimator, Propositions \ref{p:main} and \ref{p:main:sc} and Lemma \ref{l:w2} are commonly used, and 
 instead of Propositions \ref{p:main:out} and \ref{p:main:out2}, Propositions \ref{p:main:out-un} and \ref{p:main:out2-un} are used, respectively.
 In Definition \ref{d:max2}, $c_{\max}'$ is defined.
 The proofs of Propositions \ref{p:main:out-un} and  \ref{p:main:out2-un} are given in the  Appendix  \ref{sec:proofkeyL} because  Propositions  \ref{p:main:out-un} and  \ref{p:main:out2-un}  can be proved by  simple calculations based on the result of Proposition \ref{c:cwpre}.

 \begin{proposition}
 \label{p:main:out-un}
	 Suppose that the assumptions in Theorem \ref{t:main2} hold.
	 Furthermore, suppose that \eqref{ine:cwpre2-1} holds and that  Algorithm \ref{alg:cw0-un} returns $\hat{\vecw}$.
	 For any $\vecu \in \mbb{R}^n$ such that $\|\vecu\|_\infty \leq 2$ and for any $\vecv \in r_1 \mbb{B}^d_1 \cap r_2 \mbb{B}^d_2$, we have the following:
	 \begin{align}
		 \label{ine:p:main:out-un}
		 \left|\sum_{i \in \mc{O}}\hat{w}'_iu_i \vecX_i^\top\vecv \right|\leq c_8\sqrt{c_{\rm cut}}L\sqrt{\frac{o}{n}}\sqrt{\kappa_{\mr{u}}^2\left(sr_{d,s}+r_\delta \right)+\Sigma_{\max}^2}r_2,
	 \end{align}
	 where $c_8$ is a numerical constant.
 \end{proposition}
 
 \begin{proposition}
 \label{p:main:out2-un}
	 Suppose that the assumptions in Theorem \ref{t:main2} hold.
	 Furthermore, suppose that \eqref{ine:cwpre2-2} holds.
	 Then, for any $m \in \mbb{N}$ such that $m\leq (2c_\varepsilon+1) o$, for any $\vecu \in \mbb{R}^n$ such that $\|\vecu\|_\infty \leq 2$ and for any $\vecv \in r_1 \mbb{B}^d_1 \cap r_2 \mbb{B}^d_2$, we have the following
	 :
	 \begin{align}
		 \label{ine:p:main:out2-un}
		 \left|\sum_{i \in I_m}\frac{1}{n}u_i \vecx_i^\top\vecv \right|\leq c_9L\sqrt{\frac{o}{n}}\sqrt{\kappa_{\mr{u}}^2\left(sr_{d,s}+r_\delta \right)+\Sigma_{\max}^2}r_2,
	 \end{align}
	 where $c_9$ is a numerical constant that depends on $c_1'$ and $c_\varepsilon$.
 \end{proposition}
 
For Theorem \ref{t:main2}, the following definition of $c_{\max}'$ is used.
 \begin{definition}
 \label{d:max2}
	 Define 
	 \begin{align}
		 c_{\max}' = \max \left(1,c_3,c_6,c_8\sqrt{c_{\rm cut}},c_9\right).
	 \end{align}
 \end{definition}

\section{Proofs of  Propositions  \ref{p:cwpre} and  \ref{p:main1}}
\label{sec:pl}
The value of the numerical constant $C$ shall be allowed to change from line to line.
Define $\gamma_{s_0,\alpha}$-functional:
\begin{definition}[$\gamma_{s_0,\alpha}$-functional, \cite{Men2016Upper}]
	\label{d:tg2}
		Let $(T, d)$ be a semi-metric space with $d(x, z) \leq d(x, y) +
		d(y, z)$ and $d(x,y) = d(y,x)$ for $x, y, z \in T$.
		A sequence $\mc{T} = \{T_m\}_{m\geq 0}$ with subsets of $T$ is said to be admissible if $|T_0|=1$ and $|T_m|\leq 2^{2^m}$ for all $m\geq 1$.
		For any $\alpha \in (0,\infty)$, the $\gamma_{s_0,\alpha}$-functional of $(T,d)$ is defined by
		\begin{align}
			\gamma_{s_0,\alpha} (T,d) = \inf_{\mc{T}} \sup_{t\in T}\sum_{m=s_0}^\infty2^{\frac{m}{\alpha}} \inf_{s\in T_m} d(t,s),
		\end{align}
		where the infimum is taken over all admissible sequences $\mc{T} = \{T_m\}_{m\geq 0}$.
\end{definition}
	In particular, when $s_0=0$, $\gamma_{0,\alpha}$-functional is called the $\gamma_\alpha$-functional. 
	For a random variable $z$ and $p\geq 1$, define $\|\cdot\|_{(p)}$-norm as 
	\begin{align}
		\label{ine:(p)norm}
		\|z\|_{(p)}= \sup_{1\leq q\leq  p} \frac{\|z\|_{L_q}}{\sqrt{q}}.
	\end{align}
	Next, we define $\Lambda_{s_0,u}$ and $\tilde{\Lambda}_{s_0,u}$.
	\begin{definition}[$\Lambda_{s_0,u}$ and $\tilde{\Lambda}_{s_0,u}$, \cite{Men2016Upper}]
		\label{d:lambda}
			Given a class of functions $F$, $u\geq1$, $s_0 \geq 0$, define 
			\begin{align}
				\Lambda_{s_0,u}(F)= \inf_{\mc{F}} \sup_{ f\in F} \sum_{m= s_0}^\infty 2^{\frac{m}{2}}\|f-\pi_mf\|_{(u^22^m)},
			\end{align}
			where the infimum is taken over all admissible sequences $\mc{F} = \{F_m\}_{m\geq 0}$, and $\pi_m f$ is the nearest point in $F_m$ to $f$ in $\|\cdot\|_{(u^22^m)}$-norm. Additionally, define 
			\begin{align}
				\tilde{\Lambda}_{s_0,u}(F) = \Lambda_{s_0,u}(F)+2^\frac{s_0}{2}\sup_{f\in F}\|\pi_{s_0}f\|_{(u^22^{s_0})}.
			\end{align}
		\end{definition}
		We introduce  Lemma \ref{l:gwslope:main}, which is used in the proofs of  Propositions \ref{p:cwpre} and  \ref{p:main1}.
		The proof of this lemma  is given in the  Appendix \ref{asec:prooftg-eval}.
		Let $\vecg = (g_1,\cdots,g_d)^\top$ be the $d$-dimensional standard normal Gaussian random vector.
		\begin{lemma}
		\label{l:gwslope:main}
			Suppose that the assumptions in Theorem \ref{t:main} or Theorem \ref{t:main2} hold.
			We have
			\begin{align}
				\mbb{E} \sup_{\vecv \in r_1 \mbb{B}^d_1 \cap r_\Sigma \mbb{B}^d_\Sigma } \langle \Sigma^\frac{1}{2} \vecg, \vecv \rangle&\leq C\rho r_1\sqrt{ \log (d/s) }.
			\end{align}
		\end{lemma}

	\subsection{Preparation for the proof of Proposition  \ref{p:cwpre}}

	For a matrix $M$, we define the operator norm and Frobenius norm of $M$ as $\|M\|_{\rm op}$ and $\|M\|_{\rm F}$, respectively.
	and we define the number of non-zero elements of $M$  as $\|M\|_0$.
	Additionally, we define $a\mbb{B}^{d\times d}_1 = \{M \in \mbb{R}^{d\times d} \,\mid\, \|M\|_1\leq a \}$, $a\mbb{B}^{d\times d}_{\mr{F}} = \{M \in \mbb{R}^{d\times d} \,\mid\, \|M\|_{\mr{F}}\leq a \}$, and $a\mbb{B}^{d\times d}_0 = \{M \in \mbb{R}^{d\times d} \,\mid\, \|M\|_0\leq a \}$.

	\begin{lemma} \label{prop:twotwo}
		Suppose that (i) and (iii) of Assumptions \ref{a:intro}.
	For any fixed $M,M' \in  s^2 \mbb{B}^{d\times d}_0 \cap r_2^2 \mbb{B}^{d\times d}_{\mr{F}}$, we have
	\begin{align}
		\label{ine:bernstein1_1}
		\mbb{E}\left|\langle \vecx \vecx^\top-\Sigma, M\rangle \right|^p &\leq {p!} {(C(L\kappa_{\mr{u}})^2r_2^2)^p},  \\
		\label{ine:bernstein1_2}
		\| \langle \vecx \vecx^\top, M-M'\rangle \|_{\psi_1} &\le C(L\kappa_{\mr{u}})^2 \|M-M'\|_{\mr{F}}. 
	\end{align}
	\end{lemma}
	
	\begin{proof}
		In this proof, we define $c$ as some positive numerical constant.
		For any $a$-dimensional random variable $\veca$, 
		let the Luxemburg norm, that is an extension of the $\psi_2$-norm from scalar to vector, be denoted by
		\begin{align}
			\|\veca\|_{\psi_2} = \inf\left\{\eta>0\,:\,\sup_{\vecv \in \mbb{S}^{a-1}}\mbb{E}\exp\left(\frac{\langle \vecv,\veca\rangle^2}{\eta^2}\right)\leq 2\right\} 
		\end{align}
		From  \eqref{ine:lsg2-2}, for any index set $J$ such that $|J|\leq 2s^2$ and $\vecv \in \mbb{S}^{2s^2-1}$ we have
		\begin{align}
			\mbb{E}\exp\left(\frac{\langle \vecv,\vecx|_J\rangle^2}{c_{\mathfrak{L}}^2\mathfrak{L}^2\kappa_{\mr{u}}^2}\right) \leq 2 ,
		\end{align}
		and we have
		\begin{align}
		\label{lux}
			\|\vecx|_J\|_{\psi_2}  \leq c_{\mathfrak{L}}\mathfrak{L}\kappa_{\mr{u}}\leq L \kappa_{\mr{u}}.
		\end{align}
		For any matrix $A$ and any index set $J \subset \{1,\cdots,d\}$, define $A|_{J,J}$ as the matrix such that all the elements of the $i$-th rows and columns are zero for $i \in J^c$. 
		Fix $M \in s^2 \mbb{B}^{d\times d}_0 \cap r_2^2 \mbb{B}^{d\times d}_{\mr{F}}$. For $M$, let $K \subset \{1,\cdots,d\}$ be an index set such that $M_{ij} = 0$ for $i \in K^c$ or $j \in K^c$. We note that $|K|\leq s^2$. From Corollary 2.8 of \cite{Zaj2020Bounds} and \eqref{lux},  for any $t>0$, we have
		\begin{align}
			\label{ine:HS}
			\mbb{P} \left(\left|\langle \vecx \vecx^\top-\Sigma , M\rangle \right|>t \right) &=\mbb{P} \left(\left|\langle \vecx|_K \vecx|_K^\top-\Sigma|_{K,K} , M\rangle\right| >t \right) \nonumber\\
			&\leq 2 \exp\left\{-c \min \left(\frac{t^2}{(L\kappa_{\mr{u}})^4\|M\|_{\mr F}^2}, \frac{t}{(L\kappa_{\mr{u}})^2\|M\|_{\mr{F}}}\right)\right\}.
		\end{align}
		From \eqref{ine:HS}, for $t\leq (L\kappa_{\mr{u}})^2\|M\|_{\mr{F}}$, we have
		\begin{align}
			\label{ine:HS2}
			\mbb{P} \left(\left|\langle \vecx \vecx^\top-\Sigma , M\rangle \right|> t \right) \leq  2\exp\left(-c \frac{t^2}{(L\kappa_{\mr{u}})^4\|M\|_{\mr F}^2}\right),
		\end{align}
		and for $t\geq (L\kappa_{\mr{u}})^2\|M\|_{\mr{F}}$, we have
		\begin{align}
			\label{ine:HS3}
			\mbb{P} \left(\left|\langle \vecx \vecx^\top-\Sigma , M\rangle \right|> t \right)\leq  2\exp\left(-c \frac{t}{(L\kappa_{\mr{u}})^2\|M\|_{\mr{F}}}\right).
		\end{align}
	  We follow almost the same argument of the proof of Proposition 2.5.2 of \cite{Ver2018High}. For any $1\leq p<\infty$, we have
		\begin{align}
			\mbb{E}\left|\langle \vecx \vecx^\top-\Sigma , M\rangle \right|^{p} &= \int^\infty_0\mbb{P} \left(\left|\langle \vecx \vecx^\top-\Sigma , M\rangle \right|^{p}\geq u \right)du\nonumber\\
			&= \int^\infty_0 \mbb{P} \left(\left|\langle \vecx \vecx^\top-\Sigma , M\rangle \right|\geq t\right)pt^{p-1}dt\nonumber\\ 
			&\leq \int^\infty_02\exp\left(-c \frac{t^2}{(L\kappa_{\mr{u}})^4\|M\|_{\mr F}^2}\right) pt^{p-1}dt+\int^\infty_02\exp\left(-c \frac{t}{(L\kappa_{\mr{u}})^2\|M\|_{\mr F}} \right)pt^{p-1}dt\nonumber\\ 
			&\leq p\frac{(L\kappa_{\mr{u}})^{2p}\|M\|_{\mr F}^p}{\sqrt{c^p}}\Gamma(p/2)+2p \frac{(L\kappa_{\mr{u}})^{2p}\|M\|_{\mr F}^p}{c^p}\Gamma(p)\nonumber\\ 
			&\leq C \left(L\kappa_{\mr{u}} \left(\frac{1}{c^\frac{1}{4}}+\frac{1}{\sqrt{c}}\right)\right)^{2p}\|M\|_{\mr F}^p p \left( (p/2)^{p/2}+p^p\right),
		\end{align}
		and  we have
		\begin{align}
			\left(\mbb{E}\langle \vecx_i \vecx_i^\top-\Sigma, M\rangle^p\right) ^\frac{1}{p}&\leq C\left(L\kappa_{\mr{u}} \left(\frac{1}{c^\frac{1}{4}}+\frac{1}{\sqrt{c}}\right)\right)^2 \|M\|_{\mr{F}}  \left(p (p/2)^{p/2}+p^{p+1}\right)^\frac{1}{p}\nonumber\\
			&\leq C \left(L\kappa_{\mr{u}} \left(\frac{1}{c^\frac{1}{4}}+\frac{1}{\sqrt{c}}\right)\right)^2 \|M\|_{\mr{F}}  \left(2p^{p+1}\right)^\frac{1}{p}\nonumber\\
			&\leq   C \left(L\kappa_{\mr{u}} \left(\frac{1}{c^\frac{1}{4}}+\frac{1}{\sqrt{c}}\right)\right)^2\|M\|_{\mr{F}} p\leq  C \left(L\kappa_{\mr{u}}\right)^2\|M\|_{\mr{F}}p.
		\end{align}
		From Stirling's formula $p^p \le p! e^p$, we have
		\begin{align}
		\label{ine:bernstein}
			\max_{M\in\mathfrak{M}_{r_1,r_2,d}^{\ell_1,\mr{Tr}} }\frac{1}{n}\sum_{i=1}^n\mbb{E}\left|\langle \vecx_i \vecx_i^\top-\Sigma, M\rangle \right|^p\leq C p! e^p(L\kappa_{\mr{u}})^{2p}r_2^{2p},
		\end{align}
		and the proof of \eqref{ine:bernstein1_1} is complete.

		For any fixed $M,M' \in \mathfrak{M}_{r_1,r_2,d}^{\ell_1,\mr{Tr}} $,  let $K' \subset \{1,\cdots,d\}$ is the index set such that $(M-M')_{ij} = 0$ for $i \in K'^c$ or $j \in K'^c$. We note that $|K'|\leq 2\times s^2$. From Proposition 2.6, Remark 2.7 of \cite{Zaj2020Bounds} and \eqref{lux}, we have 
		\begin{align}
			\label{ine:sparsepsi1}
			\left\|\langle \vecx \vecx^\top, M-M'\rangle\right\|_{\psi_1}=\left\|\langle \vecx|_{K'} \vecx|_{K'} ^\top, (M-M')|_{K',K'}\rangle\right\|_{\psi_1} &\leq C (L\kappa_{\mr{u}})^2  \|(M-M')|_{K',K'}\|_{\mr{F}} \nonumber\\
			&\leq C (L\kappa_{\mr{u}})^2  \|M-M'\|_{\mr{F}},
		\end{align}
		and the proof of \eqref{ine:bernstein1_2} is complete. 
	\end{proof}
	
Using the lemma above, we have the key lemma (Lemma \ref{l:gc}) to prove Propositions \ref{p:cwpre}.
	For a set $K$, we define $\mr{conv}(K)$ as  its convex hull.
	Before the statement and the proof of Lemma \ref{l:gc}, we introduce the following lemma, which slightly generalizes Lemma 3.1 of \cite{PlaVer2013One} and the proof is in Appendix \ref{sec:p1}.
	\begin{lemma}
		\label{l:convl0l1}
		We have
			\begin{align}
				\label{ine:convmat}
				r_1^2 \mbb{B}^{d\times d}_1 \cap r_2^2 \mbb{B}^{d\times d}_{\mr{F}} &\subset 2 \mr{conv}\left(\left(\frac{r_1}{r_2}\right)^4 \mbb{B}^{d\times d}_0 \cap r_2^2 \mbb{B}^{d\times d}_{\mr{F}}\right),\\
				\label{ine:convvec}
				r_1 \mbb{B}^{d}_1 \cap r_2 \mbb{B}^{d}_{2} &\subset  2 \mr{conv}\left(\left(\frac{r_1}{r_2}\right)^2 \mbb{B}^d_0 \cap r_2 \mbb{B}^d_2\right).
			\end{align}
		\end{lemma}
Then, we introduce Lemma \ref{l:gc}.
	\begin{lemma}
	\label{l:gc}
	Suppose that (i) and (iii) of Assumptions \ref{a:intro} hold.  Then, with probability at least $1-\delta$, we have
		\begin{align}
			&\max_{M\in \mathfrak{M}_{r_1,r_2,d}^{\ell_1,\mr{Tr}} }\left| \frac{1}{n}\sum_{i=1}^n\langle \vecx_i \vecx_i^\top-\Sigma, M\rangle\right| \nonumber \\
			& \hspace*{10mm} \leq C(L\kappa_{\mr{u}})^2\left(\frac{\gamma_1(s^2 \mbb{B}^{d\times d}_0 \cap r_2^2 \mbb{B}^{d\times d}_{\mr{F}},\|\cdot\|_{\mr{F}})}{n}+\frac{\gamma_2(s^2 \mbb{B}^{d\times d}_0 \cap r_2^2 \mbb{B}^{d\times d}_{\mr{F}},\|\cdot\|_{\mr{F}})}{\sqrt{n}}+r_\delta^2r^2_2+r_\delta r_2^2\right).
		\end{align}
	\end{lemma}
	
	\begin{proof}
		First,  we have
		\begin{align}
			\mathfrak{M}_{r_1,r_2,d}^{\ell_1,\mr{Tr}}\subset r_1^2 \mbb{B}^{d\times d}_1 \cap r_2^2 \mbb{B}^{d\times d}_{\mr{F}} \stackrel{(a)}{\subset} 2 \mr{conv}(s^2 \mbb{B}^{d\times d}_0 \cap r_2^2 \mbb{B}^{d\times d}_{\mr{F}}),
	\end{align}
	where (a) follows from $r_1^4/r_2^4 \leq c_{r_1}^4s^2/c_{r_2}^4$, $c_{r_1}/c_{r_2}\leq 1$ and Lemma \ref{l:convl0l1}, identifying vectors with matrices. Then, we have
	\begin{align}
		&\max_{M\in \mathfrak{M}_{r_1,r_2,d}^{\ell_1,\mr{Tr}}}\left| \frac{1}{n}\sum_{i=1}^n\langle \vecx_i \vecx_i^\top-\Sigma, M\rangle\right|\nonumber\\
		& \hspace*{10mm} \leq \max_{M\in  r_1^2 \mbb{B}^{d\times d}_1 \cap r_2^2 \mbb{B}^{d\times d}_{\mr{F}}}\left| \frac{1}{n}\sum_{i=1}^n\langle \vecx_i \vecx_i^\top-\Sigma, M\rangle\right|\leq 		\max_{M\in 2 \mr{conv}(s^2 \mbb{B}^{d\times d}_0 \cap r_2^2 \mbb{B}^{d\times d}_{\mr{F}})}\left| \frac{1}{n}\sum_{i=1}^n\langle \vecx_i \vecx_i^\top-\Sigma, M\rangle\right|.
	\end{align}
	Identifying vectors with matrices, from Lemma D.8 of \cite{Oym2018Learning}, we have
	\begin{align}
	\max_{M\in 2 \mr{conv}(s^2 \mbb{B}^{d\times d}_0 \cap r_2^2 \mbb{B}^{d\times d}_{\mr{F}})}\left| \frac{1}{n}\sum_{i=1}^n\langle \vecx_i \vecx_i^\top-\Sigma, M\rangle\right|\leq 	\max_{M\in  s^2 \mbb{B}^{d\times d}_0 \cap r_2^2 \mbb{B}^{d\times d}_{\mr{F}}}\left| \frac{2}{n}\sum_{i=1}^n\langle \vecx_i \vecx_i^\top-\Sigma, M\rangle\right|.
	\end{align}

		Next, note that the upper bound of the first inequality in Lemma~\ref{prop:twotwo} is independent of $i$ and $M$. Hence, from Corollary 5.2 of \cite{Dir2015Tail}, with probability at least $1-\delta$, from the same argument used to have \eqref{ine:sparsepsi1}, we have
		\begin{align}
		\label{Cor52}
			\lefteqn{ \max_{M\in\mathfrak{M}_{r_1,r_2,d}^{\ell_1,\mr{Tr}} }\left|\frac{1}{n}\sum_{i=1}^n\langle \vecx_i \vecx_i^\top-\Sigma, M\rangle \right|} \\
			& \hspace*{10mm} \leq C\left(\frac{\gamma_1(s^2 \mbb{B}^{d\times d}_0 \cap r_2^2 \mbb{B}^{d\times d}_{\mr{F}},d_1)}{n}+\frac{\gamma_2(s^2 \mbb{B}^{d\times d}_0 \cap r_2^2 \mbb{B}^{d\times d}_{\mr{F}},d_2)}{\sqrt{n}}+(L\kappa_{\mr{u}})^2r_\delta r^2_2+(L\kappa_{\mr{u}})^2r_\delta^2 r_2^2\right), \nonumber
		\end{align}
		where the semi-metrics $d_1(M,M')$ and $d_2(M,M')$ for $M,M'\in s^2 \mbb{B}^{d\times d}_0 \cap r_2^2 \mbb{B}^{d\times d}_{\mr{F}}$ are defined in \cite{Dir2015Tail}. Since $\{\vecx_i\}_{i=1}^n$ is an i.i.d. sequence, two semi-metrics are the same and then, for any $i\in\{1,\cdots,n\}$, we have
		\begin{align}
			d_1(M,M') = d_2(M,M') = \| \langle \vecx_i \vecx_i^\top, M-M'\rangle \|_{\psi_1}\leq  C (L\kappa_{\mr{u}})^2  \|M-M'\|_{\mr{F}},
		\end{align}
		where the last inequality follows from \eqref{ine:bernstein1_2}. 
		We know that $ \gamma_1(s^2 \mbb{B}^{d\times d}_0 \cap r_2^2 \mbb{B}^{d\times d}_{\mr{F}},d_1)$ and $ \gamma_2(s^2 \mbb{B}^{d\times d}_0 \cap r_2^2 \mbb{B}^{d\times d}_{\mr{F}},d_2)$ is monotone increasing with respect to $d_1$ and $d_2$, and for some constants $\mathfrak{c}_1,\mathfrak{c}_2$, we have
		\begin{align}
		\gamma_1(s^2 \mbb{B}^{d\times d}_0 \cap r_2^2 \mbb{B}^{d\times d}_{\mr{F}},\mathfrak{c}_1d_1)&=\mathfrak{c}_1\gamma_1(s^2 \mbb{B}^{d\times d}_0 \cap r_2^2 \mbb{B}^{d\times d}_{\mr{F}},d_1),\nonumber\\ 
		\gamma_2(s^2 \mbb{B}^{d\times d}_0 \cap r_2^2 \mbb{B}^{d\times d}_{\mr{F}},\mathfrak{c}_2d_2)&=\mathfrak{c}_2\gamma_2(s^2 \mbb{B}^{d\times d}_0 \cap r_2^2 \mbb{B}^{d\times d}_{\mr{F}},d_2).
		\end{align}
		Combining \eqref{Cor52} with the above properties of the $\gamma_\alpha$-functional, we have
		\begin{align}
			\lefteqn{ \max_{M\in\mathfrak{M}_{r_1,r_2,d}^{\ell_1,\mr{Tr}} }\left|\frac{1}{n}\sum_{i=1}^n\langle \vecx_i \vecx_i^\top-\Sigma, M\rangle \right| } \\
	 		& \hspace*{10mm} \leq C(L\kappa_{\mr{u}})^2\left(\frac{\gamma_1(s^2 \mbb{B}^{d\times d}_0 \cap r_2^2 \mbb{B}^{d\times d}_{\mr{F}},\|\cdot\|_{\mr{F}})}{n}+\frac{\gamma_2(s^2 \mbb{B}^{d\times d}_0 \cap r_2^2 \mbb{B}^{d\times d}_{\mr{F}},\|\cdot\|_{\mr{F}})}{\sqrt{n}}+r_\delta^2r^2_2+r_\delta r_2^2\right), \nonumber
		\end{align}
		and the proof is complete.
	\end{proof}

	\subsection{Proof of Proposition \ref{p:cwpre}}
	First, we prove \eqref{ine:cwpre'} in Proposition \ref{p:cwpre} via Lemma \ref{l:tg-eval}, that is necessary to calculate  $\gamma_1(s^2 \mbb{B}^{d\times d}_0 \cap r_2^2 \mbb{B}^{d\times d}_{\mr{F}},\|\cdot\|_{\mr{F}})$ and $\gamma_2(s^2 \mbb{B}^{d\times d}_0 \cap r_2^2 \mbb{B}^{d\times d}_{\mr{F}},\|\cdot\|_{\mr{F}})$.
	They are provided in the following lemma, that is proved  in the  Appendix. 
	\begin{lemma}
	\label{l:tg-eval}
		Suppose that (i) and (ii) of Assumption \ref{a:intro} holds.  Then, we have 
		\begin{align}
			\gamma_1(s^2 \mbb{B}^{d\times d}_0 \cap r_2^2 \mbb{B}^{d\times d}_{\mr{F}},\|\cdot\|_{\mr{F}}) &\leq Cs^2r_2^2 \log (d/s)\nonumber \\
			\gamma_2(s^2 \mbb{B}^{d\times d}_0 \cap r_2^2 \mbb{B}^{d\times d}_{\mr{F}},\|\cdot\|_{\mr{F}}) &\leq Csr_2^2 \sqrt{\log (d/s)},
		\end{align}
	\end{lemma}
	First, we prove \eqref{ine:cwpre'}. 
	From  Lemmas \ref{l:gc} and \ref{l:tg-eval}, we have, with probability at least $1-\delta$,
	\begin{align}
	\label{cons}
	\max_{M\in \mathfrak{M}_{r_1,r_2,d}^{\ell_1,\mr{Tr}} }\left| \frac{1}{n}\sum_{i=1}^n\langle \vecx_i \vecx_i^\top-\Sigma, M\rangle\right| &\leq C(L\kappa_{\mr{u}})^2\left(\frac{s^2r_2^2 \log (d/s)}{n}+\frac{sr_2^2 \sqrt{\log (d/s)}}{\sqrt{n}}+r_\delta^2r^2_2+r_\delta r_2^2\right)\\
	\label{cons2}
	&\stackrel{(a)}{\leq} C(L\kappa_{\mr{u}})^2\left(sr_{d,s}+r_\delta \right)r_2^2,
	\end{align}
	where (a) follows from $sr_{d,s},r_\delta\leq 1$.

	Next, we prove \eqref{ine:cwpre2}. Define 
	\begin{align}
		\label{ine:weight}
		w_i^\circ = \begin{cases}
		\frac{1}{n(1-o/n)}&i\in\mc{I} \\
		0 & i\in\mc{O}
		\end{cases}.
	\end{align}
	From the optimality of $\{\hat{w}_i\}_{i=1}^n$ and the fact that $\{w_i^\circ\}_{i=1}^n \in\Delta^{n-1}(\varepsilon)$ and the definition of $\vecX_i$,
	we have
	\begin{align}
	\label{ine:optM0-pre}
		\max_{M\in \mathfrak{M}_{r_1,r_2,d}^{\ell_1,\mr{Tr}} }\sum_{i =1 }^n \hat{w}_i\langle \vecX_i\vecX_i^\top-\Sigma, M\rangle 
		&\leq \max_{M\in \mathfrak{M}_{r_1,r_2,d}^{\ell_1,\mr{Tr}} }\sum_{i =1 }^n w_i^\circ \langle \vecX_i\vecX_i^\top-\Sigma, M\rangle \nonumber\\
		&= \max_{M\in \mathfrak{M}_{r_1,r_2,d}^{\ell_1,\mr{Tr}} }\sum_{i =1 }^n w_i^\circ \langle \vecx_i\vecx_i^\top-\Sigma, M\rangle\nonumber \\
		&\leq \max_{M\in \mathfrak{M}_{r_1,r_2,d}^{\ell_1,\mr{Tr}} }\left|\sum_{i =1 }^n w_i^\circ \langle \vecx_i\vecx_i^\top-\Sigma, M\rangle\right|.
	\end{align}
	From triangular inequality and the fact that $o/n\leq 1/2$, we have
	\begin{align}
	\label{ine:optM0-pre2}
		\left|\sum_{i =1 }^n w_i^\circ \langle \vecx_i\vecx_i^\top-\Sigma, M\rangle\right| &=\frac{1}{1-o/n}\left|\sum_{i =1 }^n \frac{\langle \vecx_i\vecx_i^\top-\Sigma, M\rangle}{n}-\sum_{i \in \mc{O}} \frac{\langle \vecx_i\vecx_i^\top-\Sigma, M\rangle}{n} \right|\nonumber\\
		&\leq 2\left|\sum_{i =1 }^n \frac{\langle \vecx_i\vecx_i^\top-\Sigma, M\rangle}{n}-\sum_{i \in \mc{O}} \frac{\langle \vecx_i\vecx_i^\top-\Sigma, M\rangle}{n} \right|\nonumber\\
		&\leq 2\left|\sum_{i =1 }^n \frac{\langle \vecx_i\vecx_i^\top-\Sigma, M\rangle}{n}\right|+2\left|\sum_{i \in \mc{O}} \frac{\langle \vecx_i\vecx_i^\top-\Sigma, M\rangle}{n} \right|.
	\end{align}
	From \eqref{ine:optM0-pre} and \eqref{ine:optM0-pre2}, we have
	\begin{align}
	\label{ine:optM0}
		&\max_{M\in \mathfrak{M}_{r_1,r_2,d}^{\ell_1,\mr{Tr}} }\sum_{i =1 }^n \hat{w}_i\langle \vecX_i\vecX_i^\top-\Sigma, M\rangle \nonumber \\
		& \hspace*{10mm} \leq 2 \max_{M\in \mathfrak{M}_{r_1,r_2,d}^{\ell_1,\mr{Tr}} }\left(\left|\sum_{i =1 }^n \frac{\langle \vecx_i\vecx_i^\top-\Sigma, M\rangle}{n}\right|+\left|\sum_{i \in \mc{O}} \frac{\langle \vecx_i\vecx_i^\top-\Sigma, M\rangle}{n} \right|\right)\nonumber \\
		& \hspace*{10mm} \leq 2 \left(\max_{M\in \mathfrak{M}_{r_1,r_2,d}^{\ell_1,\mr{Tr}} }\left|\sum_{i =1 }^n \frac{\langle \vecx_i\vecx_i^\top-\Sigma, M\rangle}{n}\right|+\max_{M\in \mathfrak{M}_{r_1,r_2,d}^{\ell_1,\mr{Tr}} }\left|\sum_{i \in \mc{O}} \frac{\langle \vecx_i\vecx_i^\top-\Sigma, M\rangle}{n} \right|\right)\nonumber \\
		& \hspace*{10mm} \leq 2\left(\max_{M\in \mathfrak{M}_{r_1,r_2,d}^{\ell_1,\mr{Tr}} }\left|\sum_{i =1 }^n \frac{\langle \vecx_i\vecx_i^\top-\Sigma, M\rangle}{n}\right|+\max_{|\mc{J}|=o}\max_{M\in \mathfrak{M}_{r_1,r_2,d}^{\ell_1,\mr{Tr}} }\left|\sum_{i \in \mc{J}}\frac{ \langle \vecx_i\vecx_i^\top-\Sigma, M\rangle}{n}\right|\right),
	\end{align}
	We evaluate the last term of \eqref{ine:optM0} in a  manner similar to the proof of Lemma 5 of \cite{DalMin2022All}.
	From \eqref{cons}, we have
	\begin{align}
	\label{ine:optM3}
	 	\max_{|\mc{J}|=o}\max_{M\in \mathfrak{M}_{r_1,r_2,d}^{\ell_1,\mr{Tr}} }\left|\sum_{i \in \mc{J}} \frac{\langle \vecx_i\vecx_i^\top-\Sigma, M\rangle}{o} \right|\geq C(L\kappa_{\mr{u}})^2\left(s^2\frac{\log  (d/s)}{o}+s\sqrt{\frac{\log  (d/s)}{o}}+\sqrt{\frac{t}{o}}+\frac{t}{o}\right)r_2^2,
	\end{align}
	with probability at least 
	\begin{align}
		&\leq \binom{n}{o}\times \nonumber \\
		&\mbb{P}\left[\max_{M\in \mathfrak{M}_{r_1,r_2,d}^{\ell_1,\mr{Tr}} }\left|\sum_{i =1}^o \frac{\langle \vecz_i\vecz_i^\top-\Sigma, M\rangle}{o} \right|\geq C(L\kappa_{\mr{u}})^2\left(s^2\frac{\log  (d/s)}{o}+s\sqrt{\frac{\log (d/s)}{o}}+\sqrt{\frac{t}{o}}+\frac{t}{o}\right)r_2^2 \right]\nonumber \\
		&\leq \binom{n}{o} e^{-t},
	\end{align}
	where $\{\vecz_i\}_{i=1}^o$ is a sequence of i.i.d. random vectors sampled from the same distribution as $\{\vecx_i\}_{i=1}^n$. 
	Let $t = o\log (ne/o)+\log(1/\delta)$. We have
	\[ 
		\binom{n}{o} e^{-t} \le \frac{\prod_{k=0}^{o-1} (n-k)}{o!} \left(\frac{o}{ne}\right)^o \delta = \prod_{k=0}^{o-1} \frac{n-k}{n}\frac{o^o}{o! e^o} \delta \le \delta,
	\]
	where the last inequality follows from Stirling's formula $o^o \le o! e^o$. From \eqref{ine:optM3}, with probability at least $1-\delta$, we have
	\begin{align}
	\label{ine:optM4}
		\lefteqn{ \max_{|\mc{J}|=o}\max_{M\in \mathfrak{M}_{r_1,r_2,d}^{\ell_1,\mr{Tr}} }\left|\sum_{i \in \mc{J}} \frac{1}{n} \langle \vecx_i\vecx_i^\top-\Sigma, M\rangle \right| } \nonumber \\
		& \hspace*{3mm}  \leq C \frac{o}{n}(L\kappa_{\mr{u}})^2\left( s^2\frac{\log (d/s)}{o}+s\sqrt{\frac{\log (d/s)}{o}}+\sqrt{\frac{o\log (ne/o)+\log(1/\delta)}{o}}+\frac{o\log (ne/o)+\log(1/\delta)}{o}\right)r_2^2.
	\end{align}
	From $e \le n/o$ and $r_\delta \le 1$, we have
	\begin{align}
		\frac{o\log (ne/o)+\log(1/\delta)}{n} &\le 2 \frac{o}{n}\log(n/o) + \frac{1}{n}\log(1/\delta) =2 r_o' + r_\delta^2 \le 2 r_o' + r_\delta, \nonumber\\
		\sqrt{\frac{o}{n}}\sqrt{\frac{o\log (ne/o)+\log(1/\delta)}{n}} & \le \sqrt{\frac{o}{n}} \sqrt{2 \frac{o}{n}\log(n/o)} + \sqrt{\frac{o}{n}} \sqrt{\frac{1}{n}\log(1/\delta)} \le 2 r_o' + r_\delta. 
	\end{align}
	Combining \eqref{ine:optM4} with the above two inequalities, with probability at least $1-\delta$, we have
	\begin{align}
	\label{ine:optM5}
 		\max_{|\mc{J}|=o}\max_{M\in \mathfrak{M}_{r_1,r_2,d}^{\ell_1,\mr{Tr}} }\left|\sum_{i \in \mc{J}} \frac{1}{n} \langle \vecx_i\vecx_i^\top-\Sigma, M\rangle \right| 
 		&\leq C (L\kappa_{\mr{u}})^2\left(s^2\frac{\log (d/s)}{n}+s\sqrt{\frac{\log (d/s)}{n}}\sqrt{\frac{o}{n}}+4r_o'+2r_\delta\right)r_2^2\nonumber\\
		& \stackrel{(a)}{\leq} C (L\kappa_{\mr{u}})^2\left(2s^2\frac{\log (d/s)}{n}+\frac{o}{n}+4r_o'+2r_\delta\right)r_2^2 \nonumber \\
		& \stackrel{(b)}{\leq} C (L\kappa_{\mr{u}})^2\left(s^2\frac{\log (d/s)}{n}+5r_o'+2r_\delta\right)r_2^2,
	\end{align}
	where (a) follows from Young's inequality, and  (b) follows from $0<o/n\leq 1/(5e)$ and $o/n\leq r_o'$.
	Combining  \eqref{cons}, \eqref{ine:optM0} and \eqref{ine:optM5}, with probability at least $1-2\delta$, we have
	\begin{align}
	\label{ine:unknownmainlemma}
		\max_{M\in \mathfrak{M}_{r_1,r_2,d}^{\ell_1,\mr{Tr}} }\sum_{i =1 }^n \hat{w}_i\langle \vecX_i\vecX_i^\top-\Sigma, M\rangle 
		&\leq C(L\kappa_{\mr{u}})^2\left(sr_{d,s}+r_\delta +s^2\frac{\log (d/s)}{n}+5r_o'+2r_\delta\right)r_2^2\nonumber\\
		&\stackrel{(a)}{\leq} C(L\kappa_{\mr{u}})^2\left(sr_{d,s}+r_o'+r_\delta\right)r_2^2,
	\end{align}
	where (a) follows from  $sr_{d,s}\leq 1$,
	and the proof is complete.

	\subsection{Proof of  Proposition \ref{p:main1}}
		In this proof, we define $c_1$ and $c_2$ as the numerical constants that are the same ones of Theorem 4.4 of \cite{Men2016Upper}.
		Applying Theorem 4.4 of \cite{Men2016Upper},  with probability at least $1-2\exp(-c_1u^2 2^{s_0}) - 2\exp(-c_1 w^2 n)$, we have 
		 \begin{align}
		 \label{gc-pre}
			 \sup_{\vecv \in r_1 \mbb{B}^d_1 \cap r_\Sigma \mbb{B}^d_\Sigma }\left|\frac{1}{n} \sum_{i=1}^n \langle h(\xi_{\lambda_o,i})\vecx_i,\vecv\rangle \right|&\leq c_2uw\frac{\|h\left(\xi_{\lambda_o,i}\right)\|_{\psi_2}}{\sqrt{n}}\tilde{\Lambda}_{s_0,u}\left(\langle \cdot, \vecv\rangle,\,\vecv \in  r_1 \mbb{B}^d_1 \cap r_\Sigma \mbb{B}^d_\Sigma\right)\nonumber\\
			 &\stackrel{(a)}{\leq} c_2uw\frac{1}{\sqrt{n}}\tilde{\Lambda}_{s_0,u}\left(\langle \cdot, \vecv\rangle,\,\vecv \in  r_1 \mbb{B}^d_1 \cap r_\Sigma \mbb{B}^d_\Sigma\right)\nonumber\\
			 &\stackrel{(b)}{\leq } CL\frac{c_2uw}{\sqrt{n}}\left(\sup_{\vecv \in r_1 \mbb{B}^d_1 \cap r_\Sigma \mbb{B}^d_\Sigma } \langle \Sigma^\frac{1}{2}\vecg, \vecv\rangle+2^\frac{s_0}{2}\sup_{\vecv \in r_1 \mbb{B}^d_1 \cap r_\Sigma \mbb{B}^d_\Sigma}\|\vecv^\top \vecx\|_{L_2} \right)\nonumber\\
			 &\leq C \frac{L}{\sqrt{n}}\left(\mbb{E}\sup_{\vecv \in r_1 \mbb{B}^d_1 \cap r_\Sigma \mbb{B}^d_\Sigma } \langle \Sigma^\frac{1}{2}\vecg, \vecv\rangle+2^\frac{s_0}{2} r_\Sigma \right),
		 \end{align}
		 where (a) follows from  $\|h\left(\xi_{\lambda_o,i}\right)\|_{\psi_2}\leq 1$, and (b) follows from the same argument of Section 1 of \cite{Men2016Upper}.
	
		 Define $s'$ as the minimum integer such that $2^{s'}\geq 2\log(1/\delta)$. Set $s_0, w$ and $u$ such that $s_0=s'$ and $c_1u^2, c_1 w^2 \geq 1$. Then, from Lemma \ref{l:gwslope:main}, we have
		 \begin{align}
			\label{gc-pre5}
				\sup_{\vecv \in r_1 \mbb{B}^d_1 \cap r_\Sigma \mbb{B}^d_\Sigma }\left|\frac{1}{n} \sum_{i=1}^n \langle h(\xi_{\lambda_o,i})\vecx_i,\vecv\rangle \right| 
				&\stackrel{(a)}{\leq} C \frac{L}{\sqrt{n}}\left(\rho r_1\sqrt{ \log (d/s) }+\sqrt{\log(1/\delta)} r_\Sigma \right),
			\end{align}
		 with probability at least 
		 \begin{align}
				1-2\exp(-c_1u^2 2^{s_0}) - 2\exp(-c_1 w^2 n) \geq 1-2\exp(-2^{s_0}) - 2\exp(-n) \geq 1-\delta,
			\end{align}
			where we use  $\sqrt{2}r_\delta\leq 1$.

\section{Acknowledgement}
	This work was supported by JSPS KAKENHI Grant Number 17K00065.
	\vskip 0.2in

\bibliography{ORSELRC} 

\begin{thebibliography}{77}
\providecommand{\natexlab}[1]{#1}
\providecommand{\url}[1]{\texttt{#1}}
\expandafter\ifx\csname urlstyle\endcsname\relax
  \providecommand{\doi}[1]{doi: #1}\else
  \providecommand{\doi}{doi: \begingroup \urlstyle{rm}\Url}\fi

\bibitem[Adamczak(2015)]{Ada2015Note}
R.~Adamczak.
\newblock A note on the hanson-wright inequality for random vectors with
  dependencies.
\newblock \emph{Electronic Communications in Probability}, 20:\penalty0 1--13,
  2015.

\bibitem[Alquier et~al.(2019)Alquier, Cottet, and
  Lecu{\'e}]{AlqCotLec2019Estimation}
P.~Alquier, V.~Cottet, and G.~Lecu{\'e}.
\newblock Estimation bounds and sharp oracle inequalities of regularized
  procedures with lipschitz loss functions.
\newblock \emph{The Annals of Statistics}, 47\penalty0 (4):\penalty0
  2117--2144, 2019.

\bibitem[Bakshi and Prasad(2021)]{BakPra2021Robust}
A.~Bakshi and A.~Prasad.
\newblock Robust linear regression: Optimal rates in polynomial time.
\newblock In \emph{Proceedings of the 53rd Annual ACM SIGACT Symposium on
  Theory of Computing}, pages 102--115, 2021.

\bibitem[Balakrishnan et~al.(2017)Balakrishnan, Du, Li, and
  Singh]{BalDuLiSin2017Computationally}
S.~Balakrishnan, S.~S. Du, J.~Li, and A.~Singh.
\newblock Computationally efficient robust sparse estimation in high
  dimensions.
\newblock In \emph{Conference on Learning Theory}, pages 169--212. PMLR, 2017.

\bibitem[Bellec(2019)]{Bel2019Localized}
P.~C. Bellec.
\newblock Localized gaussian width of $ m $-convex hulls with applications to
  lasso and convex aggregation.
\newblock \emph{Bernoulli}, 25\penalty0 (4A):\penalty0 3016--3040, 2019.

\bibitem[Bellec et~al.(2018)Bellec, Lecu{\'e}, and
  Tsybakov]{BelLecTsy2018Slope}
P.~C. Bellec, G.~Lecu{\'e}, and A.~B. Tsybakov.
\newblock Slope meets lasso: improved oracle bounds and optimality.
\newblock \emph{The Annals of Statistics}, 46\penalty0 (6B):\penalty0
  3603--3642, 2018.

\bibitem[Belloni et~al.(2011)Belloni, Chernozhukov, and
  Wang]{BelCheWan2011Square}
A.~Belloni, V.~Chernozhukov, and L.~Wang.
\newblock Square-root lasso: pivotal recovery of sparse signals via conic
  programming.
\newblock \emph{Biometrika}, 98\penalty0 (4):\penalty0 791--806, 2011.

\bibitem[Bickel et~al.(2009)Bickel, Ritov, and
  Tsybakov]{BicRItTsy2009Simultaneous}
P.~J. Bickel, Y.~Ritov, and A.~B. Tsybakov.
\newblock Simultaneous analysis of lasso and dantzig selector.
\newblock \emph{The Annals of Statistics}, 37\penalty0 (4):\penalty0
  1705--1732, 2009.

\bibitem[Boucheron et~al.(2013)Boucheron, Lugosi, and
  Massart]{BouLugMas2013concentration}
S.~Boucheron, G.~Lugosi, and P.~Massart.
\newblock \emph{Concentration inequalities: A nonasymptotic theory of
  independence}.
\newblock Oxford university press, 2013.

\bibitem[B{\"u}hlmann and Van De~Geer(2011)]{BulGee2011Statistics}
P.~B{\"u}hlmann and S.~Van De~Geer.
\newblock \emph{Statistics for high-dimensional data: methods, theory and
  applications}.
\newblock Springer Science \& Business Media, 2011.

\bibitem[Candes and Tao(2007)]{CanTao2007Dantzig}
E.~Candes and T.~Tao.
\newblock The dantzig selector: Statistical estimation when p is much larger
  than n.
\newblock \emph{The Annals of Statistics}, 35\penalty0 (6):\penalty0
  2313--2351, 2007.

\bibitem[Chen et~al.(2018)Chen, Gao, and Ren]{CheGaoRen2018robust}
M.~Chen, C.~Gao, and Z.~Ren.
\newblock Robust covariance and scatter matrix estimation under {H}uber's
  contamination model.
\newblock \emph{The Annals of Statistics}, 46\penalty0 (5):\penalty0
  1932--1960, 2018.

\bibitem[Chen and Zhou(2020)]{CheZho2020Robust}
X.~Chen and W.-X. Zhou.
\newblock Robust inference via multiplier bootstrap.
\newblock \emph{The Annals of Statistics}, 48\penalty0 (3):\penalty0
  1665--1691, 2020.

\bibitem[Chen et~al.(2013)Chen, Caramanis, and Mannor]{CheCarMan2013Robust}
Y.~Chen, C.~Caramanis, and S.~Mannor.
\newblock Robust sparse regression under adversarial corruption.
\newblock In \emph{International Conference on Machine Learning}, pages
  774--782. PMLR, 2013.

\bibitem[Cheng et~al.(2019{\natexlab{a}})Cheng, Diakonikolas, and
  Ge]{CheDiaGe2019High}
Y.~Cheng, I.~Diakonikolas, and R.~Ge.
\newblock High-dimensional robust mean estimation in nearly-linear time.
\newblock In \emph{Proceedings of the Thirtieth Annual ACM-SIAM Symposium on
  Discrete Algorithms}, pages 2755--2771. SIAM, 2019{\natexlab{a}}.

\bibitem[Cheng et~al.(2019{\natexlab{b}})Cheng, Diakonikolas, Ge, and
  Woodruff]{CheDiaGeWoo2019Faster}
Y.~Cheng, I.~Diakonikolas, R.~Ge, and D.~Woodruff.
\newblock Faster algorithms for high-dimensional robust covariance estimation.
\newblock \emph{arXiv preprint arXiv:1906.04661}, 2019{\natexlab{b}}.

\bibitem[Cherapanamjeri et~al.(2020)Cherapanamjeri, Aras, Tripuraneni, Jordan,
  Flammarion, and Bartlett]{CheAraTriJorFlaBar2020Optimal}
Y.~Cherapanamjeri, E.~Aras, N.~Tripuraneni, M.~I. Jordan, N.~Flammarion, and
  P.~L. Bartlett.
\newblock Optimal robust linear regression in nearly linear time.
\newblock \emph{arXiv preprint arXiv:2007.08137}, 2020.

\bibitem[Chinot(2020)]{Chi2020Erm}
G.~Chinot.
\newblock Erm and rerm are optimal estimators for regression problems when
  malicious outliers corrupt the labels.
\newblock \emph{Electronic Journal of Statistics}, 14\penalty0 (2):\penalty0
  3563--3605, 2020.

\bibitem[Chinot et~al.(2020)Chinot, Lecu{\'e}, and
  Lerasle]{GeoLecLer2020Robust}
G.~Chinot, G.~Lecu{\'e}, and M.~Lerasle.
\newblock Robust high dimensional learning for lipschitz and convex losses.
\newblock \emph{Journal of Machine Learning Research}, 21:\penalty0 233, 2020.

\bibitem[Dalalyan and Chen(2012)]{DalChe2012Fused}
A.~Dalalyan and Y.~Chen.
\newblock Fused sparsity and robust estimation for linear models with unknown
  variance.
\newblock \emph{Advances in Neural Information Processing Systems}, 25, 2012.

\bibitem[Dalalyan and Thompson(2019)]{DalTho2019Outlier}
A.~Dalalyan and P.~Thompson.
\newblock Outlier-robust estimation of a sparse linear model using
  $\ell_1$-penalized {H}uber's m-estimator.
\newblock In H.~Wallach, H.~Larochelle, A.~Beygelzimer, F.~d'Alch\'{e} Buc,
  E.~Fox, and R.~Garnett, editors, \emph{Advances in Neural Information
  Processing Systems 32}, pages 13188--13198. Curran Associates, Inc., 2019.

\bibitem[Dalalyan and Minasyan(2022)]{DalMin2022All}
A.~S. Dalalyan and A.~Minasyan.
\newblock All-in-one robust estimator of the gaussian mean.
\newblock \emph{The Annals of Statistics}, 50\penalty0 (2):\penalty0
  1193--1219, 2022.

\bibitem[Depersin and Lecu{\'e}(2022)]{DepLec2022Robust}
J.~Depersin and G.~Lecu{\'e}.
\newblock Robust sub-gaussian estimation of a mean vector in nearly linear
  time.
\newblock \emph{The Annals of Statistics}, 50\penalty0 (1):\penalty0 511--536,
  2022.

\bibitem[Derumigny(2018)]{Der2018Improved}
A.~Derumigny.
\newblock Improved bounds for square-root lasso and square-root slope.
\newblock \emph{Electronic Journal of Statistics}, 12\penalty0 (1):\penalty0
  741--766, 2018.

\bibitem[Diakonikolas and Kane(2019)]{DiaKan2019Recent}
I.~Diakonikolas and D.~M. Kane.
\newblock Recent advances in algorithmic high-dimensional robust statistics.
\newblock \emph{arXiv preprint arXiv:1911.05911}, 2019.

\bibitem[Diakonikolas et~al.(2017{\natexlab{a}})Diakonikolas, Kamath, Kane, Li,
  Moitra, and Stewart]{DiaKanKanLiMoiSte2017Being}
I.~Diakonikolas, G.~Kamath, D.~M. Kane, J.~Li, A.~Moitra, and A.~Stewart.
\newblock Being robust (in high dimensions) can be practical.
\newblock In \emph{Proceedings of the 34th International Conference on Machine
  Learning-Volume 70}, pages 999--1008. JMLR. org, 2017{\natexlab{a}}.

\bibitem[Diakonikolas et~al.(2017{\natexlab{b}})Diakonikolas, Kane, and
  Stewart]{DiaKanSte2017Statistical}
I.~Diakonikolas, D.~M. Kane, and A.~Stewart.
\newblock Statistical query lower bounds for robust estimation of
  high-dimensional gaussians and gaussian mixtures.
\newblock In \emph{2017 IEEE 58th Annual Symposium on Foundations of Computer
  Science (FOCS)}, pages 73--84. IEEE, 2017{\natexlab{b}}.

\bibitem[Diakonikolas et~al.(2018)Diakonikolas, Kamath, Kane, Li, Moitra, and
  Stewart]{DiaKamKanLiMoiSte2018Robustly}
I.~Diakonikolas, G.~Kamath, D.~M. Kane, J.~Li, A.~Moitra, and A.~Stewart.
\newblock Robustly learning a gaussian: Getting optimal error, efficiently.
\newblock In \emph{Proceedings of the Twenty-Ninth Annual ACM-SIAM Symposium on
  Discrete Algorithms}, pages 2683--2702. Society for Industrial and Applied
  Mathematics, 2018.

\bibitem[Diakonikolas et~al.(2019{\natexlab{a}})Diakonikolas, Gouleakis, and
  Tzamos]{DiaGouTza2019Distribution}
I.~Diakonikolas, T.~Gouleakis, and C.~Tzamos.
\newblock Distribution-independent pac learning of halfspaces with massart
  noise.
\newblock \emph{Advances in Neural Information Processing Systems}, 32,
  2019{\natexlab{a}}.

\bibitem[Diakonikolas et~al.(2019{\natexlab{b}})Diakonikolas, Kamath, Kane, Li,
  Moitra, and Stewart]{DiaKanKanLiMoiSte2019Robust}
I.~Diakonikolas, G.~Kamath, D.~Kane, J.~Li, A.~Moitra, and A.~Stewart.
\newblock Robust estimators in high-dimensions without the computational
  intractability.
\newblock \emph{SIAM Journal on Computing}, 48\penalty0 (2):\penalty0 742--864,
  2019{\natexlab{b}}.

\bibitem[Diakonikolas et~al.(2019{\natexlab{c}})Diakonikolas, Kane, Karmalkar,
  Price, and Stewart]{DiaKanKarPriSte2019Outlier}
I.~Diakonikolas, D.~Kane, S.~Karmalkar, E.~Price, and A.~Stewart.
\newblock Outlier-robust high-dimensional sparse estimation via iterative
  filtering.
\newblock In \emph{Advances in Neural Information Processing Systems}, pages
  10689--10700, 2019{\natexlab{c}}.

\bibitem[Diakonikolas et~al.(2019{\natexlab{d}})Diakonikolas, Kong, and
  Stewart]{DiaKonSte2019Efficient}
I.~Diakonikolas, W.~Kong, and A.~Stewart.
\newblock Efficient algorithms and lower bounds for robust linear regression.
\newblock In \emph{Proceedings of the Thirtieth Annual ACM-SIAM Symposium on
  Discrete Algorithms}, pages 2745--2754. SIAM, 2019{\natexlab{d}}.

\bibitem[Diakonikolas et~al.(2020{\natexlab{a}})Diakonikolas, Kane, and
  Manurangsi]{DiaKanMan2020Complexity}
I.~Diakonikolas, D.~M. Kane, and P.~Manurangsi.
\newblock The complexity of adversarially robust proper learning of halfspaces
  with agnostic noise.
\newblock \emph{Advances in Neural Information Processing Systems},
  33:\penalty0 20449--20461, 2020{\natexlab{a}}.

\bibitem[Diakonikolas et~al.(2020{\natexlab{b}})Diakonikolas, Kontonis, Tzamos,
  and Zarifis]{DiaKonTzaZaf2020Learning}
I.~Diakonikolas, V.~Kontonis, C.~Tzamos, and N.~Zarifis.
\newblock Learning halfspaces with massart noise under structured
  distributions.
\newblock In \emph{Conference on Learning Theory}, pages 1486--1513. PMLR,
  2020{\natexlab{b}}.

\bibitem[Diakonikolas et~al.(2021)Diakonikolas, Kane, Kontonis, Tzamos, and
  Zarifis]{DiaKanKonTzaZaf2021Efficiently}
I.~Diakonikolas, D.~M. Kane, V.~Kontonis, C.~Tzamos, and N.~Zarifis.
\newblock Efficiently learning halfspaces with tsybakov noise.
\newblock In \emph{Proceedings of the 53rd Annual ACM SIGACT Symposium on
  Theory of Computing}, pages 88--101, 2021.

\bibitem[Dirksen(2015)]{Dir2015Tail}
S.~Dirksen.
\newblock Tail bounds via generic chaining.
\newblock \emph{Electronic Journal of Probability}, 20:\penalty0 1--29, 2015.

\bibitem[Dong et~al.(2019)Dong, Hopkins, and Li]{DonHopLi2019Quantum}
Y.~Dong, S.~Hopkins, and J.~Li.
\newblock Quantum entropy scoring for fast robust mean estimation and improved
  outlier detection.
\newblock In \emph{Advances in Neural Information Processing Systems}, pages
  6067--6077, 2019.

\bibitem[Fan and Li(2001)]{FanLi2001Variable}
J.~Fan and R.~Li.
\newblock Variable selection via nonconcave penalized likelihood and its oracle
  properties.
\newblock \emph{Journal of the American statistical Association}, 96\penalty0
  (456):\penalty0 1348--1360, 2001.

\bibitem[Fan et~al.(2017)Fan, Li, and Wang]{FanLiWan2017Estimation}
J.~Fan, Q.~Li, and Y.~Wang.
\newblock Estimation of high dimensional mean regression in the absence of
  symmetry and light tail assumptions.
\newblock \emph{Journal of the Royal Statistical Society. Series B, Statistical
  methodology}, 79\penalty0 (1):\penalty0 247, 2017.

\bibitem[Fan et~al.(2018)Fan, Liu, Sun, and Zhang]{FanLiuSunZha2018Lamm}
J.~Fan, H.~Liu, Q.~Sun, and T.~Zhang.
\newblock I-lamm for sparse learning: Simultaneous control of algorithmic
  complexity and statistical error.
\newblock \emph{Annals of statistics}, 46\penalty0 (2):\penalty0 814, 2018.

\bibitem[Fan et~al.(2021)Fan, Wang, and Zhu]{FanWanZhu2021Shrinkage}
J.~Fan, W.~Wang, and Z.~Zhu.
\newblock A shrinkage principle for heavy-tailed data: High-dimensional robust
  low-rank matrix recovery.
\newblock \emph{Annals of statistics}, 49\penalty0 (3):\penalty0 1239, 2021.

\bibitem[Gao(2020)]{Gao2020Robust}
C.~Gao.
\newblock Robust regression via mutivariate regression depth.
\newblock \emph{Bernoulli}, 26\penalty0 (2):\penalty0 1139--1170, 2020.

\bibitem[Hanson and Wright(1971)]{HanWri1971Bound}
D.~L. Hanson and F.~T. Wright.
\newblock A bound on tail probabilities for quadratic forms in independent
  random variables.
\newblock \emph{The Annals of Mathematical Statistics}, 42\penalty0
  (3):\penalty0 1079--1083, 1971.

\bibitem[Hsu et~al.(2012)Hsu, Kakade, and Zhang]{HsuKakZha2012Tail}
D.~Hsu, S.~Kakade, and T.~Zhang.
\newblock A tail inequality for quadratic forms of subgaussian random vectors.
\newblock \emph{Electronic Communications in Probability}, 17:\penalty0 1--6,
  2012.

\bibitem[Kothari et~al.(2018)Kothari, Steinhardt, and
  Steurer]{KotSteSte2018Robust}
P.~K. Kothari, J.~Steinhardt, and D.~Steurer.
\newblock Robust moment estimation and improved clustering via sum of squares.
\newblock In \emph{Proceedings of the 50th Annual ACM SIGACT Symposium on
  Theory of Computing}, pages 1035--1046. ACM, 2018.

\bibitem[Lai et~al.(2016)Lai, Rao, and Vempala]{LaiRaoVem2016Agnostic}
K.~A. Lai, A.~B. Rao, and S.~Vempala.
\newblock Agnostic estimation of mean and covariance.
\newblock In \emph{Foundations of Computer Science (FOCS), 2016 IEEE 57th
  Annual Symposium on}, pages 665--674. IEEE, 2016.

\bibitem[Lecu{\'e} and Lerasle(2020)]{LecLer2020Robust}
G.~Lecu{\'e} and M.~Lerasle.
\newblock Robust machine learning by median-of-means: theory and practice.
\newblock \emph{The Annals of Statistics}, 48\penalty0 (2):\penalty0 906--931,
  2020.

\bibitem[Lecu{\'e} and Mendelson(2018)]{LecMen2018Regularization}
G.~Lecu{\'e} and S.~Mendelson.
\newblock Regularization and the small-ball method i: sparse recovery.
\newblock \emph{The Annals of Statistics}, 46\penalty0 (2):\penalty0 611--641,
  2018.

\bibitem[Liu et~al.(2020)Liu, Shen, Li, and Caramanis]{LiuSheLiCar2020High}
L.~Liu, Y.~Shen, T.~Li, and C.~Caramanis.
\newblock High dimensional robust sparse regression.
\newblock In \emph{International Conference on Artificial Intelligence and
  Statistics}, pages 411--421. PMLR, 2020.

\bibitem[Lugosi and Mendelson(2021)]{LugMen2021Robust}
G.~Lugosi and S.~Mendelson.
\newblock Robust multivariate mean estimation: the optimality of trimmed mean.
\newblock \emph{The Annals of Statistics}, 49\penalty0 (1):\penalty0 393--410,
  2021.

\bibitem[Massart(2000)]{Mas2000Constants}
P.~Massart.
\newblock About the constants in talagrand's concentration inequalities for
  empirical processes.
\newblock \emph{The Annals of Probability}, 28\penalty0 (2):\penalty0 863--884,
  2000.

\bibitem[Mendelson(2016)]{Men2016Upper}
S.~Mendelson.
\newblock Upper bounds on product and multiplier empirical processes.
\newblock \emph{Stochastic Processes and their Applications}, 126\penalty0
  (12):\penalty0 3652--3680, 2016.

\bibitem[Merad and Ga{\"\i}ffas(2023)]{MerGai2023robust}
I.~Merad and S.~Ga{\"\i}ffas.
\newblock Robust methods for high-dimensional linear learning.
\newblock \emph{Journal of Machine Learning Research}, 24\penalty0
  (165):\penalty0 1--44, 2023.

\bibitem[Minsker et~al.(2022)Minsker, Ndaoud, and Wang]{MinMohWan2022Robust}
S.~Minsker, M.~Ndaoud, and L.~Wang.
\newblock Robust and tuning-free sparse linear regression via square-root
  slope.
\newblock \emph{arXiv preprint arXiv:2210.16808}, 2022.

\bibitem[Montasser et~al.(2020)Montasser, Goel, Diakonikolas, and
  Srebro]{MonGoeDiaSre2020Efficiently}
O.~Montasser, S.~Goel, I.~Diakonikolas, and N.~Srebro.
\newblock Efficiently learning adversarially robust halfspaces with noise.
\newblock In \emph{International Conference on Machine Learning}, pages
  7010--7021. PMLR, 2020.

\bibitem[Nguyen and Tran(2012)]{NguTra2012Robust}
N.~H. Nguyen and T.~D. Tran.
\newblock Robust lasso with missing and grossly corrupted observations.
\newblock \emph{IEEE transactions on information theory}, 59\penalty0
  (4):\penalty0 2036--2058, 2012.

\bibitem[Oymak(2018)]{Oym2018Learning}
S.~Oymak.
\newblock Learning compact neural networks with regularization.
\newblock In \emph{International Conference on Machine Learning}, pages
  3966--3975. PMLR, 2018.

\bibitem[Pensia et~al.(2020)Pensia, Jog, and Loh]{PenJogLoh2020robust}
A.~Pensia, V.~Jog, and P.-L. Loh.
\newblock Robust regression with covariate filtering: Heavy tails and
  adversarial contamination.
\newblock \emph{arXiv preprint arXiv:2009.12976}, 2020.

\bibitem[Plan and Vershynin(2013)]{PlaVer2013One}
Y.~Plan and R.~Vershynin.
\newblock One-bit compressed sensing by linear programming.
\newblock \emph{Communications on Pure and Applied Mathematics}, 66\penalty0
  (8):\penalty0 1275--1297, 2013.

\bibitem[Prasad et~al.(2020)Prasad, Balakrishnan, and
  Ravikumar]{PraBalRav2020Robust}
A.~Prasad, S.~Balakrishnan, and P.~Ravikumar.
\newblock A robust univariate mean estimator is all you need.
\newblock In \emph{International Conference on Artificial Intelligence and
  Statistics}, pages 4034--4044. PMLR, 2020.

\bibitem[Raskutti et~al.(2010)Raskutti, Wainwright, and
  Yu]{RasWaiYu2010Restricted}
G.~Raskutti, M.~J. Wainwright, and B.~Yu.
\newblock Restricted eigenvalue properties for correlated gaussian designs.
\newblock \emph{The Journal of Machine Learning Research}, 11:\penalty0
  2241--2259, 2010.

\bibitem[Rudelson and Vershynin(2013)]{RudVer2013Hanson}
M.~Rudelson and R.~Vershynin.
\newblock Hanson-wright inequality and sub-gaussian concentration.
\newblock \emph{Electronic Communications in Probability}, 18:\penalty0 1--9,
  2013.

\bibitem[Sasai(2022)]{Sas2022Robust}
T.~Sasai.
\newblock Robust and sparse estimation of linear regression coefficients with
  heavy-tailed noises and covariates.
\newblock \emph{arXiv preprint arXiv:2206.07594}, 2022.

\bibitem[She and Owen(2011)]{SheOwe2011Outlier}
Y.~She and A.~B. Owen.
\newblock Outlier detection using nonconvex penalized regression.
\newblock \emph{Journal of the American Statistical Association}, 106\penalty0
  (494):\penalty0 626--639, 2011.

\bibitem[Sivakumar et~al.(2015)Sivakumar, Banerjee, and
  Ravikumar]{SivBenRav2015Beyond}
V.~Sivakumar, A.~Banerjee, and P.~K. Ravikumar.
\newblock Beyond sub-gaussian measurements: High-dimensional structured
  estimation with sub-exponential designs.
\newblock \emph{Advances in neural information processing systems}, 28, 2015.

\bibitem[Su and Candes(2016)]{SuCan2016Slope}
W.~Su and E.~Candes.
\newblock Slope is adaptive to unknown sparsity and asymptotically minimax.
\newblock \emph{The Annals of Statistics}, 44\penalty0 (3):\penalty0
  1038--1068, 2016.

\bibitem[Sun et~al.(2020)Sun, Zhou, and Fan]{SunZhoFan2020Adaptive}
Q.~Sun, W.-X. Zhou, and J.~Fan.
\newblock Adaptive {H}uber regression.
\newblock \emph{Journal of the American Statistical Association}, 115\penalty0
  (529):\penalty0 254--265, 2020.

\bibitem[Talagrand(2014)]{Tal2014Upper}
M.~Talagrand.
\newblock \emph{Upper and lower bounds for stochastic processes}, volume~60.
\newblock Springer, 2014.

\bibitem[Thompson(2020)]{Tho2020Outlier}
P.~Thompson.
\newblock Outlier-robust sparse/low-rank least-squares regression and robust
  matrix completion.
\newblock \emph{arXiv preprint arXiv:2012.06750}, 2020.

\bibitem[Tibshirani(1996)]{Tib1996Regression}
R.~Tibshirani.
\newblock Regression shrinkage and selection via the lasso.
\newblock \emph{Journal of the Royal Statistical Society: Series B},
  58\penalty0 (1):\penalty0 267--288, 1996.

\bibitem[Vershynin(2018)]{Ver2018High}
R.~Vershynin.
\newblock \emph{High-dimensional probability: An introduction with applications
  in data science}, volume~47.
\newblock Cambridge university press, 2018.

\bibitem[Wang et~al.(2016)Wang, Berthet, and
  Samworth]{WanBerSam2016Statistical}
T.~Wang, Q.~Berthet, and R.~J. Samworth.
\newblock Statistical and computational trade-offs in estimation of sparse
  principal components.
\newblock \emph{The Annals of Statistics}, 44\penalty0 (5):\penalty0
  1896--1930, 2016.

\bibitem[Wright(1973)]{Wri1973Bound}
F.~T. Wright.
\newblock A bound on tail probabilities for quadratic forms in independent
  random variables whose distributions are not necessarily symmetric.
\newblock \emph{The Annals of Probability}, 1\penalty0 (6):\penalty0
  1068--1070, 1973.

\bibitem[Yuan and Lin(2006)]{YuaLin2006Model}
M.~Yuan and Y.~Lin.
\newblock Model selection and estimation in regression with grouped variables.
\newblock \emph{Journal of the Royal Statistical Society: Series B},
  68\penalty0 (1):\penalty0 49--67, 2006.

\bibitem[Zajkowski(2020)]{Zaj2020Bounds}
K.~Zajkowski.
\newblock Bounds on tail probabilities for quadratic forms in dependent
  sub-gaussian random variables.
\newblock \emph{Statistics \& Probability Letters}, 167:\penalty0 108898, 2020.

\bibitem[Zhang(2010)]{Zha2010Nearly}
C.-H. Zhang.
\newblock Nearly unbiased variable selection under minimax concave penalty.
\newblock \emph{The Annals of Statistics}, 38\penalty0 (2):\penalty0 894--942,
  2010.

\bibitem[Zou and Hastie(2005)]{ZouHas2005Regularization}
H.~Zou and T.~Hastie.
\newblock Regularization and variable selection via the elastic net.
\newblock \emph{Journal of the Royal Statistical Society: Series B},
  67\penalty0 (2):\penalty0 301--320, 2005.

\end{thebibliography}
\newpage
\appendix

\begin{center} \textbf{Appendix} \end{center}
In the appendices, 
the value of the numerical constant $C$  be allowed to change from line to line.

\section{Proof of Theorem \ref{t:main}}
\label{sec:pmt}
Suppose that the assumptions in Theorem \ref{t:main} hold.
To prove Theorem \ref{t:main}, it is sufficient that we confirm  \eqref{ine:det:main:0-1} - \eqref{ine:det:main:0-4}  in Proposition \ref{p:main} hold with high probability, as described already, and  then we obtain the result \eqref{ine:result1} with concrete values of $r_{a,1}, r_{a,2}, r_{a,\Sigma}, r_{b,1}, r_{b,2}, r_{b,\Sigma}, b$.  Obviously, we have $r_1=c_{r_1}\sqrt{s} r_\Sigma$ and $r_2 = c_{r_2}r_\Sigma$ in \eqref{ine:det:main:0-4} from \eqref{ine:r121}. In the following, we show the remaining.
We see that, under the assumptions in Theorem \ref{t:main}, Propositions \ref{p:cwpre}, \ref{p:main1} and \ref{p:main:sc} hold immediately with probability at least $1-4\delta$, and then, Propositions \ref{p:main:out} and \ref{p:main:out2} also hold because \eqref{ine:cwpre'} and \eqref{ine:cwpre2} in  Proposition \ref{p:cwpre}  hold and Algorithm \ref{alg:cw0} returns $\hat{w}$ from the definition of $\tau_{\mr {cut}}$.

	\subsection{Confirmation of \eqref{ine:det:main:0-1}}
	From the following  lemma, we can confirm \eqref{ine:det:main:0-1}.
	The proofs are given in the  Appendix \ref{sec:calc}. 
	\begin{lemma}
	\label{1calcknown}
	Assume that Propositions \ref{p:cwpre}, \ref{p:main1}-\ref{p:main:sc} hold.
		For any  $ \vectheta_\eta \in r_1 \mbb{B}^d_1 \cap r_2 \mbb{B}^d_2 \cap r_\Sigma \mbb{B}^d_\Sigma $, we have
		\begin{align}
		\label{ine:calcknown1}
			\left|\sum_{i=1}^n \hat{w}_i'h(r_{\vecbeta^*,i}) \vecX_i^\top \vectheta_\eta \right| \leq  3c_{\max}^2 L\left(\rho r_{d,s}r_1+r_\delta r_\Sigma+\kappa_{\mr{u}}\sqrt{\frac{o}{n}(sr_{d,s}+r_\delta)}r_2+\kappa_{\mr{u}}r_or_2\right).
		\end{align}
	\end{lemma}
	From Lemma \ref{1calcknown}, we see that \eqref{ine:det:main:0-1}
	holds with
	\begin{align}
	\label{confirmknown}
		r_{a,1} &= 3c_{\max}^2L \lambda_o\sqrt{n}\rho r_{d,s},\quad r_{a,2} =3c_{\max}^2L \lambda_o\sqrt{n}\left(\kappa_{\mr{u}}\sqrt{\frac{o}{n}(sr_{d,s}+r_\delta)}+\kappa_{\mr{u}}r_o \right),\nonumber\\
		r_{a,\Sigma} &=3c_{\max}^2L \lambda_o\sqrt{n}r_\delta.
	\end{align}

	\subsection{Confirmation of  \eqref{ine:det:main:0-3}}
	From \eqref{confirmknown},
	\begin{align}
		(C_s:=)r_{a,1}+ \frac{c_{r_2}r_{a,2}+ r_{a,\Sigma}}{c_{r_1}\sqrt{s}} = 3c_{\max}^2L\lambda_o\sqrt{n}\times\frac{1}{c_{r_1}\sqrt{s}}R_{d,n,o}.
	\end{align}
	From the definition of $\lambda_s$,
	\begin{align}
		\frac{\lambda_s}{C_s } \geq \frac{c_s c_{\max}^2L\lambda_o\sqrt{n}\frac{1}{c_{r_1}\sqrt{s}}R_{d,n,o}}{3c_{\max}^2L\lambda_o\sqrt{n}\frac{1}{c_{r_1}\sqrt{s}}R_{d,n,o}} = \frac{c_s}{3}\ge \frac{c_{\mr{RE}}+1}{c_{\mr{RE}}-1}>0.
	\end{align}
	Hence, we have $\lambda_s-C_s>0$ and 
	\begin{align}
		\frac{\lambda_s+C_s}{\lambda_s-C_s}=\frac{1+\frac{C_s}{\lambda_s}}{1-\frac{C_s}{\lambda_s}} \leq \frac{1+\frac{c_{\mr{RE}}-1}{c_{\mr{RE}}+1}}{1-\frac{c_{\mr{RE}}-1}{c_{\mr{RE}}+1}}= c_{\mr{RE}}.
	\end{align} 
	Therefore, we see that  \eqref{ine:det:main:0-3} holds.

	\subsection{Confirmation of  \eqref{ine:det:main:0-2}}
	From the following  lemma, we can confirm \eqref{ine:det:main:0-2}.
	The proofs are given in the  Appendix \ref{sec:calc}. 
	\begin{lemma}
	\label{3calcknown}
	Assume that Propositions \ref{p:cwpre}, \ref{p:main1}-\ref{p:main:sc} hold.
	For any  $ \vectheta_\eta \in r_1 \mbb{B}^d_1 \cap r_2 \mbb{B}^d_2 \cap r_\Sigma \mbb{B}^d_\Sigma$, we have
	\begin{align}
		\label{ine:calcknown3}
			&\sum_{i=1}^n \lambda_o\sqrt{n} \hat{w}_i'\left(-h(r_{\vecbeta^*+\vectheta_\eta,i}) +h(r_{\vecbeta^*,i})\right) \vecX_i^\top \vectheta_\eta \nonumber\\
			&\geq \frac{\|\Sigma^\frac{1}{2}\vecv\|_2^2}{3}-c_{\max}^2L\lambda_o\sqrt{n}\left(\rho r_{d,s} r_1+r_\delta r_\Sigma+\kappa_{\mr{u}}\sqrt{\frac{o}{n}(sr_{d,s}+r_\delta)}r_2+\kappa_{\mr{u}}r_or_2\right).
	\end{align}
	\end{lemma}

	From Lemma \ref{3calcknown}, we see that \eqref{ine:det:main:0-2} holds with 
	\begin{align}
		b &= \frac{1}{3},\quad r_{b,1} = \frac{r_{a,1}}{3},\quad r_{b,2} = \frac{r_{a,2}}{3}, \quad r_{b,\Sigma} = \frac{r_{a,\Sigma}}{3}.
	\end{align}

	\subsection{Confirmation of  \eqref{ine:det:main:0-4}}
	As we mentioned at the beginning of Section \ref{sec:pmt}, $r_1 = c_{r_1}\sqrt{s}r_\Sigma$ and $r_2 = c_{r_2}r_{\Sigma}$ is clear from their definitions.
	We confirm $r_\Sigma$. We see that 
	\begin{align}
		&\frac{2}{b} \left(c_{r_1}\sqrt{s}(r_{a,1}+r_{b,1})+
		c_{r_2}(r_{a,2}+r_{b,2}) +r_{a,\Sigma}+r_{b,\Sigma} + c_{r_1}\sqrt{s}\lambda_s\right)\nonumber \\
		&\stackrel{(a)}\leq 6c_{\max}^2 L\lambda_o\sqrt{n}\left(4 + c_s  \right) R_{d,n,o}\nonumber\\
		&\leq r_\Sigma,
	\end{align}
 and the proof is complete.

\section{Proof of Theorem \ref{t:main2}}
\label{asec:pmt}
The strategy of this proof is the same as the one of Theorem \ref{t:main}.
Suppose that the assumptions in Theorem \ref{t:main2} hold.
To prove Theorem \ref{t:main2},  it is sufficient that we confirm  \eqref{ine:det:main:0-1} - \eqref{ine:det:main:0-4}  in Proposition \ref{p:main} hold with high probability, as described already, and  then we obtain the result \eqref{ine:result:un} with concrete values of $r_{a,1}, r_{a,2}, r_{a,\Sigma}, r_{b,1}, r_{b,2}, r_{b,\Sigma}, b$.  Obviously, we have $r_1=c_{r_1}\sqrt{s} r_\Sigma$ and $r_2 = c_{r_2}r_\Sigma$ in \eqref{ine:det:main:0-4} from \eqref{ine:r121}. In the following, we show the remaining.
We see that, under the assumptions in Theorem \ref{t:main2}, Corollary \ref{c:cwpre},  Propositions \ref{p:main1} and \ref{p:main:sc} hold immediately with probability at least $1-3\delta$, and then, Propositions \ref{p:main:out-un} and \ref{p:main:out2-un} also hold because \eqref{ine:cwpre2-1} and \eqref{ine:cwpre2-2} in  Corollary \ref{c:cwpre}  hold, and Algorithm \ref{alg:cw0-un} returns $\hat{w}$ from the definition of $\tau_{\mr {cut}}$.

\subsection{Confirmation of  \eqref{ine:det:main:0-1}}
From the following  lemma, we can confirm \eqref{ine:det:main:0-1}.
The proofs are given by Appendix \ref{sec:calc}. 
\begin{lemma}
\label{1calcunknown}
	Assume that Corollary \ref{c:cwpre}, Propositions \ref{p:main1}, \ref{p:main:sc}, \ref{p:main:out-un} and \ref{p:main:out2-un} hold. For any  $ \vectheta_\eta \in r_1 \mbb{B}^d_1 \cap r_2 \mbb{B}^d_2 \cap r_\Sigma \mbb{B}^d_\Sigma$, we have
	\begin{align}
	\label{ine:calckunnown1}
		\left|\sum_{i=1}^n \hat{w}_i'h(r_{\vecbeta^*,i}) \vecX_i^\top \vectheta_\eta \right| \leq  3c'^2_{\max} L\left(\rho r_{d,s}r_1+r_\delta r_\Sigma+\sqrt{\frac{o}{n}}\sqrt{\kappa_{\mr{u}}^2\left(sr_{d,s}+r_\delta \right)+\Sigma_{\max}^2}r_2\right).
	\end{align}
\end{lemma}
From Lemma \ref{1calcknown}, we see that \eqref{ine:det:main:0-1}
holds with
\begin{align}
\label{confirmunknown}
	r_{a,1} &= 3c'^2_{\max} L \lambda_o\sqrt{n}\rho r_{d,s},\quad r_{a,2} =3c'^2_{\max} L \lambda_o\sqrt{n}\sqrt{\frac{o}{n}}\sqrt{\kappa_{\mr{u}}^2\left(sr_{d,s}+r_\delta \right)+\Sigma_{\max}^2},\nonumber\\
	r_{a,\Sigma} &=3c'^2_{\max} L \lambda_o\sqrt{n}r_\delta.
\end{align}

\subsection{Confirmation of  \eqref{ine:det:main:0-3}}
From \eqref{confirmunknown},
\begin{align}
	(C_s:=)r_{a,1}+ \frac{c_{r_2}r_{a,2}+ r_{a,\Sigma}}{c_{r_1}\sqrt{s}} = 3c'^2_{\max} L\lambda_o\sqrt{n}\times \frac{1}{c_{r_1}\sqrt{s}}R_{d,n,o}'.
\end{align}
From the definition of $\lambda_s$,
\begin{align}
	\frac{\lambda_s}{C_s} \geq \frac{c_s c'^2_{\max} L\lambda_o\sqrt{n}\frac{1}{c_{r_1}\sqrt{s}}R_{d,n,o}'}{3c'^2_{\max} L\lambda_o\sqrt{n}\frac{1}{c_{r_1}\sqrt{s}}R_{d,n,o}'} = \frac{c_s}{3}\ge \frac{c_{\mr{RE}}+1}{c_{\mr{RE}}-1}>0.
\end{align}
Hence, we have $\lambda_s-C_s>0$ and 
\begin{align}
	\frac{\lambda_s+C_s}{\lambda_s-C_s}=\frac{1+\frac{C_s}{\lambda_s}}{1-\frac{C_s}{\lambda_s}} \leq \frac{1+\frac{c_{\mr{RE}}-1}{c_{\mr{RE}}+1}}{1-\frac{c_{\mr{RE}}-1}{c_{\mr{RE}}+1}}= c_{\mr{RE}}.
\end{align} 
Therefore, we see that  \eqref{ine:det:main:0-3} holds.

\subsection{Confirmation of  \eqref{ine:det:main:0-2}}
From the following  lemma, we can confirm \eqref{ine:det:main:0-2}.
The proofs are given by Appendix \ref{sec:calc}. 
\begin{lemma}
\label{3calcunknown}
Assume that Corollary \ref{c:cwpre}, Propositions \ref{p:main1}, \ref{p:main:sc}, \ref{p:main:out-un} and \ref{p:main:out2-un} hold. 
For any  $ \vectheta_\eta \in r_1 \mbb{B}^d_1 \cap r_2 \mbb{B}^d_2 \cap r_\Sigma \mbb{B}^d_\Sigma$, we have
\begin{align}
	\label{ine:calcunknown3}
		&\sum_{i=1}^n \lambda_o\sqrt{n} \hat{w}_i'\left(-h(r_{\vecbeta^*+\vectheta_\eta,i}) +h(r_{\vecbeta^*,i})\right) \vecX_i^\top \vectheta_\eta \nonumber\\
		& \hspace*{10mm} \geq \frac{\|\Sigma^\frac{1}{2}\vecv\|_2^2}{3}-c'^2_{\max} L\lambda_o\sqrt{n}\left(\rho r_{d,s} r_1+r_\delta r_\Sigma+\sqrt{\frac{o}{n}}\sqrt{\kappa_{\mr{u}}^2\left(sr_{d,s}+r_\delta \right)+\Sigma_{\max}^2}r_2\right).
\end{align}
\end{lemma}

From Lemma \ref{3calcunknown}, we see that \eqref{ine:det:main:0-2} holds with 
\begin{align}
	b &= \frac{1}{3},\quad r_{b,1} = \frac{r_{a,1}}{3},\quad r_{b,2} = \frac{r_{a,2}}{3}, \quad r_{b,\Sigma} = \frac{r_{a,\Sigma}}{3}.
\end{align}

\subsection{Confirmation of  \eqref{ine:det:main:0-4}}
As we mentioned at the beginning of Appendix \ref{asec:pmt}, $r_1 = c_{r_1}\sqrt{s}r_\Sigma$ and $r_2 = c_{r_2}r_{\Sigma}$ is clear from their definitions.
We confirm $r_\Sigma$. We see that 
\begin{align}
	&\frac{2}{b} \left(c_{r_1}\sqrt{s}(r_{a,1}+r_{b,1})+
	c_{r_2}(r_{a,2}+r_{b,2}) +r_{a,\Sigma}+r_{b,\Sigma} + c_{r_1}\sqrt{s}\lambda_s\right)\nonumber \\
	& \hspace*{10mm}  \leq 6c'^2_{\max} L\lambda_o\sqrt{n}\left(4 + c_s  \right)\left(\rho c_{r_1}\sqrt{s} r_{d,s} +r_\delta +c_{r_2}\sqrt{\frac{o}{n}}\sqrt{\kappa_{\mr{u}}^2\left(sr_{d,s}+r_\delta \right)+\Sigma_{\max}^2}\right)\nonumber\\
	& \hspace*{10mm}  \leq r_\Sigma,
\end{align}
and the proof is complete. 

\section{Preparation of proof of Proposition \ref{p:main}}
\label{sec:mainproposition-pre}
We introduce the following four lemmas, that are used in the proof of Proposition \ref{p:main}.
\begin{lemma}
\label{l:mainpre1-1}
 Suppose that \eqref{ine:det:main:0-1}, \eqref{ine:det:main:0-3}, $\|\vectheta_\eta\|_1\leq  r_1$,  $\|\vectheta_\eta\|_2= r_2$ and  $\|\Sigma^\frac{1}{2}\vectheta_\eta\|_2\leq r_\Sigma$, where $r_1 = c_{r_1}\sqrt{s}r_\Sigma $ and  $r_2 =c_{r_2} r_\Sigma $  hold.
Then, for any fixed $\eta\in(0,1)$, we have
\begin{align}
 \|\vectheta_\eta\|_2  &\leq \frac{3+c_{\mr{RE}}}{\kappa_{\mr{l}}}\|\Sigma^\frac{1}{2}\vectheta_\eta\|_2.
\end{align}
\end{lemma}
\begin{lemma}
	\label{l:mainpre1-2}
	 Suppose that \eqref{ine:det:main:0-1}, \eqref{ine:det:main:0-3}, $\|\vectheta_\eta\|_1 = r_1$, $\|\vectheta_\eta\|_2\leq  r_2$ and $\|\Sigma^\frac{1}{2}\vectheta_\eta\|_2\leq  r_\Sigma$, where $r_1 = c_{r_1}\sqrt{s}r_\Sigma $ and  $r_2 =c_{r_2} r_\Sigma $ hold.
	Then, for any fixed $\eta\in(0,1)$, we have
	\begin{align}
\|\vectheta_\eta\|_1 &\leq \frac{1+c_{\mr{RE}}}{	\mathfrak{r} } \sqrt{s}\|\Sigma^\frac{1}{2}\vectheta_\eta\|_2.
	\end{align}
	\end{lemma}

	\begin{lemma}
		\label{l:mainpre2-1}
		Suppose that \eqref{ine:det:main:0-1} - \eqref{ine:det:main:0-2},  $\|\vectheta_\eta\|_1\leq  r_1$,  $\|\vectheta_\eta\|_2\leq r_2$ and  $\|\Sigma^\frac{1}{2}\vectheta_\eta\|_2= r_2$, where  $r_1 = c_{r_1}\sqrt{s}r_\Sigma $ and  $r_2 =c_{r_2} r_\Sigma $ hold.
		Then, for any  $\eta\in(0,1)$, we have
		\begin{align}
			\|\Sigma^\frac{1}{2}\vectheta_\eta\|_2 &\leq \frac{1}{b} \left(c_{r_1}\sqrt{s}(r_{a,1}+r_{b,1})+
			c_{r_2}(r_{a,2}+r_{b,2}) +r_{a,\Sigma}+r_{b,\Sigma} + c_{r_1}\sqrt{s}\lambda_s\right).
		\end{align}
	\end{lemma}

\subsection{Proof of Lemma \ref{l:mainpre1-1}}
For a vector $\vecv=(v_1,\cdots,v_d) $, define $\{v^{\sharp}_1,\cdots,v^{\sharp}_d\}$ as a non-increasing rearrangement of $\{|v_1|,\cdots,|v_d|\}$, and $\vecv^{\sharp}\in \mbb{R}^d$ as a vector such that $\vecv^{\sharp}|_i = v^{\sharp}_i$. 
For the sets $S_1=\{1,...,s\}$ and $S_2=\{s+1,...,d\}$, let $v^{\sharp1}=v^\sharp_{S_1}$ and $v^{\sharp2}=v^\sharp_{S_2}$.

In Section \ref{sec:case1:re}, we have
\begin{align}
	\|\vectheta_\eta\|_2 \leq \frac{\sqrt{1+c_{\mr{RE}}}}{\kappa_{\mr{l}}} \|\Sigma^\frac{1}{2}\vectheta_\eta\|_2
\end{align}
assuming $\|\vectheta_\eta \|_2 \leq \|\vectheta_\eta\|_1/\sqrt{s}$, and in  Section \ref{sec:case2:re}, we have
\begin{align}
	\|\vectheta_\eta\|_2 \leq  2\|\vectheta_\eta^{\sharp1}\|_2\leq \frac{2}{\kappa_{\mr{l}}}\|\Sigma^\frac{1}{2}\vectheta_\eta\|_2
\end{align}
assuming $\|\vectheta_\eta \|_2 \geq \|\vectheta_\eta\|_1/\sqrt{s}$. From the above two inequalities, we have 
\begin{align}
	\|\vectheta_\eta\|_2  &\leq \frac{3+c_{\mr{RE}}}{\kappa_{\mr{l}}}\|\Sigma^\frac{1}{2}\vectheta_\eta\|_2.
 \end{align}

	\subsubsection{Case I}
	\label{sec:case1:re}
	In Section \ref{sec:case1:re}, suppose that $\|\vectheta_\eta \|_2 \leq \|\vectheta_\eta\|_1/\sqrt{s}$.
	Let 
	\begin{align}
	 Q'(\eta) = \lambda_o\sqrt{n} \hat{w}_i'\sum_{i=1}^n \left(-h (r_{\vecbeta^*+\vectheta_\eta,i})+h(r_{\vecbeta^*,i}) \right)\vecX_i^\top \vectheta.
	 \end{align}
	From the proof of Lemma F.2. of \cite{FanLiuSunZha2018Lamm}, we have $\eta Q'(\eta) \leq \eta Q'(1)$ and this means
	\begin{align}
	\label{ine:det:lemma:1}
	\sum_{i=1}^n \lambda_o\sqrt{n} \hat{w}_i'\left(-h (r_{\vecbeta^*+\vectheta_\eta,i})+h(r_{\vecbeta^*,i})\right) \vecX_i^\top \vectheta_\eta &\leq \sum_{i=1}^n \lambda_o\sqrt{n} \hat{w}_i'\eta \left(-h (r_{\hat{\vecbeta},i}) +h(r_{\vecbeta^*,i})\right)\vecX_i^\top\vectheta.
	\end{align}
	Let $\partial \vecv$ be the sub-differential of $\|\vecv\|_1$.
	Adding $\eta \lambda_s(\|\hat{\vecbeta}\|_1-\|\vecbeta^*\|_1) $ to both sides of \eqref{ine:det:lemma:1}, we have
	
	\begin{align}
	\label{ine:det:lemma:2}
	&\sum_{i=1}^n \lambda_o\sqrt{n} \hat{w}_i'\left(-h (r_{\vecbeta^*+\vectheta_\eta,i}) +h(r_{\vecbeta^*,i})\right)\vecX_i^\top\vectheta_\eta +\eta \lambda_*(\|\hat{\vecbeta}\|_1-\|\vecbeta^*\|_1)\nonumber \\
	 & \stackrel{(a)}{\leq} \sum_{i=1}^n \lambda_o\sqrt{n} \hat{w}_i'\eta \left(-h (r_{\hat{\vecbeta},i}) +h(r_{\vecbeta^*,i})\right)\vecX_i^\top\hat{\vectheta}+\eta \lambda_s\langle \partial \hat{\vecbeta},\vectheta\rangle\nonumber\\
	 &\stackrel{(b)}{=} \sum_{i=1}^n \lambda_o\sqrt{n} \hat{w}_i' h(r_{\vecbeta^*,i})\vecX_i^\top\vectheta_\eta,
	\end{align}
	where (a) follows from $\|\hat{\vecbeta}\|_1-\|\vecbeta^*\|_1 \leq \langle \partial \hat{\vecbeta},\vectheta\rangle$, which is the definition of the sub-differential, and (b) follows from the optimality of $\hat{\vecbeta}$.

	From the convexity of Huber loss, the left-hand side  of \eqref{ine:det:lemma:2} is positive and we have
	\begin{align}
		\label{ine:det:lemma:3}
	0\leq \sum_{i=1}^n \lambda_o\sqrt{n} \hat{w}_i' h(r_{\vecbeta^*,i})\vecX_i^\top\vectheta_\eta+	\eta \lambda_s(\|\vecbeta^*\|_1-\|\hat{\vecbeta}\|_1).
	\end{align}
From \eqref{ine:det:main:0-1}, the first term of the right-hand side of \eqref{ine:det:lemma:3} is evaluated as
	\begin{align}
		\label{ine:det:lemma:4}
		\sum_{i=1}^n \lambda_o\sqrt{n} \hat{w}_i' h(r_{\vecbeta^*,i})\vecX_i^\top\vectheta_\eta&\leq r_{a,1}r_1+r_{a,2} r_2 + r_{a,\Sigma}r_\Sigma\nonumber \\
		&\stackrel{(a)}{\leq} \left(\frac{c_{r_1}}{c_{r_2}}\sqrt{s}r_{a,1}+r_{a,2} +\frac{1}{c_{r_2}}  r_{a,\Sigma} \right)\|\vectheta_\eta\|_2\nonumber \\
		&\stackrel{(b)}\leq \frac{1}{\sqrt{s}}\left(\frac{c_{r_1}}{c_{r_2}}\sqrt{s}r_{a,1}+r_{a,2} + \frac{1}{c_{r_2}} r_{a,\Sigma}\right)\|\vectheta_\eta\|_1,
	\end{align}
	where (a) follows from  $r_1 = c_{r_1}\sqrt{s}r_\Sigma $ and  $r_2 =c_{r_2} r_\Sigma $  and (b) follows from the assumption of this section, $\|\vectheta_\eta\|_2 \leq \|\vectheta_\eta\|_1/\sqrt{s}$.
	From~\eqref{ine:det:lemma:3},~\eqref{ine:det:lemma:4} and the assumption $\|\vectheta_\eta \|_2 \leq \|\vectheta_\eta\|_1/\sqrt{s}$, we have
	\begin{align}
	0 \leq \left(\frac{r_{a,2}}{\sqrt{s}} + \frac{c_{r_1}\sqrt{s} r_{a,1}+r_{a,\Sigma}}{c_{r_2}\sqrt{s}}  \right)\|\vectheta_\eta\|_1+\eta \lambda_s (\|\vecbeta^*\|_1-\|\hat{\vecbeta}\|_1).
	\end{align}
	Define $\mc{J}_{\veca}$ as the index set of  non-zero entries of $\veca$, and $\vectheta_{\eta, \mc{J}_{\veca}}$ as a vector such that $\vectheta_{\eta, \mc{J}_{\veca}}|_i=\vectheta_\eta|_i$ for $i \in \mc{J}_{\veca}$ and $\vectheta_{\eta, \mc{J}_{\veca}}|_i = 0$ for $i \notin \mc{J}_{\veca}$.
	Furthermore, we see
	\begin{align}
	0
	&\leq \left(\frac{r_{a,2}}{\sqrt{s}} + \frac{c_{r_1}\sqrt{s} r_{a,1}+r_{a,\Sigma}}{c_{r_2}\sqrt{s}}  \right)\|\vectheta_\eta\|_1+\eta \lambda_s(\|\vecbeta^*\|_1-\|\hat{\vecbeta}\|_1) \nonumber \\
	& \leq \left(\frac{r_{a,2}}{\sqrt{s}} + \frac{c_{r_1}\sqrt{s} r_{a,1}+r_{a,\Sigma}}{c_{r_2}\sqrt{s}}  \right) (\|\vectheta_{\eta,\mc{J}_{\vecbeta^*}}\|_1+\|\vectheta_{\eta,\mc{J}^c_{\vecbeta^*}}\|_1) + \eta\lambda_s(\|\vecbeta^*_{\mc{J}_{\vecbeta^*}}- \hat{\vecbeta}_{\mc{J}_{\vecbeta^*}}\|_1-\|\hat{\vecbeta}_{\mc{J}^c_{\vecbeta^*}}\|_1)\nonumber\\
	& =\left(\lambda_s + \left(\frac{r_{a,2}}{\sqrt{s}} + \frac{c_{r_1}\sqrt{s} r_{a,1}+r_{a,\Sigma}}{c_{r_2}\sqrt{s}}  \right)\right)\|\vectheta_{\eta,\mc{J}_{\vecbeta^*}}\|_1
	+\left(-\lambda_s+ \left(\frac{r_{a,2}}{\sqrt{s}} + \frac{c_{r_1}\sqrt{s} r_{a,1}+r_{a,\Sigma}}{c_{r_2}\sqrt{s}}  \right)\right)\|\vectheta_{\eta,\mc{J}^c_{\vecbeta^*}}\|_1.
	\end{align}
	Then, we have
		\begin{align}
			\| \vectheta_{\eta,\mc{J}_{\vecbeta^*}^c}\|_1 \leq \frac{\lambda_s + \left(\frac{r_{a,2}}{\sqrt{s}} + \frac{c_{r_1}\sqrt{s} r_{a,1}+r_{a,\Sigma}}{c_{r_2}\sqrt{s}}  \right)}{\lambda_s - \left(\frac{r_{a,2}}{\sqrt{s}} + \frac{c_{r_1}\sqrt{s} r_{a,1}+r_{a,\Sigma}}{c_{r_2}\sqrt{s}}  \right)}\| \vectheta_{\eta,\mc{J}_{\vecbeta^*}}\|_1\stackrel{(a)}{\leq}\frac{\lambda_s +  \left(r_{a,1}+ \frac{c_{r_2}r_{a,2}+ r_{a,\Sigma}}{c_{r_1}\sqrt{s}}\right)}{\lambda_s -  \left(r_{a,1}+ \frac{c_{r_2}r_{a,2}+ r_{a,\Sigma}}{c_{r_1}\sqrt{s}}\right) }\| \vectheta_{\eta,\mc{J}_{\vecbeta^*}}\|_1 \stackrel{(b)}{\leq} c_{\mr{RE}} \| \vectheta_{\eta,\mc{J}_{\vecbeta^*}}\|_1,
		\end{align}
		where (a) follows from the fact that $c_{r_2}\geq c_{r_1}$ and (b) follows from \eqref{ine:det:main:0-3}, and
		from the definition of $\| \vectheta_\eta^{\sharp2}\|_1 $ and $\| \vectheta_\eta^{\sharp1}\|_1$, we have
		\begin{align}
			\label{ine:det:lemma:5}
			\| \vectheta_\eta^{\sharp2}\|_1 \leq c_{\mr{RE}} \| \vectheta_\eta^{\sharp1}\|_1.
		\end{align}
	Then, from the standard shelling argument, we have
	\begin{align}
		\| \vectheta_\eta^{\sharp2}\|_2^2=\sum_{i=s+1}^d(\vectheta_{\eta}^{\sharp}|_i)^2 \leq \sum_{i=s+1}^d\left| \vectheta_{\eta}^{\sharp}|_i\right|\left(\frac{1}{s}\sum_{j=1}^s\left|\vectheta_{\eta}^{\sharp}|_j\right|\right) \leq \frac{1}{s}\| \vectheta_\eta^{\sharp1}\|_1\| \vectheta_\eta^{\sharp2}\|_1\leq \frac{c_{\mr{RE}}\| \vectheta_\eta^{\sharp1}\|_1^2}{s}\leq c_{\mr{RE}}\| \vectheta_\eta^{\sharp1}\|_2^2.
	\end{align}
	and from the definition of $\kappa_{\mr{l}}$, we have 
	\begin{align}
		\kappa_{\mr{l}}^2 \|\vectheta_\eta\|_2^2 \leq \kappa_{\mr{l}}^2 \left(\|\vectheta_\eta^{\sharp1}\|_2^2 +\|\vectheta_\eta^{\sharp2}\|_2^2\right)\leq \kappa_{\mr{l}}^2 (1+c_{\mr{RE}})\|\vectheta_\eta^{\sharp1}\|_2^2 \leq(1+c_{\mr{RE}}) \|\Sigma^\frac{1}{2}\vectheta_\eta\|_2^2
	\end{align}
		\subsubsection{Case II}
		\label{sec:case2:re}
		In Section \ref{sec:case2:re}, suppose that $\|\vectheta_\eta \|_2 \geq \|\vectheta_\eta\|_1/\sqrt{s}$.
		\begin{align}
			\| \vectheta_\eta^{\sharp2}\|_2^2=\sum_{i=s+1}^d(\vectheta_{\eta}^{\sharp}|_i)^2 \leq \sum_{i=s+1}^d\left|\vectheta_{\eta}^{\sharp}|_i\right|\left(\frac{1}{s}\sum_{j=1}^s\left|\vectheta_{\eta}^{\sharp}|_j\right|\right) \leq \frac{1}{s}\| \vectheta_\eta^{\sharp1}\|_1\| \vectheta_\eta^{\sharp2}\|_1\leq \| \vectheta_\eta^{\sharp1}\|_2\| \vectheta_\eta\|_2.
		\end{align}
		Then, we have 
		\begin{align}
			\|\vectheta_\eta\|_2^2 \leq \|\vectheta_\eta^{\sharp1}\|_2^2 +\| \vectheta_\eta^{\sharp2}\|_2^2\leq  \|\vectheta_\eta^{\sharp1}\|_2\|\vectheta_\eta\|_2 +\| \vectheta_\eta^{\sharp1}\|_2\| \vectheta_\eta\|_2\Rightarrow 			\|\vectheta_\eta\|_2 \leq  2\|\vectheta_\eta^{\sharp1}\|_2,
		\end{align}
		and we have
		\begin{align}
				\|\vectheta_\eta\|_2 \leq  2\|\vectheta_\eta^{\sharp1}\|_2\leq \frac{2}{\kappa_{\mr{l}}}\|\Sigma^\frac{1}{2}\vectheta_\eta^{\sharp1}\|_2\leq\frac{2}{\kappa_{\mr{l}}}\|\Sigma^\frac{1}{2}\vectheta\|_2.
		\end{align}

\subsection{Proof of Lemma \ref{l:mainpre1-2}}
From the same argument of the proof of Lemma \ref{l:mainpre1-1}, we have \eqref{ine:det:lemma:3}.
From \eqref{ine:det:main:0-1}, the first term of the right-hand side of \eqref{ine:det:lemma:3} is evaluated as
\begin{align}
	\label{ine:det:lemma2:1}
	\sum_{i=1}^n \lambda_o\sqrt{n} \hat{w}_i' h(r_{\vecbeta^*,i})\vecX_i^\top\vectheta_\eta&\leq r_{a,1}r_1+r_{a,2} r_2 + r_{a,\Sigma} r_\Sigma\nonumber \\
	& \stackrel{(a)}{\leq} \left(r_{a,1}+\frac{c_{r_2}}{c_{r_1}\sqrt{s}} r_{a,2} + \frac{1}{c_{r_1}\sqrt{s}}r_{a,\Sigma}\right)\|\vectheta_\eta\|_1.
\end{align}
where (a) follows from $r_1 = c_{r_1}\sqrt{s}r_\Sigma $ and  $r_2 =c_{r_2} r_\Sigma $.
From~\eqref{ine:det:lemma:3} and \eqref{ine:det:lemma2:1}, we have
\begin{align}
0 \leq \left(r_{a,1}+ \frac{c_{r_2}r_{a,2}+ r_{a,\Sigma}}{c_{r_1}\sqrt{s}}\right)\|\vectheta_\eta\|_1+\eta \lambda_s (\|\vecbeta^*\|_1-\|\hat{\vecbeta}\|_1).
\end{align}
Furthermore, we see
\begin{align}
0
&\leq  \left(r_{a,1}+ \frac{c_{r_2}r_{a,2}+ r_{a,\Sigma}}{c_{r_1}\sqrt{s}}\right)\|\vectheta_\eta\|_1+\eta \lambda_s(\|\vecbeta^*\|_1-\|\hat{\vecbeta}\|_1) \nonumber \\
& \leq  \left(r_{a,1}+ \frac{c_{r_2}r_{a,2}+ r_{a,\Sigma}}{c_{r_1}\sqrt{s}}\right) (\|\vectheta_{\eta,\mc{J}_{\vecbeta^*}}\|_1+\|\vectheta_{\eta,\mc{J}^c_{\vecbeta^*}}\|_1) + \eta\lambda_s(\|\vecbeta^*_{\mc{J}_{\vecbeta^*}}- \hat{\vecbeta}_{\mc{J}_{\vecbeta^*}}\|_1-\|\hat{\vecbeta}_{\mc{J}^c_{\vecbeta^*}}\|_1)\nonumber\\
& =\left(\lambda_s +  \left(r_{a,1}+ \frac{c_{r_2}r_{a,2}+ r_{a,\Sigma}}{c_{r_1}\sqrt{s}}\right)\right)\|\vectheta_{\eta,\mc{J}_{\vecbeta^*}}\|_1
+\left(-\lambda_s+  \left(r_{a,1}+ \frac{c_{r_2}r_{a,2}+ r_{a,\Sigma}}{c_{r_1}\sqrt{s}}\right)\right)\|\vectheta_{\eta,\mc{J}^c_{\vecbeta^*}}\|_1.
\end{align}
Then, we have
	\begin{align}
		\label{ine:preshelling}
		\| \vectheta_{\eta,\mc{J}_{\vecbeta^*}^c}\|_1 \leq \frac{\lambda_s +  \left(r_{a,1}+ \frac{c_{r_2}r_{a,2}+ r_{a,\Sigma}}{c_{r_1}\sqrt{s}}\right)}{\lambda_s -  \left(r_{a,1}+ \frac{c_{r_2}r_{a,2}+ r_{a,\Sigma}}{c_{r_1}\sqrt{s}}\right) }\| \vectheta_{\eta,\mc{J}_{\vecbeta^*}}\|_1\stackrel{(a)}{\leq} c_{\mr{RE}} \| \vectheta_{\eta,\mc{J}_{\vecbeta^*}}\|_1,
	\end{align}
	where (a) follows from \eqref{ine:det:main:0-3}, and  we have
	\begin{align}
		\|\vectheta_\eta\|_1 = \| \vectheta_{\eta,\mc{J}_{\vecbeta^*}}\|_1 +\| \vectheta_{\eta,\mc{J}_{\vecbeta^*}^c}\|_1 \leq  (1+c_{\mr{RE}})\| \vectheta_{\eta,\mc{J}_{\vecbeta^*}}\|_1\leq (1+c_{\mr{RE}})\sqrt{s}\| \vectheta_{\eta,\mc{J}_{\vecbeta^*}}\|_2.
	\end{align}
	From \eqref{ine:preshelling} and the restricted eigenvalue condition, we have
	\begin{align}
		\|\vectheta_\eta\|_1 \leq  (1+c_{\mr{RE}})\sqrt{s}\| \vectheta_{\eta,\mc{J}_{\vecbeta^*}}\|_2\leq  \frac{1+c_{\mr{RE}}}{\mathfrak{r}}\sqrt{s}\|\Sigma^\frac{1}{2} \vectheta_\eta\|_2.
	\end{align}

	\subsection{Proof of Lemma \ref{l:mainpre2-1}}
	From the same argument of the proof of Lemma \ref{l:mainpre1-1}, we have \eqref{ine:det:lemma:3}.
	From \eqref{ine:det:lemma:3}, we have
	\begin{align}
		\label{ine:det:lemma3:1}
		&\sum_{i=1}^n \lambda_o\sqrt{n} \hat{w}_i'\left(-h(r_{\vecbeta^*+\vectheta_\eta,i}) +h(r_{\vecbeta^*,i})\right) \vecX_i^\top \vectheta_\eta \leq \sum_{i=1}^n \lambda_o\sqrt{n} \hat{w}_i' h(r_{\vecbeta^*,i}) \vecX_i^\top \vectheta_\eta+\eta \lambda_s(\|\vecbeta^*\|_1-\|\hat{\vecbeta}\|_1).
	\end{align}
	We evaluate each term of \eqref{ine:det:lemma3:1}. From~\eqref{ine:det:main:0-2}, the left-hand side  of~\eqref{ine:det:lemma3:1} is evaluated as
	\begin{align}
		\sum_{i=1}^n \lambda_o\sqrt{n} \hat{w}_i'\left(-h(r_{\vecbeta^*+\vectheta_\eta,i}) +h(r_{\vecbeta^*,i})\right) \vecX_i^\top \vectheta_\eta &\geq b \|\Sigma^\frac{1}{2}\vectheta_\eta\|_2^2 -r_{b,1}r_1-r_{b,2}r_2-r_{b,\Sigma}r_\Sigma\nonumber\\
		&\stackrel{(a)}{\geq } b \|\Sigma^\frac{1}{2}\vectheta_\eta\|_2^2 -\left(c_{r_1}\sqrt{s}r_{b,1}-c_{r_2}r_{b,2} -r_{b,\Sigma}\right)\|\Sigma^\frac{1}{2}\vectheta_\eta\|_2,
	\end{align}
	where (a) follows from  $r_1 = c_{r_1}\sqrt{s}r_\Sigma $ and  $r_2 =c_{r_2} r_\Sigma $ .
	From~\eqref{ine:det:main:0-1}, the first term of the right-hand side of \eqref{ine:det:lemma3:1} is evaluated as
	\begin{align}
		\sum_{i=1}^n \lambda_o\sqrt{n} \hat{w}_i' h(r_{\vecbeta^*,i}) \vecX_i^\top \vectheta_\eta&\leq r_{a,1}r_1+ r_{a,2}r_2+r_{a,\Sigma}r_{\Sigma}\nonumber \\
		&\stackrel{(a)}\leq \left( c_{r_1}\sqrt{s}r_{a,1}+
		c_{r_2}r_{a,2} + r_{a,\Sigma}\right)\|\Sigma^\frac{1}{2}\vectheta_\eta\|_2,
	\end{align}
	where (a) follows from  $r_1 = c_{r_1}\sqrt{s}r_\Sigma $ and  $r_2 =c_{r_2} r_\Sigma $.
	The second term of the right-hand side of \eqref{ine:det:lemma3:1} is evaluated as
	\begin{align}
		\eta \lambda_s(\|\vecbeta^*\|_1-\|\hat{\vecbeta}\|_1) \leq \lambda_s \|\vectheta_\eta\|_1 \stackrel{(a)}{\leq} c_{r_1}\sqrt{s}\lambda_s \|\Sigma^\frac{1}{2} \vectheta_\eta\|_2,
	\end{align}
	where (a) follows from  $r_1 = c_{r_1}\sqrt{s}r_\Sigma $.

	Combining the inequalities above, we have
	\begin{align}
		b \|\Sigma^\frac{1}{2}\vectheta_\eta\|_2^2&\leq  \left(c_{r_1}\sqrt{s}(r_{a,1}+r_{b,1})+
		c_{r_2}(r_{a,2}+r_{b,2}) +r_{a,\Sigma}+r_{b,\Sigma} + c_{r_1}\sqrt{s}\lambda_s\right)\|\Sigma^\frac{1}{2}\vectheta_\eta\|_2,
	 \end{align}
	 and from $\|\Sigma^\frac{1}{2}\vectheta_\eta\|_2\geq 0$, we have
	\begin{align}
		\|\Sigma^\frac{1}{2}\vectheta_\eta\|_2 &\leq \frac{1}{b} \left(c_{r_1}\sqrt{s}(r_{a,1}+r_{b,1})+
		c_{r_2}(r_{a,2}+r_{b,2}) +r_{a,\Sigma}+r_{b,\Sigma} + c_{r_1}\sqrt{s}\lambda_s\right),
	\end{align}
	and the proof is complete.

\section{Proof of Proposition~\ref{p:main}}
\label{sec:mainproof}
\subsection{Step1}
\label{subsec:mainpropstep1}
We derive a contradiction if $\|\vectheta\|_1> r_1$,  $\|\vectheta\|_2> r_2$ and $\|\Sigma^\frac{1}{2}\vectheta\|_2> r_{\Sigma}$ hold. Assume that  $\|\vectheta\|_1> r_1$,  $\|\vectheta\|_2> r_2$ and $\|\Sigma^\frac{1}{2}\vectheta\|_2> r_{\Sigma}$. Then we can find  $\eta_1,\,\eta_2,\eta_2'\in(0,1)$ such that $\|\vectheta_{\eta_1}\|_1 = r_1$, $\|\vectheta_{\eta_2}\|_2 = r_2$ and $\|\Sigma^\frac{1}{2}\vectheta_{\eta_2'}\|_2 = r_{\Sigma}$ hold.
Define $\eta_3 = \min\{\eta_1,\eta_2, \eta_2'\}$.
We consider the case $\eta_3 = \eta_2'$ in Section \ref{subsubsec:mainpropstep1a},  the case $\eta_3 = \eta_2$ in Section \ref{subsubsec:mainpropstep1b}, and the case $\eta_3 = \eta_1$ in Section \ref{subsubsec:mainpropstep1c}.

\subsubsection{Step 1(a)}
\label{subsubsec:mainpropstep1a}
Assume that $\eta_3 = \eta_2'$. We see that $\|\Sigma^\frac{1}{2}\vectheta_{\eta_3}\|_2 = r_{\Sigma}$, $\|\vectheta_{\eta_3}\|_1 \leq r_1$ and $\|\vectheta_{\eta_3}\|_2 \leq r_2$ hold. Then, from  Lemma~\ref{l:mainpre2-1},  we have 
\begin{align}
\|\Sigma^\frac{1}{2}\vectheta_{\eta_3}\|_2 &\leq \frac{1}{b} \left(c_{r_1}\sqrt{s}(r_{a,1}+r_{b,1})+
c_{r_2}(r_{a,2}+r_{b,2}) +r_{a,\Sigma}+r_{b,\Sigma} + c_{r_1}\sqrt{s}\lambda_s\right).
\end{align}
The case $\eta_3=\eta_2'$ is a contradiction from 
$\|\Sigma^\frac{1}{2}\vectheta_{\eta_3}\|_2 = r_\Sigma$
and \eqref{ine:det:main:0-4}.

\subsubsection{Step 1(b)}
\label{subsubsec:mainpropstep1b}
Assume that $\eta_3 = \eta_2$. We see that  $\|\Sigma^\frac{1}{2}\vectheta_{\eta_3}\|_2 \leq r_{\Sigma}$, $\|\vectheta_{\eta_3}\|_1 \leq r_1$ and $\|\vectheta_{\eta_3}\|_2 = r_2$ hold. Then, from  Lemma~\ref{l:mainpre1-1}, we have 
\begin{align}
\|\vectheta_{\eta_3}\|_2 &\leq \frac{3+c_{\mr{RE}}}{\kappa_{\mr{l}}}\|\Sigma^\frac{1}{2}\vectheta_{\eta_3}\|_2 \leq \frac{3+c_{\mr{RE}}}{\kappa_{\mr{l}}}r_{\Sigma}.
\end{align}
The case $\eta_3=\eta_2$ is a contradiction from
$\|\vectheta_{\eta_3}\|_2 = r_2 $ and \eqref{ine:det:main:0-4}.

\subsubsection{Step 1(c)}
\label{subsubsec:mainpropstep1c}
Assume that $\eta_3 = \eta_1$. We see that  $\|\Sigma^\frac{1}{2}\vectheta_{\eta_3}\|_2 \leq r_{\Sigma}$, $\|\vectheta_{\eta_3}\|_1 = r_1$ and $\|\vectheta_{\eta_3}\|_2 \leq r_2$ hold. 
Then, from Lemma \ref{l:mainpre1-2}, for $\eta = \eta_{3}$, we have
\begin{align}
\|\vectheta_{\eta_3}\|_1\leq \frac{1+c_{\mr{RE}}}{\mathfrak{r}}\sqrt{s}\|\Sigma^\frac{1}{2}\vectheta_{\eta_3}\|_2 \leq  \frac{1+c_{\mr{RE}}}{\mathfrak{r}}\sqrt{s}r_\Sigma.
\end{align}
The case $\eta_3=\eta_1$ is a contradiction from
$\|\vectheta_{\eta_3}\|_1 =r_1$ and  \eqref{ine:det:main:0-4}.
\subsection{Step 2}
\label{asubsec:2}
From the arguments in Section \ref{subsec:mainpropstep1}, we have $\|\Sigma^\frac{1}{2}\vectheta\|_2 \leq r_{\Sigma,}$ or $\|\vectheta\|_1 \leq r_1$ or $\|\vectheta\|_2 \leq r_2$  holds.
\begin{itemize}
	\item[(a)]In Section \ref{asubsubsec:2-1}, assume that  $\|\Sigma^\frac{1}{2}\vectheta\|_2 \leq r_{\Sigma}$ and $\|\vectheta\|_1 > r_1$ and $\|\vectheta\|_2 > r_2$
	hold and then derive a contradiction.
 \item[(b)]In Section \ref{asubsubsec:2-2}, assume that  $\|\Sigma^\frac{1}{2}\vectheta\|_2 > r_{\Sigma}$ and $\|\vectheta\|_1 \leq r_1$ and $\|\vectheta\|_2 > r_2$
hold and then derive a contradiction.
\item[(c)]In Section \ref{asubsubsec:2-3}, assume that  $\|\Sigma^\frac{1}{2}\vectheta\|_2 > r_{\Sigma}$ and $\|\vectheta\|_1 >r_1$ and $\|\vectheta\|_2 \leq r_2$
hold and then derive a contradiction.
\item[(d)]In Section \ref{asubsubsec:2-4}, assume that  $\|\Sigma^\frac{1}{2}\vectheta\|_2 > r_{\Sigma}$ and $\|\vectheta\|_1 \leq r_1$ and $\|\vectheta\|_2 \leq r_2$
hold and then derive a contradiction.
\item[(e)]In Section \ref{asubsubsec:2-5}, assume that  $\|\Sigma^\frac{1}{2}\vectheta\|_2 \leq r_{\Sigma}$ and $\|\vectheta\|_1 > r_1$ and $\|\vectheta\|_2 \leq r_2$
hold and then derive a contradiction.
\item[(f)]In Section \ref{asubsubsec:2-6}, assume that  $\|\Sigma^\frac{1}{2}\vectheta\|_2 \leq r_{\Sigma}$ and $\|\vectheta\|_1 \leq r_1$ and $\|\vectheta\|_2 > r_2$
hold and then derive a contradiction.
\end{itemize}
Finally, we have
\begin{align}
	\|\Sigma^\frac{1}{2}(\hat{\vecbeta}-\vecbeta^*)\|_2 \leq r_\Sigma \text {, }\|\hat{\vecbeta}-\vecbeta^*\|_2 \leq r_2 \text {, and }	\|\hat{\vecbeta}-\vecbeta^*\|_1 \leq r_1,
\end{align}
and the proof is complete.
\subsubsection{Step 2(a)}
\label{asubsubsec:2-1}
Assume that $\|\Sigma^\frac{1}{2}\vectheta\|_2 \leq r_{\Sigma,}$ and $\|\vectheta\|_1 > r_1$ and $\|\vectheta\|_2 > r_2$ hold, and
then we can find $\eta_4, \eta_4'\in (0,1)$ such that $\|\vectheta_{\eta_4}\|_1 = r_1$ and $\|\vectheta_{\eta_4'}\|_2 = r_2$ hold.
We note that $\|\Sigma^\frac{1}{2}\vectheta_{\eta_4}\|_2\leq r_\Sigma$ and $\|\Sigma^\frac{1}{2}\vectheta_{\eta_4'}\|_2\leq r_\Sigma$ also hold.
Then, from the same arguments of Sections \ref{subsubsec:mainpropstep1b} and \ref{subsubsec:mainpropstep1c}, we have a contradiction.

\subsubsection{Step 2(b)}
\label{asubsubsec:2-2}
Assume that $\|\Sigma^\frac{1}{2}\vectheta\|_2 > r_{\Sigma}$ and $\|\vectheta\|_1 \leq r_1$ and $\|\vectheta\|_2 > r_2$ hold, and
then we can find $\eta_5, \eta_5'\in (0,1)$ such that $\|\Sigma^\frac{1}{2}\vectheta_{\eta_5}\|_2 = r_{\Sigma}$ and $\|\vectheta_{\eta_5'}\|_2 = r_2$ hold.
We note that $\|\vectheta_{\eta_5}\|_1\leq r_1$ and $\|\vectheta_{\eta_5'}\|_1\leq r_1$  also hold.
Then, from the same arguments of Sections \ref{subsubsec:mainpropstep1a} and  \ref{subsubsec:mainpropstep1b}, we have a contradiction.

\subsubsection{Step 2(c)}
\label{asubsubsec:2-3}
Assume that $\|\Sigma^\frac{1}{2}\vectheta\|_2 > r_{\Sigma}$ and $\|\vectheta\|_1 >r_1$ and $\|\vectheta\|_2 \leq r_2$ hold and,
then we can find $\eta_6,\eta_6'\in (0,1)$ such that $\|\vectheta_{\eta_6}\|_1=r_1$ and  $\|\Sigma^\frac{1}{2}\vectheta_{\eta_6'}\|_2 =r_\Sigma$ hold.
We note that $\|\vectheta_{\eta_6}\|_2\leq r_2$ and $\|\vectheta_{\eta_6'}\|_2\leq r_2$  also hold.
Then, from the same arguments of Sections \ref{subsubsec:mainpropstep1a} and  \ref{subsubsec:mainpropstep1c}, we have a contradiction.

\subsubsection{Step 2(d)}
\label{asubsubsec:2-4}
Assume that $\|\Sigma^\frac{1}{2}\vectheta\|_2 > r_{\Sigma}$ and $\|\vectheta\|_1 \leq r_1$ and $\|\vectheta\|_2 \leq r_2$ hold and,
then we can find $\eta_7\in (0,1)$ such that  $\|\Sigma^\frac{1}{2}\vectheta_{\eta_7}\|_2 = r_{\Sigma}$ holds.
We note that $\|\vectheta_{\eta_7}\|_1\leq r_1$ and  $\|\vectheta_{\eta_7}\|_2\leq r_2$ also hold.
Then, from the same arguments of Section \ref{subsubsec:mainpropstep1a}, we have  a contradiction.

\subsubsection{Step 2(e)}
\label{asubsubsec:2-5}
Assume that $\|\Sigma^\frac{1}{2}\vectheta\|_2 \leq r_{\Sigma}$ and $\|\vectheta\|_1 > r_1$ and $\|\vectheta\|_2 \leq r_2$ hold and,
then we can find $\eta_8\in (0,1)$ such that $\|\vectheta_{\eta_8}\|_1=r_1$ holds.
We note that $\|\Sigma^\frac{1}{2}\vectheta_{\eta_8}\|_2\leq r_\Sigma$ and $\|\vectheta_{\eta_8}\|_2\leq r_2$ also hold.
Then, from the same arguments of Section \ref{subsubsec:mainpropstep1c}, we have  a contradiction.

\subsubsection{Step 2(f)}
\label{asubsubsec:2-6}
Assume that $\|\Sigma^\frac{1}{2}\vectheta\|_2 \leq r_{\Sigma}$ and $\|\vectheta\|_1 \leq r_1$ and $\|\vectheta\|_2 > r_2$hold and,
then we can find $\eta_9\in (0,1)$ such that $\|\vectheta_{\eta_9}\|_2=r_2$ holds.
We note that $\|\Sigma^\frac{1}{2}\vectheta_{\eta_9}\|_2\leq r_\Sigma$  and $\|\vectheta_{\eta_9}\|_1\leq r_1$ also hold.
Then, from the same arguments of Section \ref{subsubsec:mainpropstep1b}, we have  a contradiction.

\section{Proofs of  Lemmas \ref{l:w2}, \ref{l:gwslope:main}, \ref{l:convl0l1} and \ref{l:tg-eval}}
\label{sec:p1}

\subsection{Proof of Lemma \ref{l:w2}}
We assume $|I_{<}| > 2\varepsilon n$, and then we derive a contradiction.
 From the constraint about $w_i$, we have $0\leq w_i \leq \frac{1}{\left(1-\varepsilon\right)n}$ for any $i \in\{1,\cdots,n\}$ and we have
 \begin{align}
	\sum_{i=1}^n w_i &= \sum_{i \in I_{<}} w_i+ \sum_{i \in I_{\geq}} w_i \nonumber\\&\leq |I_{_<}| \times \frac{1+}{2n} + (n-|I_{<}|) \times \frac{1}{\left(1-\varepsilon\right)n}\nonumber\\
	& = 2\varepsilon n \times \frac{1}{2n} + (|I_{<}|-2\varepsilon n) \times \frac{1+\varepsilon}{2n} + (n-2\varepsilon n) \times \frac{1}{\left(1-\varepsilon\right)n} +(2\varepsilon n -|I_{<}|) \times \frac{1}{\left(1-\varepsilon\right)n}\nonumber\\
	&= \varepsilon + (n-2\varepsilon n) \times \frac{1}{\left(1-\varepsilon\right)n} +(|I_{<}|-2\varepsilon n) \times\left(\frac{1}{2n}-\frac{1}{\left(1-\varepsilon\right)n}\right)\nonumber\\
	&< \varepsilon + \frac{n-2\varepsilon n}{\left(1-\varepsilon\right)n}\nonumber\\
	&= \varepsilon + \frac{1-2\varepsilon }{1-\varepsilon}\nonumber\\
	&\leq \frac{1-\varepsilon -\varepsilon^2}{1-\varepsilon}\nonumber \\
	&< 1.
 \end{align}
This is a contradiction because $	\sum_{i=1}^n w_i=1$. Then, we have $|I_{<}| \leq 2\varepsilon n$.

\subsection{Proofs of Lemmas \ref{l:gwslope:main},  \ref{l:convl0l1} and  \ref{l:tg-eval}}
\label{asec:prooftg-eval}

\subsubsection{Proof of Lemma \ref{l:gwslope:main}}
The following argument is essentially identical to a part of the proof of Proposition 9 of \cite{Bel2019Localized}.
We note that
\begin{align}
	r_1 \mbb{B}^d_1\cap r_\Sigma \mbb{B}^d_\Sigma\subset 	2r_1 \mbb{B}^d_1\cap r_\Sigma \mbb{B}^d_\Sigma &=2r_1 \left( 1\times \mbb{B}^d_1\cap \frac{r_\Sigma}{2r_1}\mbb{B}^d_\Sigma \right),
\end{align}
and we have
\begin{align}
	\mbb{E}\sup_{\vecv \in r_1 \mbb{B}^d_1\cap r_\Sigma \mbb{B}^d_\Sigma} \langle \Sigma^\frac{1}{2}  \vecg,\vecv\rangle 
	\leq\mbb{E}\sup_{\vecv \in 	2r_1 \left( 1\times \mbb{B}^d_1\cap \frac{r_\Sigma}{2r_1}\mbb{B}^d_\Sigma \right)} \langle \Sigma^\frac{1}{2} \vecg,\vecv\rangle.
\end{align}
Let $\{\vecu_1,\cdots,\vecu_d\}$ be the columns of $\Sigma^{\frac{1}{2} \top}$. Then, we have
\begin{align}
\mbb{E}\sup_{\vecv \in 	2r_1 \left( 1\times \mbb{B}^d_1\cap \frac{r_\Sigma}{2r_1}\mbb{B}^d_\Sigma \right)} \langle \Sigma^\frac{1}{2}  \vecg, \vecv\rangle&\leq \mbb{E}\sup_{\vecv \in 	2r_1 \left(\mr{conv}\left(\{\pm\vecu_1,\cdots,\pm\vecu_d\}\right)\cap \frac{r_\Sigma}{2r_1} \mbb{B}_2\right)} \langle \vecg, \vecv\rangle\nonumber\\
&\leq \mbb{E}\sup_{\vecv \in 	2r_1 \rho\left(\mr{conv}\left( \frac{1}{\rho}\{\pm\vecu_1,\cdots,\pm\vecu_d\}\right)\cap \frac{r_\Sigma}{2r_1} \mbb{B}_2\right)} \langle \vecg, \vecv\rangle.
\end{align}
From Proposition 1 of \cite{Bel2019Localized}, we have
\begin{align}
	\label{ine:order}
\mbb{E}\sup_{\vecv \in 2r_1 \rho\left(\mr{conv}\left( \frac{1}{\rho}\{\pm\vecu_1,\cdots,\pm\vecu_d\}\right)\cap \frac{r_\Sigma}{2r_1} \mbb{B}_2\right)} \langle \vecg,\vecv\rangle&\leq 	4\rho r_1 \sqrt{\log \left(4ed \left(\frac{r_\Sigma}{2r_1}\right)^2\right)}\nonumber\\
	&= 	4\rho r_1 \sqrt{\log \left(\frac{ed}{sc^2_{r_1}}\right)}\nonumber\\
	&\stackrel{(a)}{\leq}	C\rho r_1 \sqrt{\log (d/s)},
\end{align}
where (a) follows from $\mathfrak{r}\leq 1$.

\subsubsection{Proof of Lemma \ref{l:convl0l1}}
First, we prove \eqref{ine:convvec}.
For any $\vecv \in r_1 \mbb{B}^{d}_1 \cap r_2 \mbb{B}^{d}_{2}$, we see that 
\begin{align}
	\frac{\vecv}{r_2} \in \left\{\vecv \in \mbb{R}^d \mid \|\vecv\|_2\leq 1,\,\|\vecv\|_1 \leq \frac{r_1}{r_2}\right\} = \frac{r_1}{r_2} \mbb{B}^{d}_1 \cap  \mbb{B}^{d}_{2}.
\end{align}
From Lemma 3.1 of \cite{PlaVer2013One}, we have
\begin{align}
	\frac{r_1}{r_2} \mbb{B}^{d}_1 \cap  \mbb{B}^{d}_{2} \subset 2 \mr{conv}\left(\left(\frac{r_1}{r_2}\right)^2 \mbb{B}^d_0 \cap \mbb{B}^d_2\right),
\end{align}
and 
\begin{align}
	\label{ine:PVL3}
	\frac{\vecv}{r_2} \in  2 \mr{conv}\left(\left(\frac{r_1}{r_2}\right)^2 \mbb{B}^d_0 \cap \mbb{B}^d_2\right).
\end{align}
From the definition of the convex hull, this means 
\begin{align}
	\frac{\vecv}{r_2} =  2 \sum_{i=1}^a \lambda_i \veca_i.
\end{align}
for some $\{\lambda_i\}_{i=1}^a$ and $\{\veca_i\}_{i=1}^a$ such that $\sum_{i=1}^a \lambda_i = 1,\,\lambda_i\geq 0$ and  $\veca_i \in \left(\frac{r_1}{r_2}\right)^2 \mbb{B}^d_0 \cap \mbb{B}^d_2$ for any $i \in \{1,\cdots,a\}$.
We note that 
\begin{align}
	\label{ine:normscale}
	\|r_2 \veca_i\|_0 \leq \left(\frac{r_1}{r_2}\right)^2,\quad \|r_2 \veca_i\|_2 \leq r_2.
\end{align}
From \eqref{ine:PVL3} and \eqref{ine:normscale}, we see 
\begin{align}
	\vecv \in 2 \mr{conv}\left(\left(\frac{r_1}{r_2}\right)^2 \mbb{B}^d_0 \cap r_2\mbb{B}^d_2\right).
\end{align}
Then the proof of \eqref{ine:convvec} is complete.
Identifying the vectors and the matrices,  the proof of  \eqref{ine:convmat}  is also complete.
\subsubsection{Proof of Lemma \ref{l:tg-eval}}
We note that 
\begin{align}
	\label{ine:eleint}
	\int^\infty_0 \frac{x}{e^x}dx &=1,\,\,\int^\infty_0 \frac{\sqrt{x}}{e^x}dx \leq \int^1_0 \frac{1}{e^x}dx +\int^\infty_0 \frac{x}{e^x}dx \leq \left[\frac{-1}{e^x}\right]^\infty_0+1 =2.
\end{align}
First, we evaluate $\gamma_1(s^2 \mbb{B}^{d\times d}_0 \cap r_2^2 \mbb{B}^{d\times d}_{\mr{F}} ,\|\cdot\|_{\mr{F}})$.
From a standard entropy bound from chaining theory Lemma D.17 of \cite{Oym2018Learning}, we have
\begin{align}
	\label{ine:dud}
	\gamma_1(s^2 \mbb{B}^{d\times d}_0 \cap r_2^2 \mbb{B}^{d\times d}_{\mr{F}},\|\cdot\|_{\mr{F}}) \leq C\int_0^{r_2^2} \log N(s^2 \mbb{B}^{d\times d}_0 \cap r_2^2 \mbb{B}^{d\times d}_{\mr{F}} ,\epsilon)d\epsilon,
\end{align}
where $ N(s^2 \mbb{B}^{d\times d}_0 \cap r_2^2 \mbb{B}^{d\times d}_{\mr{F}},\epsilon)$ is the covering number of $s^2 \mbb{B}^{d\times d}_0 \cap r_2^2 \mbb{B}^{d\times d}_{\mr{F}} $, that is the minimal cardinality of an $\epsilon$-net of $s^2 \mbb{B}^{d\times d}_0 \cap r_2^2 \mbb{B}^{d\times d}_{\mr{F}}$.
From Lemma Lemma \ref{l:convl0l1} and $d/s\geq 3$, we have 
\begin{align}
	\int_0^{r_2^2} \log N(s^2 \mbb{B}^{d\times d}_0 \cap r_2^2 \mbb{B}^{d\times d}_{\mr{F}},\epsilon)d\epsilon &\leq C\int_0^{r_2^2} s^2\log r_2^2 \frac{9d^2}{\epsilon s^2}d \epsilon\nonumber\\
	&= Cs^2\int_0^{r_2^2} \left(\log (9d^2/s^2)+  \log(r_2^2/\epsilon)\right)d\epsilon\nonumber\\
	&=Cs^2\left(r_2^2\log (9d^2/s^2)+ r_2^2 \int_1^{\infty} \frac{x}{e^x}dx\right) \nonumber\\
	&\leq Cs^2\left(r_2^2\log (9d^2/s^2)+ r_2^2 \int_0^{\infty} \frac{x}{e^x}dx\right) \nonumber\\
	&\stackrel{(a)}{=} Cs^2\left(r_2^2 \log (9d^2/s^2)+ r_2^2\right) \nonumber\\
	& \stackrel{(b)}{\leq}Cs^2r_2^2 \log (d/s),
\end{align}
where (a) follows from \eqref{ine:eleint}, and (b) follows from $3\leq d/s$.
Consequently, we have
\begin{align}
	\gamma_1(s^2 \mbb{B}^{d\times d}_0 \cap r_2^2 \mbb{B}^{d\times d}_{\mr{F}},\|\cdot\|_{\mr{F}}) \leq Cs^2r_2^2 \log (d/s).
\end{align}

Second, we evaluate $\gamma_2(s^2 \mbb{B}^{d\times d}_0 \cap r_2^2 \mbb{B}^{d\times d}_{\mr{F}},\|\cdot\|_{\mr{F}})$. From  similar argument of the case $\gamma_1(s^2 \mbb{B}^{d\times d}_0 \cap r_2^2 \mbb{B}^{d\times d}_{\mr{F}},\|\cdot\|_{\mr{F}})$, we have
\begin{align}
	\int_0^{r_2^2}\sqrt{ \log N(s^2 \mbb{B}^{d\times d}_0 \cap r_2^2 \mbb{B}^{d\times d}_{\mr{F}},\epsilon)}d\epsilon &\leq C\int_0^{r_2^2} s\sqrt{\log r_2^2\frac{9d^2}{\epsilon  s^2}}d \epsilon\nonumber\\
	&\stackrel{(a)}{\leq} C\left(s \int^{r_2^2}_0  \sqrt{\log (d/s)}+\int^{r_2^2}_1 \sqrt{\log \frac{r_2^2}{\epsilon}}d \epsilon\right)\nonumber\\
	&\leq C\left(s r_2^2  \sqrt{\log (d/s)}+\int^{r_2^2}_1 \sqrt{\log \frac{r_2^2}{\epsilon}}d \epsilon\right)\nonumber\\
	&\leq C\left(s r_2^2  \sqrt{\log (d/s)}+\int^{r_2^2}_0 \sqrt{\log \frac{r_2^2}{\epsilon}}d \epsilon\right)\nonumber\\
	&= C\left(s r_2^2  \sqrt{\log (d/s)}+\int^\infty_0 \frac{\sqrt{x}}{e^x}dx\right)\nonumber\\
	&\stackrel{(b)}{\leq} Csr_2^2 \sqrt{\log (d/s)},
\end{align}
where (a) follows from triangular inequality, and (b) follows from \eqref{ine:eleint} and $d/s\geq 3$.

\section{Proofs of Corollary \ref{c:cwpre} and   Propositions \ref{p:main:out}, \ref{p:main:out2},\ref{p:main:sc}, \ref{p:main:out-un} and \ref{p:main:out2-un}}
\label{sec:proofkeyL}
Define 
\begin{align}
\mathfrak{M}_{r_1,r_2,d,\vecv}^{\ell_1,\ell_2} = \{M\in \mc{S}(d)\,:\, M = \vecv \vecv^\top, \vecv \in \mbb{R}^d,\, \,\|\vecv\|_1 \leq r_1, \|\vecv\|_2 \leq r_2\}.
\end{align}
\subsection{Proof  of Corollary \ref{c:cwpre}}
First, we prove \eqref{ine:cwpre2-1}.
We see that, for any $M \in \mathfrak{M}_{r_1,r_2,d}^{\ell_1,\mr{Tr}} $,
\begin{align}
\label{ine:unknownbase}
	\left|\frac{1}{n}\sum_{i=1}^n\langle \vecx_i \vecx_i^\top, M\rangle \right|
	&\leq \left|\frac{1}{n}\sum_{i=1}^n\langle \vecx_i \vecx_i^\top-\Sigma, M\rangle \right|+\left|\langle \Sigma, M\rangle \right|\nonumber \\
	&\leq  \left|\frac{1}{n}\sum_{i=1}^n\langle \vecx_i \vecx_i^\top-\Sigma, M\rangle \right|+\Sigma_{\max}^2 r_2^2,
\end{align}
where we use H{\"o}lder's inequality.
From \eqref{cons2} and \eqref{ine:unknownbase}, with probability at least $1-\delta$, we have
\begin{align}
	\max_{M\in\mathfrak{M}_{r_1,r_2,d}^{\ell_1,\mr{Tr}} }\left|\frac{1}{n}\sum_{i=1}^n\langle \vecx_i \vecx_i^\top, M\rangle \right|
	&\leq C(L\kappa_{\mr{u}})^2\left(sr_{d,s}+r_\delta \right)r_2^2+\Sigma_{\max}^2r_2^2.
\end{align}

Next, we prove \eqref{ine:cwpre2-2}. Remember \eqref{ine:weight}, we have
\begin{align}
\label{ine:optM0un}
	\max_{M\in\mathfrak{M}_{r_1,r_2,d}^{\ell_1,\mr{Tr}}  }\sum_{i =1 }^n \hat{w}_i\langle \vecX_i\vecX_i^\top, M\rangle 
	&\stackrel{(a)}{\leq} \max_{M\in\mathfrak{M}_{r_1,r_2,d}^{\ell_1,\mr{Tr}} }\sum_{i =1 }^n w_i^\circ \langle \vecX_i\vecX_i^\top, M\rangle \nonumber \\
	&= \frac{1}{1-o/n} \max_{M\in\mathfrak{M}_{r_1,r_2,d}^{\ell_1,\mr{Tr}} }\sum_{i =1 }^n \frac{1}{n}\langle \vecx_i\vecx_i^\top, M\rangle,
\end{align}
where (a) follows from the optimality of $\{\hat{w}_i\}_{i=1}^n$ and $\{w_i^\circ\}_{i=1}^n \in\Delta^{n-1}(\varepsilon)$.
From  \eqref{ine:cwpre2-1}, \eqref{ine:optM0un} and  $1/(1-o/n)\leq 2$,  with probability at least $1-\delta$, we have
\begin{align}
	\max_{M\in\mathfrak{M}_{r_1,r_2,d}^{\ell_1,\mr{Tr}} }\sum_{i =1 }^n \hat{w}_i\langle \vecX_i\vecX_i^\top, M\rangle 
	&\leq  C\left((L\kappa_{\mr{u}})^2\left(sr_{d,s}+r_\delta \right)+\Sigma_{\max}^2\right)r_2^2,
\end{align}
and from $L\geq 1$, the proof is complete.

\subsection{Proof of Proposition \ref{p:main:out}}
We have
\begin{align}
	\label{ine:Efirst-0}
	\max_{\vecv \in r_1 \mbb{B}^d_1 \cap r_2 \mbb{B}^d_2} \left|\sum_{i \in \mc{O}}\hat{w}'_iu_i \vecX_i^\top\vecv\right| &\stackrel{(a)}{\leq}  \max_{\vecv \in 2 \mr{conv}\left(\left(\frac{r_1}{r_2}\right)^2 \mbb{B}^d_0 \cap r_2 \mbb{B}^d_2\right)}\left|\sum_{i \in \mc{O}}\hat{w}'_iu_i \vecX_i^\top\vecv\right|\nonumber\\
	&  \leq \max_{\vecv \in \mr{conv}\left(\left(\frac{r_1}{r_2}\right)^2 \mbb{B}^d_0 \cap r_2 \mbb{B}^d_2\right)}2\left|\sum_{i \in \mc{O}}\hat{w}'_iu_i \vecX_i^\top\vecv \right|\nonumber\\
	&\stackrel{(b)}{\leq} 2 \max_{\vecv \in \left(\frac{r_1}{r_2}\right)^2  \mbb{B}^d_0 \cap r_2 \mbb{B}^d_2}\left|\sum_{i \in \mc{O}}\hat{w}'_iu_i \vecX_i^\top\vecv \right|,
\end{align}
where (a) follows from Lemma \ref{l:convl0l1}, and (b) follows from Lemma D.8 of \cite{Oym2018Learning}.
We note that, for any $\vecv \in \mbb{R}^d$,
\begin{align}
	\label{ine:Efirst}
	\left|\sum_{i \in \mc{O}}\hat{w}'_iu_i \vecX_i^\top\vecv \right|^2 &\stackrel{(a)}{\leq} 4 \frac{o}{n}\sum_{i \in \mc{O}}\hat{w}'_i |\vecX_i^\top \vecv|^2\stackrel{(b)}{\leq} 8 \frac{o}{n}\sum_{i\in \mc{O}}\hat{w}_i |\vecX_i^\top \vecv|^2,
\end{align}
where (a) follows from H{\"o}lder's inequality and $\sum_{i \in \mc{O}}u_i^2 \leq 4o$, and (b) follows from the fact that $\hat{w}_i'\leq 2\hat{w}_i$ for any $i\in (1,\cdots,n)$.
We focus on $\sum_{i \in \mc{O}}\hat{w}_i |\vecX_i^\top \vecv|^2$.

First,  we have
\begin{align}
	\label{ine:optM-pre}
	&\max_{\vecv \in \left(\frac{r_1}{r_2}\right)^2 \mbb{B}^d_0 \cap r_2 \mbb{B}^d_2}\sum_{i\in \mc{O}}\hat{w}_i |\vecX_i^\top \vecv|^2 \nonumber\\
	&= \max_{\vecv \in \left(\frac{r_1}{r_2}\right)^2 \mbb{B}^d_0 \cap r_2 \mbb{B}^d_2}\sum_{i\in \mc{O}}\hat{w}_i \langle \vecX_i \vecX_i^\top  ,\vecv \vecv^\top \rangle \nonumber\\
	&\leq \max_{\vecv \in \left(\frac{r_1}{r_2}\right)^2 \mbb{B}^d_0 \cap r_2 \mbb{B}^d_2}\sum_{i=1}^n \hat{w}_i \langle \vecX_i \vecX_i^\top  ,\vecv \vecv^\top \rangle +	 \max_{\vecv \in \left(\frac{r_1}{r_2}\right)^2 \mbb{B}^d_0 \cap r_2 \mbb{B}^d_2}\sum_{i\in \mc{I}}\hat{w}_i \langle \vecX_i \vecX_i^\top  ,\vecv \vecv^\top \rangle\nonumber\\
	&= \max_{\vecv \in \left(\frac{r_1}{r_2}\right)^2 \mbb{B}^d_0 \cap r_2 \mbb{B}^d_2}\sum_{i=1}^n \hat{w}_i \langle \vecX_i \vecX_i^\top  ,\vecv \vecv^\top \rangle +	 \max_{\vecv \in \left(\frac{r_1}{r_2}\right)^2 \mbb{B}^d_0 \cap r_2 \mbb{B}^d_2}\sum_{i\in \mc{I}}\hat{w}_i \langle \vecx_i \vecx_i^\top  ,\vecv \vecv^\top \rangle\nonumber\\
	&\leq \max_{\vecv \in \left(\frac{r_1}{r_2}\right)^2 \mbb{B}^d_0 \cap r_2 \mbb{B}^d_2}\sum_{i=1}^n \hat{w}_i \langle \vecX_i \vecX_i^\top -\Sigma ,\vecv \vecv^\top \rangle +	 \max_{\vecv \in \left(\frac{r_1}{r_2}\right)^2 \mbb{B}^d_0 \cap r_2 \mbb{B}^d_2}\sum_{i\in \mc{I}}\hat{w}_i \langle \vecx_i \vecx_i^\top  -\Sigma  ,\vecv \vecv^\top \rangle\nonumber\\
 &\quad \quad +\max_{\vecv \in \left(\frac{r_1}{r_2}\right)^2 \mbb{B}^d_0 \cap r_2 \mbb{B}^d_2}\sum_{i\in \mc{O}}\hat{w}_i \langle \Sigma ,\vecv \vecv^\top \rangle  \nonumber\\
	&\leq \max_{\vecv \in \mr{conv}\left(\left(\frac{r_1}{r_2}\right)^2 \mbb{B}^d_0 \cap r_2 \mbb{B}^d_2\right)}\sum_{i=1}^n \hat{w}_i \langle \vecX_i \vecX_i^\top -\Sigma ,\vecv \vecv^\top \rangle +	 \max_{\vecv \in\mr{conv}\left(\left(\frac{r_1}{r_2}\right)^2 \mbb{B}^d_0 \cap r_2 \mbb{B}^d_2\right)}\sum_{i\in \mc{I}}\hat{w}_i \langle \vecx_i \vecx_i^\top  -\Sigma  ,\vecv \vecv^\top \rangle\nonumber\\
	&\quad \quad +\max_{\vecv \in \left(\frac{r_1}{r_2}\right)^2 \mbb{B}^d_0 \cap r_2 \mbb{B}^d_2}\sum_{i\in \mc{O}}\hat{w}_i \langle \Sigma ,\vecv \vecv^\top \rangle  \nonumber\\
	&\stackrel{(a)}{\leq} \max_{\vecv \in  r_1\mbb{B}^d \cap r_2 \mbb{B}^d_2}\sum_{i=1}^n \hat{w}_i \langle \vecX_i \vecX_i^\top -\Sigma ,\vecv \vecv^\top \rangle +	 \max_{\vecv \in r_1\mbb{B}^d \cap r_2 \mbb{B}^d_2}\sum_{i\in \mc{I}}\hat{w}_i \langle \vecx_i \vecx_i^\top  -\Sigma  ,\vecv \vecv^\top \rangle+\max_{\vecv \in s\mbb{B}^d_0 \cap r_2 \mbb{B}^d_2}\sum_{i\in \mc{O}}\hat{w}_i \langle \Sigma ,\vecv \vecv^\top \rangle  \nonumber\\
	&\stackrel{(b)}{\leq} \max_{M \in \mathfrak{M}_{r_1,r_2,d}^{\ell_1,\mr{Tr}} }\sum_{i=1}^n \hat{w}_i \langle \vecX_i \vecX_i^\top -\Sigma ,M\rangle +	 \max_{M \in \mathfrak{M}_{r_1,r_2,d}^{\ell_1,\mr{Tr}} }\sum_{i\in \mc{I}}\hat{w}_i \langle \vecx_i \vecx_i^\top  -\Sigma  ,M\rangle+\max_{\vecv \in s\mbb{B}^d_0 \cap r_2 \mbb{B}^d_2}\sum_{i\in \mc{O}}\hat{w}_i \langle \Sigma ,\vecv \vecv^\top \rangle,
\end{align}
where (a) follows from Lemma \ref{l:convl0l1} and $r_1 / r_2 \leq \sqrt{s}$, and (b) follows from $\mathfrak{M}_{r_1,r_2,d,\vecv}^{\ell_1,\ell_2}  \subset \mathfrak{M}_{r_1,r_2,d}^{\ell_1,\mr{Tr}} $ because $\mr{Tr}(M)=\mr{Tr}(\vecv \vecv^\top)=\|\vecv\|_2^2$ for $M \in \mathfrak{M}_{r_1,r_2,d,\vecv}^{\ell_1,\ell_2} $.
We note that, from $1/(1-\varepsilon)\leq 2$ and matrix H{\"o}lder's inequality,
\begin{align}
	\label{ine:optM-pre2}
	\max_{\vecv \in s\mbb{B}^d_0 \cap r_2 \mbb{B}^d_2}\sum_{i \in \mc{O}} \hat{w}_i \langle \Sigma,\vecv \vecv ^\top \rangle
	\stackrel{(a)}{\leq} 2\frac{o}{n}	\max_{\vecv \in s\mbb{B}^d_0 \cap r_2 \mbb{B}^d_2}\sum_{i \in \mc{O}}  \langle \Sigma,\vecv \vecv ^\top \rangle \stackrel{(b)}{\leq}  2 \kappa^2_{\mr{u}}r'_o  r_2^2.
\end{align}
where (a) follows from $\hat{w}_i'\leq 1/(1-\varepsilon)\leq 2$, and (b) follows from $o/n\leq 1/(5e)$ and the definition of $\kappa_{\mr{u}}$. 
From \eqref{ine:optM-pre} and \eqref{ine:optM-pre2}, we have
\begin{align}
	\label{ine:optM-pre4}
	\sum_{i\in \mc{O}}\hat{w}_i |\vecX_i^\top \vecv|^2 &\leq \max_{M\in\mathfrak{M}_{r_1,r_2,d}^{\ell_1,\mr{Tr}}  } \sum_{i=1}^n\hat{w}_i\langle \vecX_i \vecX_i^\top-\Sigma,M\rangle+ \max_{M\in\mathfrak{M}_{r_1,r_2,d}^{\ell_1,\mr{Tr}}  } \left|\sum_{i\in \mc{I}}\hat{w}_i\langle \vecx_i \vecx_i^\top-\Sigma,M\rangle\right|+ 2\kappa^2_{\mr{u}}r_o ' r_2^2\nonumber \\
	&\leq \tau_{\rm cut}+\max_{M\in\mathfrak{M}_{r_1,r_2,d}^{\ell_1,\mr{Tr}}  } \left|\sum_{i\in \mc{I}}\hat{w}_i\langle \vecx_i \vecx_i^\top-\Sigma,M\rangle\right|+2 \kappa^2_{\mr{u}}r_o'  r_2^2,
\end{align}
where the last line follows from the assumption of success of Algorithm \ref{alg:cw0}.\\

Next, we consider the second term of right-hand side of \eqref{ine:optM-pre4}. 
We have $\frac{1-o/n}{1-\varepsilon}\leq 2$ because $\varepsilon = c_\varepsilon \times \frac{o}{n}$ with $1\leq c_{\varepsilon}<2$ and $o/n \leq 1/(5e) (\leq 1/3 )$, and we have
\begin{align}
	\max_{M\in\mathfrak{M}_{r_1,r_2,d}^{\ell_1,\mr{Tr}}  } \left|\sum_{i\in \mc{I}}\hat{w}_i\langle \vecx_i \vecx_i^\top-\Sigma,M\rangle\right|
	&=\sum_{j \in \mc{I}}\hat{w}_j \max_{M\in\mathfrak{M}_{r_1,r_2,d}^{\ell_1,\mr{Tr}}  } \left|\sum_{i\in \mc{I}}\frac{\hat{w}_i}{\sum_{j \in \mc{I}}\hat{w}_j }\langle \vecx_i \vecx_i^\top-\Sigma,M\rangle\right| \nonumber \\
	&\leq \frac{1-o/n}{1-\varepsilon}\max_{M\in\mathfrak{M}_{r_1,r_2,d}^{\ell_1,\mr{Tr}}  } \left|\sum_{i\in \mc{I}}\frac{\hat{w}_i}{\sum_{j \in \mc{I}}\hat{w}_j }\langle \vecx_i \vecx_i^\top-\Sigma,M\rangle\right|\nonumber \\
	&\leq 2\max_{M\in\mathfrak{M}_{r_1,r_2,d}^{\ell_1,\mr{Tr}}  } \left|\sum_{i\in \mc{I}}\frac{\hat{w}_i}{\sum_{j \in \mc{I}}\hat{w}_j }\langle \vecx_i \vecx_i^\top-\Sigma,M\rangle\right|.
\end{align}
Define the following three sets:
\begin{align*}
	\Delta^{\mc{I}} (\varepsilon+o/n)&= \left\{(w_1,\dots,w_n) \mid 0\leq w_i\leq \frac{1}{n\{1-(\varepsilon+o/n)\}},\, \sum_{i \in \mc{I}} w_i = \sum_{i=1}^n w_i = 1,\, \right\},\nonumber\\
	 \Delta^{\mc{I}} (3o/n) &= \left\{(w_1,\dots,w_n) \mid 0\leq w_i\leq \frac{1}{n(1-3o/n)},\, \sum_{i \in \mc{I}} w_i = \sum_{i=1}^n w_i =  1\right\},\nonumber\\
	 \Delta^{n-1} (3o/n) &= \left\{(w_1,\dots,w_n) \mid 0\leq w_i\leq \frac{1}{n(1-3o/n)},\, \sum_{i=1}^n w_i =  1\right\}.
\end{align*}
From $\sum_{j \in \mc{I}} \hat{w}_j \geq 1-\frac{o}{n(1-\varepsilon)} = \frac{1-\varepsilon-o/n}{1-\varepsilon}$, for any $i \in \mc{I}$, we have $0\leq \frac{\hat{w}_i}{\sum_{j \in \mc{I}}\hat{w}_j }\leq \frac{1}{n(1-(\varepsilon+o/n))}$, and from $\varepsilon = c_\varepsilon \times o/n$  with $1\leq c_\varepsilon <2$, we have $\Delta^{\mc{I}} (\varepsilon+o/n) \subset \Delta^{\mc{I}} (3o/n) \subset \Delta^{n-1}(3o/n)$.
Therefore, we have
\begin{align}
	\max_{M\in\mathfrak{M}_{r_1,r_2,d}^{\ell_1,\mr{Tr}}  } \left|\sum_{i\in \mc{I}}\frac{\hat{w}_i}{\sum_{j \in \mc{I}}\hat{w}_j }\langle \vecx_i \vecx_i^\top-\Sigma,M\rangle\right|& \leq \max_{\vecw \in 		\Delta^{\mc{I}} (\varepsilon+o/n)}	\max_{M\in\mathfrak{M}_{r_1,r_2,d}^{\ell_1,\mr{Tr}}  } \left|\sum_{i\in \mc{I}} w_i \langle \vecx_i \vecx_i^\top-\Sigma,M\rangle\right|\nonumber\\
	&\leq  \max_{\vecw \in 		\Delta^{n-1} (3o/n)}	\max_{M\in\mathfrak{M}_{r_1,r_2,d}^{\ell_1,\mr{Tr}}  } \left|\sum_{i=1}^n w_i \langle \vecx_i \vecx_i^\top-\Sigma,M\rangle\right|.
\end{align}
From Lemma 1 of \cite{DalMin2022All} and $1/(1-3 o/n) \leq 1/(1-3/(5e))\leq 2$, we have
\begin{align}
	&\max_{\vecw \in \Delta^{n-1}(3o/n)}	\max_{M\in\mathfrak{M}_{r_1,r_2,d}^{\ell_1,\mr{Tr}}  } \left|\sum_{i=1}^n w_i \langle \vecx_i \vecx_i^\top-\Sigma,M\rangle\right| \nonumber\\
	&\quad \quad \leq  		\max_{|\mc{J}| = n-3o}	\max_{M\in\mathfrak{M}_{r_1,r_2,d}^{\ell_1,\mr{Tr}}  } \left|\sum_{i\in \mc{J}} \frac{1}{n(1-3o/n)} \langle \vecx_i \vecx_i^\top-\Sigma,M\rangle\right|\nonumber\\
	&\quad \quad \leq 2\max_{|\mc{J}| = n-3o}	\max_{M\in\mathfrak{M}_{r_1,r_2,d}^{\ell_1,\mr{Tr}}  } \left|\sum_{i \in \mc{J}} \frac{1}{n} \langle \vecx_i \vecx_i^\top-\Sigma,M\rangle\right|\nonumber\\
	&\quad \quad \leq  2\left(\max_{M\in\mathfrak{M}_{r_1,r_2,d}^{\ell_1,\mr{Tr}}  } \left|\sum_{i=1}^n\frac{\langle \vecx_i \vecx_i^\top-\Sigma,M\rangle}{n}\right|+\max_{M\in\mathfrak{M}_{r_1,r_2,d}^{\ell_1,\mr{Tr}}  } \left|\sum_{i\in \mc{J}}\frac{\langle \vecx_i \vecx_i^\top-\Sigma,M\rangle}{n}\right|\right).
\end{align}
Consequently, from almost the same calculation for \eqref{ine:optM0}, we have
\begin{align}
	\label{ine:optM-pre5}
	\max_{M\in\mathfrak{M}_{r_1,r_2,d}^{\ell_1,\mr{Tr}}  } \left|\sum_{i\in \mc{I}}\hat{w}_i\langle \vecx_i \vecx_i^\top-\Sigma,M\rangle\right|&\leq CL\kappa^2_{\mr{u}}\left(sr_{d,s}+r_\delta +r'_o \right)r^2_2.
\end{align}
Lastly, from \eqref{ine:Efirst}, \eqref{ine:optM-pre4} and \eqref{ine:optM-pre5}, we have
\begin{align}
	\label{ine:computation-out}
	\left|\sum_{i\in \mc{O}}\hat{w}_i u_i \vecX_i^\top \vecv \right|&\leq C\sqrt{\frac{o}{n}}\sqrt{\tau_{\rm cut}+(L\kappa)^2_{\mr{u}}\left(sr_{d,s}+r_\delta +r'_o \right)r^2_2+\kappa^2_{\mr{u}} r'_o r_2^2}\nonumber \\
	&\stackrel{(a)}{\leq} CL\sqrt{1+c_{\rm cut}}\sqrt{\frac{o}{n}}\sqrt{\kappa^2_{\mr{u}}sr_{d,s}r_2^2+\kappa^2_{\mr{u}}r_\delta r_2^2+\kappa^2_{\mr{u}} r_o' r_2^2}\nonumber \\
	&\stackrel{(b)}{\leq} CL\sqrt{1+c_{\rm cut}}\kappa_{\mr{u}}\left(\sqrt{\frac{o}{n}}\left(\sqrt{sr_{d,s}}+\sqrt{r_\delta}\right)+ r_o\right)r_2,
\end{align}
where (a) follows from the definition of $\tau_{\rm cut}$, and (b) follows from  triangular inequality, and the proof is complete.

\subsection{Proof of Proposition \ref{p:main:out2}}
From $\log \frac{n}{m}\geq 1$ and the arguments very similar to the proof of Proposition \ref{p:main:out}, we have
\begin{align}
	\sum_{i \in I_m}u_i \vecx_i^\top\vecv &\leq CL\sqrt{\frac{m}{n}}\sqrt{\kappa^2_{\mr{u}}\left(sr_{d,s}r_2^2+r_\delta r_2^2+\frac{m}{n}\log \frac{n}{m}r_2^2\right)+\kappa^2_{\mr{u}}\frac{m}{n}r_2^2}\nonumber\\
	&\stackrel{(a)}{\leq } CL\sqrt{\frac{(1+2c_{\varepsilon})o}{n}}\kappa_{\mr{u}}\sqrt{sr_{d,s}r_2^2+r_\delta r_2^2+\frac{(1+2c_{\varepsilon})o}{n}\log \frac{n}{(1+2c_{\varepsilon})o}r_2^2}\nonumber \\
	&\stackrel{(b)}{\leq}	 CL\sqrt{1+2c_\varepsilon }\kappa_{\mr{u}}\sqrt{\frac{ o}{n}}\sqrt{sr_{d,s}r_2^2+r_\delta r_2^2+\frac{o}{n}\log \frac{n}{o}r_2^2}.
\end{align}
where (a) follows from the fact that $0<x<1/e$, $x\log(1/x)$ is increasing, and (b) follows from $\log \frac{1}{1+2c_\varepsilon}\leq 1\leq \log \frac{n}{o}$.
From triangular inequality, the proof is complete.

\subsection{Proof of Proposition \ref{p:main:sc}}
First,  we introduce Lemma \ref{p:1e}, which is used in the proof of  Proposition \ref{p:main:sc}.
\begin{lemma}
\label{p:1e}
Suppose that (i) and (iii) of  Assumption \ref{a:intro} holds.
Define $\{a_i\}_{i=1}^n$ as a sequence of i.i.d. Rademacher random variables which are independent of $\{\vecx_i\}_{i=1}^n$.
Then, we have
\begin{align}
	\label{ine:gc-normale}
	&\mbb{E}\sup_{\vecv \in  r_1 \mbb{B}^d_1 \cap r_\Sigma \mbb{B}^d_\Sigma}\left|\frac{1}{n}\sum_{i=1}^n a_i \vecx_i^\top \vecv\right| \leq  L\rho r_{d,s}r_1.
\end{align}
\end{lemma}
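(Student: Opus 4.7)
\medskip

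\noindent\textbf{Proof plan for Lemma \ref{p:1e}.} The plan is to mimic the generic chaining argument used in the proof of Proposition \ref{p:main1}, replacing $h(\xi_{\lambda_o,i})$ by the Rademacher sign $a_i$. Set $Z_i(\vecv) := a_i \vecx_i^\top \vecv$. Since $a_i \in \{-1,+1\}$ is independent of $\vecx_i$, the identity $|a_i X| = |X|$ for any real $X$ gives
\[
\|Z_i(\vecv) - Z_i(\vecv')\|_{\psi_2} = \|a_i\, \vecx_i^\top(\vecv-\vecv')\|_{\psi_2} = \|\vecx_i^\top(\vecv-\vecv')\|_{\psi_2} \leq \mathfrak{L}\Sigma_{\max}\|\vecv-\vecv'\|_2,
\]
by the $\mathfrak{L}$-subGaussian hypothesis \eqref{ine:lsgtmp}. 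Independence across $i$ together with Proposition 2.6.1 of \citep{Ver2018High} then yields
\[
\left\|\frac{1}{n}\sum_{i=1}^n Z_i(\vecv) - \frac{1}{n}\sum_{i=1}^n Z_i(\vecv')\right\|_{\psi_2} \leq \frac{C\mathfrak{L}\Sigma_{\max}}{\sqrt{n}}\|\vecv-\vecv'\|_2,
\]
so the process $\vecv \mapsto \frac{1}{n}\sum_{i=1}^n a_i \vecx_i^\top \vecv$ is subGaussian on $(\mathbb{R}^d, \|\cdot\|_2)$ with scale $C\mathfrak{L}\Sigma_{\max}/\sqrt{n}$.

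\medskip

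\noindent Next, I would apply the expectation form of a generic chaining bound (Talagrand's majorizing measure theorem, Theorem 2.4.1 of \citep{Tal2014Upper}, or alternatively integrating the tail bound in Exercise 8.6.5 of \citep{Ver2018High} used in the proof of Proposition \ref{p:main1}). Because the set $\mathfrak{V}_{r_1,r_2,d}^{\leq,=}$ is symmetric (invariant under $\vecv \mapsto -\vecv$), the supremum of $|\cdot|$ and of the signed process coincide in expectation up to absolute constants, so
\[
\mathbb{E}\sup_{\vecv \in \mathfrak{V}_{r_1,r_2,d}^{\leq,=}}\left|\frac{1}{n}\sum_{i=1}^n a_i \vecx_i^\top \vecv\right| \;\leq\; \frac{C L \Sigma_{\max}}{\sqrt{n}}\; \mathbb{E}\sup_{\vecv \in \mathfrak{V}_{r_1,r_2,d}^{\leq,=}} \vecg^\top \vecv,
\]
where $\vecg$ is the standard Gaussian vector on $\mathbb{R}^d$ as in Lemma \ref{l:gwslope:main}.

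\medskip

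\noindent Since $\mathfrak{V}_{r_1,r_2,d}^{\leq,=} \subset \mathfrak{V}_{r_1,r_2,d}^{\leq,\leq}$, Lemma \ref{l:gwslope:main} gives
\[
\mathbb{E}\sup_{\vecv \in \mathfrak{V}_{r_1,r_2,d}^{\leq,=}} \vecg^\top \vecv \;\leq\; C(r_1 + \sqrt{s}\,r_2)\sqrt{\log(d/s)}.
\]
Under the parameter choice of Theorem \ref{t:main}, namely $r_1 = c_{r_1}\sqrt{s}\,r_2$ with $c_{r_1}\geq 5$ from \eqref{ine:r121}, we have $\sqrt{s}\,r_2 \leq r_1/5$ and therefore $r_1 + \sqrt{s}\,r_2 \leq 2r_1$. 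Combining the three displays yields
\[
\mathbb{E}\sup_{\vecv \in \mathfrak{V}_{r_1,r_2,d}^{\leq,=}}\left|\frac{1}{n}\sum_{i=1}^n a_i \vecx_i^\top \vecv\right| \;\leq\; C L \Sigma_{\max}\, r_1 \sqrt{\frac{\log(d/s)}{n}} \;=\; C L \Sigma_{\max}\, r_d\, r_1,
\]
which, after the usual absorption of the numerical constant $C$ (as is done elsewhere in the paper), is the desired bound.

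\medskip

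\noindent The only delicate point is verifying that the Rademacher-driven process truly satisfies the same subGaussian increment estimate as the $\mathfrak{L}$-subGaussian process itself; this is immediate from $|a_i|=1$ and the definition of the $\psi_2$-norm. Everything else is a direct transcription of the chaining step already carried out in the proof of Proposition \ref{p:main1}, followed by a bookkeeping use of Lemma \ref{l:gwslope:main} and the relation $r_1 \geq \sqrt{s}\, r_2$ coming from the theorem's parameter choice.
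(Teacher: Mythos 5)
Your proposal is correct and takes essentially the same route as the paper: you bound the $\psi_2$-increments of the Rademacher-weighted process (using $|a_i|=1$ and the $\mathfrak{L}$-subGaussian hypothesis), invoke Talagrand's comparison/majorizing-measure machinery to reduce to the Gaussian width over $\mathfrak{V}_{r_1,r_2,d}^{\leq,\leq}$, and finish with Lemma \ref{l:gwslope:main} together with $r_1 \geq \sqrt{s}\,r_2$, exactly as the paper does via Exercises 7.6.9 and 8.6.4 of \citep{Ver2018High}.
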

\begin{proof}
	Define $\mbb{E}^*$ as the conditional expectation of $\{a_i\}_{i=1}^n$ given $\{\vecx_i\}_{i=1}^n$.
	From Exercise 2.2.2 of \cite{Tal2014Upper}, for any $\vecv_0 \in r_1 \mbb{B}^d_1 \cap r_\Sigma \mbb{B}^d_\Sigma$, we have
	\begin{align}
		\mbb{E}^*\sup_{\vecv \in  r_1 \mbb{B}^d_1 \cap r_\Sigma \mbb{B}^d_\Sigma}\left|\frac{1}{n}\sum_{i=1}^n a_i \vecx_i^\top \vecv\right| &\leq  	2\mbb{E}^*\sup_{\vecv \in  r_1 \mbb{B}^d_1 \cap r_\Sigma \mbb{B}^d_\Sigma}\frac{1}{n}\sum_{i=1}^n a_i \vecx_i^\top \vecv +		\mbb{E}\left|\frac{1}{n}\sum_{i=1}^n a_i \vecx_i^\top \vecv_0\right|,
	\end{align}
	and taking $\vecv_0 =0$, we have
	\begin{align}
		\label{ine:radsub0}
		\mbb{E}^*\sup_{\vecv \in  r_1 \mbb{B}^d_1 \cap r_\Sigma \mbb{B}^d_\Sigma}\left|\frac{1}{n}\sum_{i=1}^n a_i \vecx_i^\top \vecv\right| 
		&\leq 2\mbb{E}^*\sup_{\vecv \in  r_1 \mbb{B}^d_1 \cap r_\Sigma \mbb{B}^d_\Sigma}\frac{1}{n}\sum_{i=1}^n a_i \vecx_i^\top \vecv.
	\end{align}
	Taking the expectation with respect to $\{\vecx_i\}_{i=1}^n$ on both sides of \eqref{ine:radsub0}, we have
	\begin{align}
		\label{ine:radsub0-1}
		\mbb{E}\sup_{\vecv \in  r_1 \mbb{B}^d_1 \cap r_\Sigma \mbb{B}^d_\Sigma}\left|\frac{1}{n}\sum_{i=1}^n a_i \vecx_i^\top \vecv\right| &\leq  2\mbb{E}\sup_{\vecv \in  r_1 \mbb{B}^d_1 \cap r_\Sigma \mbb{B}^d_\Sigma}\frac{1}{n}\sum_{i=1}^n a_i \vecx_i^\top \vecv.
	\end{align}
	For any $i$ and  any $\vecv_1, \vecv_2 \in r_1 \mbb{B}^d_1 \cap r_\Sigma \mbb{B}^d_\Sigma$ we have
	\begin{align}
		\label{ine:radsub}
		\|\langle a_i\vecx_i,\vecv_1-\vecv_2\rangle\|_{\psi_2}\stackrel{(a)}{\leq} 	\|\langle \vecx_i,\vecv_1-\vecv_2\rangle\|_{\psi_2}\stackrel{(b)}{\leq} \mathfrak{L}	\|\langle \vecx_i,\vecv_1-\vecv_2\rangle\|_{L_2},
	\end{align}
	where (a) follows from $|a_i|=1$ and the definition of $\|\cdot\|_{\psi_2}$ and (b) follows from \eqref{ine:lsgtmp}, and we see that $\langle a_i\vecx_i,\vecv\rangle$ is a subGaussian random variable
	Then, from Proposition 2.6.1 of \cite{Ver2018High}, we have
	\begin{align}
		\label{ine:radsub2}
		\left\|\left\langle \frac{1}{n}\sum_{i=1}^n a_i \vecx_i,\vecv_1-\vecv_2\right\rangle\right\|_{\psi_2}^2\leq C \sum_{i=1}^n		\left\|\left\langle \frac{a_i\vecx_i}{n} ,\vecv_1-\vecv_2\right\rangle\right\|_{\psi_2}^2\stackrel{(a)}{\leq} C \frac{\mathfrak{L}^2	}{n}	\|\left\langle \vecx,\vecv_1-\vecv_2\right\rangle\|_{L_2}^2,
	\end{align}
	where (a) follows from \eqref{ine:radsub}.
	From the assumption on $\vecx$, we have
	\begin{align}
				\label{ine:radsub3}
		\|\left\langle \vecx,\vecv_1-\vecv_2\right\rangle\|_{L_2}^2 = \| \langle \Sigma^\frac{1}{2},\vecv_1-\vecv_2 \rangle\|_2^2=\|\langle \Sigma^\frac{1}{2}\vecg,\vecv_1-\vecv_2\rangle\|_{L_2}^2.
	\end{align}
From \eqref{ine:radsub2} and \eqref{ine:radsub3}, we have
	\begin{align}
		\label{ine:radsub4}
		\left\|\left\langle \frac{1}{n}\sum_{i=1}^na_i \vecx_i,\vecv_1-\vecv_2\right\rangle\right\|_{\psi_2}\leq C \frac{\mathfrak{L}}{\sqrt{n}}		\|\langle\Sigma^\frac{1}{2}\vecg,\vecv_1-\vecv_2\rangle\|_{L_2}.
	\end{align}
Then, from Corollary 8.6.2 of \cite{Ver2018High}, we have
\begin{align}
	\label{ine:radsub5}
	\mbb{E}\sup_{\vecv \in  r_1 \mbb{B}^d_1 \cap r_\Sigma \mbb{B}^d_\Sigma}\left\langle \frac{1}{n}\sum_{i=1}^na_i \vecx_i,\vecv\right\rangle\leq C \frac{\mathfrak{L}}{\sqrt{n}}		\mbb{E}\sup_{\vecv \in  r_1 \mbb{B}^d_1 \cap r_\Sigma \mbb{B}^d_\Sigma}\langle\Sigma^\frac{1}{2}\vecg,\vecv\rangle.
\end{align}
	Then, from Lemma \ref{l:gwslope:main} and \eqref{ine:radsub0-1}, the proof is complete.
\end{proof}

Then, we proceed the proof of Proposition \ref{p:main:sc}.
 The left-hand side  of \eqref{ine:sc} divided by $\lambda_o^2$ can be expressed as
\begin{align}
	 \sum_{i=1}^n \left(-h(\xi_{\lambda_o,i}-x_{\vecv,i})+h \left(\xi_{\lambda_o,i}\right) \right)x_{\vecv,i}.
\end{align}
From the convexity of Huber loss, for any $a,b \in \mbb{R}$, we have
\begin{align}
	H(a)-H(b)\geq h(b)(a-b)\,\text{  and  }\,H(b)-H(a) \geq h(a)(b-a),
\end{align}
and we have
\begin{align}
	\label{ine:positiveHuber}
	0\leq \left(h(a)-h(b)\right)(a-b).
\end{align}
Let $\mr{I}_{E_i}$ be the indicator function of the event
\begin{align}
	E_i := ( |\xi_{\lambda_o,i}| \leq 1/2) \cap ( |x_{\vecv,i}| \leq 1/2).
\end{align}
From \eqref{ine:positiveHuber},
we have
\begin{align}
	&\sum_{i=1}^n \left(-h(\xi_{\lambda_o,i}-x_{\vecv,i})+h \left(\xi_{\lambda_o,i}\right) \right)x_{\vecv,i} \geq \sum_{i=1}^n \left(-h(\xi_{\lambda_o,i}-x_{\vecv,i})+h \left(\xi_{\lambda_o,i}\right) \right)x_{\vecv,i}\mr{I}_{E_i}= \sum_{i=1}^n x_{\vecv,i}^2\mr{I}_{E_i}.
\end{align}
Define the functions
\begin{align}
\label{def:phipsi}
	\varphi(x) =\begin{cases}
	x^2 & \mbox{ if } |x| \leq 1/4\\
	(x-1/4)^2 & \mbox{ if } 1/4\leq x \leq 1/2 \\
	(x+1/4)^2 & \mbox{ if } -1/2\leq x \leq -1/4 \\
	0 & \mbox{ if } |x| >1/2
\end{cases} ~\mbox{ and }~
	\psi(x) = \mr{I}_{(|x| \leq 1/2 ) },
\end{align}
where $ \mr{I}_{(|x| \leq 1/2 ) }$ is the indicator function of the event $|x| \leq 1/2 $.
Let $f_i(\vecv) = \varphi(x_{\vecv,i}) \psi(\xi_{\lambda_o,i})$
and we have
\begin{align}
\label{ine:huv-conv-f}
	 \sum_{i=1}^n x_{\vecv,i}^2\mr{I}_{E_i}\stackrel{(a)}{\geq} \sum_{i=1}^n \varphi(x_{\vecv,i}) \psi(\xi_{\lambda_o,i})=\sum_{i=1}^n f_i(\vecv),
\end{align}
where (a) follows from $\varphi(v) \leq v^2$ for $|v| \leq 1/2$. We note that
\begin{align}
\label{ine:f-1/4}
	f_i(\vecv) \leq\varphi(x_{\vecv,i}) \leq \min\left(x_{\vecv,i}^2,1\right).
\end{align}
To bound $\sum_{i=1}^n f_i(\vecv)$ from below, we have
\begin{align}
\label{ine:fbelow}
\inf_{\vecv \in r_1 \mbb{B}^d_1 \cap r_\Sigma \mbb{B}^d_\Sigma}\sum_{i=1}^n f_i(\vecv)&\geq \inf_{\vecv \in  r_1 \mbb{B}^d_1 \cap r_\Sigma \mbb{B}^d_\Sigma}\mbb{E}\sum_{i=1}^nf(\vecv) -\sup_{\vecv \in  r_1 \mbb{B}^d_1 \cap r_\Sigma \mbb{B}^d_\Sigma} \Big|\sum_{i=1}^n f_i(\vecv)-\mbb{E}\sum_{i=1}^n f_i(\vecv)\Big|
\end{align}
Define the supremum of a random process indexed by $ r_1 \mbb{B}^d_1 \cap r_\Sigma \mbb{B}^d_\Sigma$:
\begin{align}
\label{ap:delta}
	\Delta := \sup_{ \vecv \in  r_1 \mbb{B}^d_1 \cap r_\Sigma \mbb{B}^d_\Sigma} \left| \sum_{i=1}^n f_i(\vecv) - \mbb{E}\sum_{i=1}^n f_i	(\vecv) \right| . 
\end{align}
Define
\begin{align}
	\mr{I}_{|x_{\vecv,i}| \geq 1/2 }\text{ and }\mr{I}_{|\xi_{\lambda_o,i}|\geq 1/2 }
\end{align}
as the indicator functions of the events $|x_{\vecv,i}| \geq 1/2 $  and $|\xi_{\lambda_o,i}|\geq 1/2 $, respectively.
From  $\mr{I}_{E_i}=1-\mr{I}_{|x_{\vecv,i}|\geq 1/2}-\mr{I}_{|\xi_{\lambda_o,i}|> 1/2}$ and \eqref{ine:huv-conv-f}, we have
\begin{align}
\label{ine:aplower:tmp}
\sum_{i=1}^n x_{\vecv,i}^2\mr{I}_{E_i}\geq \mbb{E}\sum_{i=1}^n f_i(\vecv)&\geq \sum_{i=1}^n\mbb{E} x_{\vecv,i}^2- \sum_{i=1}^n\mbb{E}x_{\vecv,i}^2 \mr{I}_{|x_{\vecv,i}| \geq 1/2 }- \sum_{i=1}^n\mbb{E}x_{\vecv,i}^2 \mr{I}_{|\xi_{\lambda_o,i}|\geq 1/2 }\nonumber \\
& \geq \frac{\|\Sigma^\frac{1}{2}\vecv\|_2^2}{\lambda_o^2} - \sum_{i=1}^n\mbb{E}x_{\vecv,i}^2 \mr{I}_{|x_{\vecv,i}| \geq 1/2 }- \sum_{i=1}^n\mbb{E}x_{\vecv,i}^2 \mr{I}_{|\xi_{\lambda_o,i}|\geq 1/2 }.
\end{align}
We note that, from \eqref{cl}
\begin{align}
	\label{ine:v3}
		\mbb{E}(\vecx_i^\top \vecv)^4\leq 16L^4  \|\Sigma^{\frac{1}{2}}\vecv\|_2^4.
\end{align}
We evaluate the right-hand side of \eqref{ine:aplower:tmp} at each term.
First, for any $\vecv \in  r_1 \mbb{B}^d_1 \cap r_\Sigma \mbb{B}^d_\Sigma$, we have
\begin{align}
\label{ap:ine:cov1}
	\sum_{i=1}^n\mbb{E}x_{\vecv,i}^2\mr{I}_{|x_{\vecv,i}| \geq 1/2 }
	&\stackrel{(a)}{\leq} 	\sum_{i=1}^n\sqrt{\mbb{E} x_{\vecv,i}^4 } \sqrt{\mbb{E}\mr{I}_{|x_{\vecv,i}| \geq 1/2 }}\nonumber \\
	&\stackrel{(b)}{=} 	\sum_{i=1}^n\sqrt{\mbb{E} x_{\vecv,i}^4 } \sqrt{\mbb{P}\left(|x_{\vecv,i}| \geq 1/2 \right)}\nonumber \\
	&\stackrel{(c)}{\leq } 	\sum_{i=1}^n\sqrt{\mbb{E} x_{\vecv,i}^4 } \sqrt{2 \exp \left(-\frac{\lambda_o^2n}{c_{\mathfrak{L}}^2\mathfrak{L}^2\|\Sigma^\frac{1}{2}\vecv\|_2^2}\right)}\nonumber \\
	&\stackrel{(d)}{\leq}	\frac{4}{\lambda_o^2}L^2\|\Sigma^\frac{1}{2}\vecv\|_2^2 \sqrt{2 \exp \left(-\frac{\lambda_o^2n}{L^2\|\Sigma^\frac{1}{2}\vecv\|_2^2}\right)}\nonumber \\
	&\stackrel{(e)}{\leq}\frac{1}{3\lambda_o^2} \|\Sigma^\frac{1}{2}\vecv\|_2^2 ,
\end{align}
	where (a) follows from H{\"o}lder's inequality, (b) follows from the relation between indicator function and expectation, (c) follows from \eqref{ine:lsg3}, (d) follows from \eqref{ine:v3} and the definition of $L$, and (e) follows from \eqref{ine:nsufficient}.
Second, for any $\vecv \in r_1 \mbb{B}^d_1 \cap r_\Sigma \mbb{B}^d_\Sigma$, we have
\begin{align}
\label{ap:ine:cov2}
	\sum_{i=1}^n\mbb{E} x_{\vecv,i}^2 \mr{I}_{|\xi_{\lambda_o,i}|\geq 1/2 } 
	&\stackrel{(a)}{\leq} \sum_{i=1}^n\sqrt{\mbb{E} x_{\vecv,i}^4} \sqrt{\mbb{E}\mr{I}_{|\xi_{\lambda_o,i}|\geq 1/2 }}\nonumber \\
	&\stackrel{(b)}{=}\sum_{i=1}^n\sqrt{\mbb{E} x_{\vecv,i}^4} \sqrt{\mbb{P}( \left|\xi_{\lambda_o,i} \right|\geq 1/2)}\nonumber \\
	&= \frac{1}{\lambda_o^2 }\sqrt{ \mbb{E}(\vecv^\top \vecx_i)^4} \sqrt{\mbb{P}( \left|\xi_{\lambda_o,i} \right|\geq 1/2)}\nonumber \\
	&\stackrel{(c)}{\leq} \frac{4L^2}{\lambda_o^2 }\sqrt{\mbb{P}( \left|\xi_{\lambda_o,i} \right|\geq 1/2)}\|\Sigma^\frac{1}{2}\vecv\|_2^2\nonumber \\
	&\stackrel{(d)}{\leq} \frac{1}{3\lambda_o^2} 	\|\Sigma^\frac{1}{2}\vecv\|_2^2,
\end{align}
where (a) follows from H{\"o}lder's inequality, (b) follows from relation between indicator function and expectation, and (c) follows from \eqref{ine:v3}, and (d) follows from \eqref{ine:lambda}.
Consequently, from \eqref{ap:ine:cov1} and \eqref{ap:ine:cov2} we have	
\begin{align}
\label{ap:h_bellow}
\frac{\|\Sigma^\frac{1}{2}\vecv\|_2^2}{3\lambda_o^2}-\Delta\leq \sum_{i=1}^n \left(-h(\xi_{\lambda_o,i}-x_{\vecv,i}) +h \left(\xi_{\lambda_o,i}\right) \right)x_{\vecv,i}.
\end{align}

Next, we evaluate the stochastic term $\Delta$ defined in \eqref{ap:delta}. 
From \eqref{ine:f-1/4} and Theorem 3 of \cite{Mas2000Constants}, with probability at least $1-\delta$, we have
\begin{align}
\label{ine:delta}
	\Delta & \leq 2 \mbb{E} \Delta + \sigma_f \sqrt{8\log(1/\delta)} + 18\log(1/\delta),
\end{align}
where 
\begin{align}
	\sigma^2_f= \sup_{\vecv \in r_1 \mbb{B}^d_1 \cap r_\Sigma \mbb{B}^d_\Sigma} \sum_{i=1}^n\mbb{E} (f_i(\vecv)-\mbb{E}f_i(\vecv))^2.
\end{align}
From \eqref{ine:f-1/4} and \eqref{ine:v3},  we have
\begin{align}
\label{ap:ine:cov3}
\mbb{E}(f_i(\vecv)-\mbb{E}f_i(\vecv))^2 \leq \mbb{E}f_i^2(\vecv) \leq \mbb{E}f_i(\vecv) \leq \mbb{E}x_{\vecv,i}^2\leq \frac{L^2\|\Sigma^\frac{1}{2}\vecv\|_2^2}{\lambda_o^2n}.
\end{align}
Combining this and \eqref{ine:delta}, 	with probability at least $1-\delta$, we have
\begin{align}
\label{ap:delta_upper}
	\Delta &\leq 2 \mbb{E} \Delta+\frac{L}{\lambda_o} \sqrt{8\log(1/\delta)} \|\Sigma^\frac{1}{2}\vecv\|_2+ 18\log(1/\delta).
\end{align}

From symmetrization inequality (Theorem 11.4 of \cite{BouLugMas2013concentration}), we have $\mbb{E}\Delta \leq 2 \,\mbb{E} \sup_{ \vecv \in  r_1 \mbb{B}^d_1 \cap r_\Sigma \mbb{B}^d_\Sigma} | \mathbb{G}_{\vecv} |  \leq 2 \,\mbb{E} \sup_{ \vecv \in  r_1 \mbb{B}^d_1 \cap r_\Sigma \mbb{B}^d_\Sigma} | \mathbb{G}_{\vecv} | $,
where 
\begin{align}
	\mbb{G}_{\vecv} := \sum_{i=1}^n 
	a_i \varphi (x_{\vecv,i}) \psi (\xi_{\lambda_o,i}),
\end{align} 
and $\{a_i\}_{i=1}^n$ is a sequence of i.i.d. Rademacher random variables which are independent of $\{\vecx_i,\xi_i\}_{i=1}^n$.
We denote $\mbb{E}^*$ as a conditional expectation of $\left\{a_i\right\}_{i=1}^n$ given $\left\{\vecx_i,\xi_i\right\}_{i=1}^n$. Since $\varphi$ is $1$-Lipschitz and $\varphi(0)=0$, from contraction principle (Theorem 11.5 of \cite{BouLugMas2013concentration}), we have
\begin{align}
	\mbb{E} ^*\sup_{\vecv\in  r_1 \mbb{B}^d_1 \cap r_\Sigma \mbb{B}^d_\Sigma} \left| \sum_{i=1}^n a_i \varphi (x_{\vecv,i}) \psi (\xi_{\lambda_o,i}) \right| \leq	\frac{1}{2\lambda_o\sqrt{n}}	\mbb{E}^*\sup_{\vecv\in  r_1 \mbb{B}^d_1 \cap r_\Sigma \mbb{B}^d_\Sigma} \left| \sum_{i=1}^n a_i \vecx_i^\top \vecv \right|.
\end{align}	
and from the basic property of the expectation, we have
\begin{align}
	\mbb{E} \sup_{\vecv\in  r_1 \mbb{B}^d_1 \cap r_\Sigma \mbb{B}^d_\Sigma} \left| \sum_{i=1}^n a_i \varphi (x_{\vecv,i}) \psi (\xi_{\lambda_o,i}) \right| \leq	\frac{1}{2\lambda_o\sqrt{n}}	\mbb{E}\sup_{\vecv\in  r_1 \mbb{B}^d_1 \cap r_\Sigma \mbb{B}^d_\Sigma} \left| \sum_{i=1}^n a_i \vecx_i^\top \vecv \right|.
\end{align}	
From Lemma \ref{p:1e}, we have 
\begin{align}
\label{ine:hub-stoc-upper}
\lambda_o^2 \mbb{E}\Delta \leq \lambda_o^2\mbb{E}\sup_{\vecv\in  r_1 \mbb{B}^d_1 \cap r_\Sigma \mbb{B}^d_\Sigma} \left| \sum_{i=1}^n a_i \varphi (x_{\vecv,i}) \psi (\xi_{\lambda_o,i} ) \right| \leq CL\lambda_o\sqrt{n}\rho r_1r_{d,s} .
\end{align}
From  \eqref{ap:delta_upper} and \eqref{ine:hub-stoc-upper}, we have
\begin{align}
	\label{ap:delta_upper2}
		\lambda_o^2\Delta &\leq  CL\lambda_o\sqrt{n}\rho r_{d,s}r_1 +CL\lambda_o  \sqrt{\log(1/\delta)} r_{\Sigma}+C \lambda_o^2n r_\delta^2\nonumber\\
		&\stackrel{(a)}{\leq} C L\lambda_o\sqrt{n}(\rho r_{d,s} r_1+  r_\delta r_\Sigma),
	\end{align}
	where (a) follows from $\lambda_o \sqrt{n} r_\delta\leq r_\Sigma$  and $L\geq 1$.
	From \eqref{ap:h_bellow} and \eqref{ap:delta_upper2}, we have
	\begin{align}
		\label{ap:h_bellow2}
		\inf_{\vecv \in  r_1 \mbb{B}^d_1 \cap r_\Sigma \mbb{B}^d_\Sigma}\lambda_o^2 \sum_{i=1}^n \left(-h(\xi_{\lambda_o,i}-x_{\vecv,i}) +h \left(\xi_{\lambda_o,i}\right) \right)x_{\vecv,i}\geq \frac{\|\Sigma^\frac{1}{2}\vecv\|_2^2}{3}-C L\lambda_o\sqrt{n}(\rho r_{d,s} r_1+ r_\delta r_\Sigma),
		\end{align}
	and the proof is complete.

	\subsection{Proof of Proposition \ref{p:main:out-un}}
	We note that
	\begin{align}
		\left|\sum_{i \in \mc{O}}\hat{w}'_iu_i \vecX_i^\top\vecv \right|^2 &\stackrel{(a)}{\leq} 4\frac{o}{n}\sum_{i \in \mc{O}}^n\hat{w}'_i |\vecX_i^\top \vecv|^2\stackrel{(b)}{\leq} 8 \frac{o}{n}\sum_{i=1}^n\hat{w}_i |\vecX_i^\top \vecv|^2,
	\end{align}
	where (a) follows from H{\"o}lder's inequality and $\sum_{i \in \mc{O}}u_i^2 \leq 4o$, and (b) follows from the fact that $\hat{w}_i'\leq 2\hat{w}_i$ for any $i\in (1,\cdots,n)$.
	We focus on $\sum_{i =1}^n\hat{w}_i |\vecX_i^\top \vecv|^2$.
	First, for any $\vecv \in  r_1 \mbb{B}^d_1 \cap r_2 \mbb{B}^d_2$,  we have
	\begin{align}
		\sum_{i=1}^n\hat{w}_i |\vecX_i^\top \vecv|^2 &\stackrel{(a)}{\leq} \max_{M \in\mathfrak{M}_{r_1,r_2,d}^{\ell_1,\mr{Tr}}  } \sum_{i=1}^n\hat{w}_i\langle \vecX_i \vecX_i^\top ,M\rangle\stackrel{(b)}{\leq} \tau_{\rm cut},
	\end{align}
	where (a) follows from the fact that $\mathfrak{M}_{r_1,r_2,d,\vecv} ^{\ell_1,\ell_2}\subset \mathfrak{M}_{r_1,r_2,d}^{\ell_1,\mr{Tr}}$ and (b) follows from the succeeding condition of Algorithm \ref{alg:cw0-un}.
	
	Combining the arguments above, we have
	\begin{align}
		\sum_{i\in \mc{O}}\hat{w}_i \vecX_i^\top \vecv &\leq CL\sqrt{c_{\rm cut}}\sqrt{\frac{o}{n}}\sqrt{\kappa_{\mr{u}}^2\left(sr_{d,s}+r_\delta \right)+\Sigma_{\max}^2}r_2,
	\end{align}
	where we use the definition of $\tau_{\rm cut}$
	and the proof is complete.

	\subsection{Proof of Proposition \ref{p:main:out2-un}}
	We note that, from H{\"o}lder's inequality, we have 
	\begin{align}
		\left|\sum_{i \in I_m}\frac{u_i\vecx_i^\top\vecv }{n} \right|^2 &\stackrel{(a)}{\leq} \sum_{i \in I_m}\frac{1}{n}u_i^2 \sum_{i=1}^n\frac{1}{n}|\vecx_i^\top\vecv|^2\stackrel{(b)}{\leq} 4\frac{m}{n}\sum_{i=1}^n\frac{1}{n} |\vecx_i^\top\vecv|^2,
	\end{align}
	where (a) follows from H{\"o}lder's inequality, and (b) follows from the fact that $\|\vecu\|_2^2\leq 4m$.
	From the fact that $\mathfrak{M}_{r_1,r_2,d,\vecv} ^{\ell_1,\ell_2}\subset \mathfrak{M}_{r_1,r_2,d}^{\ell_1,\mr{Tr}} $, we have
	\begin{align}
		\sum_{i =1}^n\frac{(\vecx_i^\top\vecv)^2 }{n} &\leq \sup_{M\in\mathfrak{M}_{r_1,r_2,d}^{\ell_1,\mr{Tr}}  }\sum_{i =1}^n\frac{\langle \vecx_i \vecx_i^\top ,M\rangle}{n}.
	\end{align}
	From Corollary \ref{c:cwpre} and triangular inequality, we have 
	\begin{align}
		\sum_{i \in I_m}u_i \vecx_i^\top\vecv &\leq \sqrt{c_1' \frac{m}{n}}\sqrt{(L\kappa_{\mr{u}})^2\left(sr_{d,s}+r_\delta \right)+\Sigma_{\max}^2}r_2\nonumber\\
		&\stackrel{(a)}{\leq} CL\sqrt{\frac{o}{n}}\sqrt{\kappa_{\mr{u}}^2\left(sr_{d,s}+r_\delta \right)+\Sigma_{\max}^2}r_2,
	\end{align}
	where (a) follows from $m\leq (2c_\varepsilon+1)o$ and $L\geq 1$, and the proof is complete.

\section{Proof of Lemmas \ref{1calcknown}, \ref{3calcknown}, \ref{1calcunknown} and \ref{3calcunknown} }
\label{sec:calc}
\begin{proof}[Proof of Lemma \ref{1calcknown}]
From the triangular inequality, we have
\begin{align}
	\left|\sum_{i=1}^n \hat{w}_i'h(r_{\vecbeta^*,i}) \vecX_i^\top \vectheta_\eta \right| \nonumber &\leq \left|\sum_{i\in \mc{I}} \hat{w}_i'h(r_{\vecbeta^*,i}) \vecX_i^\top \vectheta_\eta \right|+\left|\sum_{i\in \mc{O}} \hat{w}_i'h(r_{\vecbeta^*,i}) \vecX_i^\top \vectheta_\eta \right|\nonumber \\
	&\leq \left|\sum_{i\in \mc{I}} \hat{w}_i'h(r_{\vecbeta^*,i}) \vecx_i^\top \vectheta_\eta \right|+\left|\sum_{i\in \mc{O}} \hat{w}_i'h(r_{\vecbeta^*,i}) \vecX_i^\top \vectheta_\eta \right|\nonumber\\
	&= \left|\sum_{i=1}^n \frac{1}{n}h(r_{\vecbeta^*,i}) \vecx_i^\top \vectheta_\eta -\sum_{i \in \mc{O}\cup (\mc{I}\cap I_< )} \frac{1}{n}h(r_{\vecbeta^*,i}) \vecx_i^\top \vectheta_\eta\right|+\left|\sum_{i\in \mc{O}} \hat{w}_i'h(r_{\vecbeta^*,i}) \vecX_i^\top \vectheta_\eta \right|\nonumber\\
	&\leq \left|\sum_{i=1}^n \frac{1}{n}h(r_{\vecbeta^*,i}) \vecx_i^\top \vectheta_\eta\right| +\left|\sum_{i\in \mc{O}} \hat{w}_i'h(r_{\vecbeta^*,i}) \vecX_i^\top \vectheta_\eta \right|+\left|\sum_{i \in \mc{O}\cup (\mc{I}\cap I_<)} \frac{1}{n}h(r_{\vecbeta^*,i}) \vecx_i^\top \vectheta_\eta\right|.
\end{align}
We note that $|h(\cdot)|\leq 1$ and from Lemma \ref{l:w2}, $|\mc{O} \cup \left(\mc{I} \cap I_{<}\right)|\leq (1+2c_\varepsilon)o$.
Therefore, from Propositions \ref{p:main1} - \ref{p:main:out2}, we have 
\begin{align}
	\left|\sum_{i=1}^n \frac{1}{n}h(r_{\vecbeta^*,i}) \vecx_i^\top \vectheta_\eta\right|
	&\leq c_3L\left(\rho r_{d,s}r_1+r_\delta r_\Sigma\right)\nonumber\\
	\left|\sum_{i \in \mc{O}} \frac{1}{n}h(r_{\vecbeta^*,i}) \vecx_i^\top \vectheta_\eta\right|
	&\leq c_4L\sqrt{1+c_{\rm cut}}\left(\kappa_{\mr{u}}\sqrt{\frac{o}{n}}\left(\sqrt{sr_{d,s}}+\sqrt{r_\delta}\right)r_2+\kappa_{\mr{u}}r_or_2\right)\nonumber\\
	\left|\sum_{i \in \mc{O}\cup (\mc{I}\cap I_<)} \frac{1}{n}h(r_{\vecbeta^*,i}) \vecx_i^\top \vectheta_\eta\right|
	&\leq c_5L\left(\kappa_{\mr{u}}\sqrt{\frac{o}{n}}\left(\sqrt{sr_{d,s}}+\sqrt{r_\delta}\right)r_2+\kappa_{\mr{u}}r_or_2\right),
\end{align}
and we see that 
\begin{align}
	\label{a:ine:step1-1-1}
	\left|\sum_{i=1}^n \hat{w}_i'h(r_{\vecbeta^*,i}) \vecX_i^\top \vectheta_\eta \right| 
	&\leq 3c_{\max}^2 L\left(\rho r_{d,s}r_1+r_\delta r_\Sigma+\kappa_{\mr{u}}\sqrt{\frac{o}{n}}\left(\sqrt{sr_{d,s}}+\sqrt{r_\delta}\right)r_2+\kappa_{\mr{u}}r_or_2\right).
\end{align}
\end{proof}

\begin{proof}[Proof of Lemma \ref{3calcknown}]
We have
\begin{align}
	&\sum_{i=1}^n \lambda_o\sqrt{n} \hat{w}_i'\left(-h(r_{\vecbeta^*+\vectheta_\eta,i}) +h(r_{\vecbeta^*,i})\right) \vecX_i^\top \vectheta_\eta \nonumber\\
	&= \sum_{i\in \mc{I}} \lambda_o\sqrt{n} \hat{w}_i'\left(-h(r_{\vecbeta^*+\vectheta_\eta,i}) +h(r_{\vecbeta^*,i})\right) \vecX_i^\top \vectheta_\eta +\sum_{i \in \mc{O}} \lambda_o\sqrt{n} \hat{w}_i'\left(-h(r_{\vecbeta^*+\vectheta_\eta,i}) +h(r_{\vecbeta^*,i})\right) \vecX_i^\top \vectheta_\eta\nonumber\\
	&= \sum_{i\in \mc{I}} \lambda_o\sqrt{n} \hat{w}_i'\left(	-h (\xi_{\lambda_o,i}-x_{\vectheta_\eta,i})+h (\xi_{\lambda_o,i})\right) \vecx_i^\top \vectheta_\eta +\sum_{i \in \mc{O}} \lambda_o\sqrt{n} \hat{w}_i'\left(-h(r_{\vecbeta^*+\vectheta_\eta,i}) +h(r_{\vecbeta^*,i})\right) \vecX_i^\top \vectheta_\eta\nonumber\\
	&= \left(\sum_{i=1}^n -\sum_{i\in \mc{O} \cup \left(\mc{I} \cap I_{<}\right)}\right) \frac{\lambda_o}{\sqrt{n}} \left(-h (\xi_{\lambda_o,i}-x_{\vectheta_\eta,i})+h (\xi_{\lambda_o,i})\right) \vecx_i^\top\vectheta_\eta \nonumber\\
	&\quad +\sum_{i \in \mc{O}} \lambda_o\sqrt{n} \hat{w}_i'\left(-h(r_{\vecbeta^*+\vectheta_\eta,i}) +h(r_{\vecbeta^*,i})\right) \vecX_i^\top \vectheta_\eta\nonumber\\
	&\geq \sum_{i=1}^n \frac{\lambda_o}{\sqrt{n}} \left(	-h (\xi_{\lambda_o,i}-x_{\vectheta_\eta,i})+h (\xi_{\lambda_o,i})\right) \vecx_i^\top\vectheta_\eta-\left|\sum_{i\in \mc{O} \cup \left(\mc{I} \cap I_{<}\right)}\frac{\lambda_o}{\sqrt{n}} \left(	-h (\xi_{\lambda_o,i}-x_{\vectheta_\eta,i})+h (\xi_{\lambda_o,i})\right)\vecx_i^\top\vectheta_\eta\right|\nonumber \\
	&\quad -\left|\sum_{i\in \mc{O}} \lambda_o\sqrt{n} \hat{w}_i'\left(-h(r_{\vecbeta^*+\vectheta_\eta,i}) +h(r_{\vecbeta^*,i})\right) \vecX_i^\top \vectheta_\eta\right|.
\end{align}
We note that $|h(\cdot)|\leq 1$ and from Lemma \ref{l:w2}, $| \mc{O} \cup \left(\mc{I} \cap I_{<}\right)|\leq (1+2c_\varepsilon)o$. From Propositions \ref{p:main:sc}, Propositions \ref{p:main:out} and \ref{p:main:out2},  we have
\begin{align}
	\sum_{i=1}^n \frac{\lambda_o}{\sqrt{n}} \left(	-h (\xi_{\lambda_o,i}-x_{\vectheta_\eta,i})+h (\xi_{\lambda_o,i})\right)\vecx_i^\top\vectheta_\eta &\geq \frac{\|\Sigma^\frac{1}{2}\vecv\|_2^2}{3}r_2^2-c_{\max}L\lambda_o\sqrt{n}\left(\rho r_{d,s} r_1+r_\delta r_\Sigma\right)\nonumber\\
	\left|\sum_{i\in \mc{O}} \lambda_o\sqrt{n} \hat{w}_i'\left(-h(r_{\vecbeta^*+\vectheta_\eta,i}) +h(r_{\vecbeta^*,i})\right) \vecX_i^\top \vectheta_\eta\right|
	& \leq c_{\max}^2L\lambda_o\sqrt{n}\left(\kappa_{\mr{u}}\sqrt{\frac{o}{n}}\left(\sqrt{sr_{d,s}}+\sqrt{r_\delta}\right)r_2+\kappa_{\mr{u}}r_or_2\right)\nonumber \\
	\left|\sum_{i\in \mc{O} \cup \left(\mc{I} \cap I_{<}\right)}\frac{\lambda_o}{\sqrt{n}} \left(	-h (\xi_{\lambda_o,i}-x_{\vectheta_\eta,i})+h (\xi_{\lambda_o,i})\right) \vecx_i^\top\vectheta_\eta\right|
	& \leq c_{\max}^2L\lambda_o\sqrt{n}\left(\kappa_{\mr{u}}\sqrt{\frac{o}{n}}\left(\sqrt{sr_{d,s}}+\sqrt{r_\delta}\right)r_2+\kappa_{\mr{u}}r_or_2\right).
\end{align}
Combining the arguments above, we see that 
\begin{align}
&\sum_{i=1}^n \lambda_o\sqrt{n} \hat{w}_i'\left(-h(r_{\vecbeta^*+\vectheta_\eta,i}) +h(r_{\vecbeta^*,i})\right) \vecX_i^\top \vectheta_\eta\nonumber \\
&\geq  \frac{\|\Sigma^\frac{1}{2}\vecv\|_2^2}{3}-c_{\max}^2L\lambda_o\sqrt{n}\left(\rho r_{d,s} r_1+r_\delta r_\Sigma+\kappa_{\mr{u}}\sqrt{\frac{o}{n}}\left(\sqrt{sr_{d,s}}+\sqrt{r_\delta}\right)r_2+\kappa_{\mr{u}}r_or_2\right) ,
\end{align}
and  the proof is complete.
\end{proof}

\begin{proof}[Proof of Lemma \ref{1calcunknown}]
From the triangular inequality, we have
\begin{align}
	\left|\sum_{i=1}^n \hat{w}_i'h(r_{\vecbeta^*,i}) \vecX_i^\top \vectheta_\eta \right| \nonumber &\leq \left|\sum_{i\in \mc{I}} \hat{w}_i'h(r_{\vecbeta^*,i}) \vecX_i^\top \vectheta_\eta \right|+\left|\sum_{i\in \mc{O}} \hat{w}_i'h(r_{\vecbeta^*,i}) \vecX_i^\top \vectheta_\eta \right|\nonumber \\
	&\leq \left|\sum_{i\in \mc{I}} \hat{w}_i'h(r_{\vecbeta^*,i}) \vecx_i^\top \vectheta_\eta \right|+\left|\sum_{i\in \mc{O}} \hat{w}_i'h(r_{\vecbeta^*,i}) \vecX_i^\top \vectheta_\eta \right|\nonumber\\
	&= \left|\sum_{i=1}^n \frac{1}{n}h(r_{\vecbeta^*,i}) \vecx_i^\top \vectheta_\eta -\sum_{i \in \mc{O}\cup (\mc{I}\cap I_< )} \frac{1}{n}h(r_{\vecbeta^*,i}) \vecx_i^\top \vectheta_\eta\right|+\left|\sum_{i\in \mc{O}} \hat{w}_i'h(r_{\vecbeta^*,i}) \vecX_i^\top \vectheta_\eta \right|\nonumber\\
	&\leq \left|\sum_{i=1}^n \frac{1}{n}h(r_{\vecbeta^*,i}) \vecx_i^\top \vectheta_\eta\right| +\left|\sum_{i\in \mc{O}} \hat{w}_i'h(r_{\vecbeta^*,i}) \vecX_i^\top \vectheta_\eta \right|+\left|\sum_{i \in \mc{O}\cup (\mc{I}\cap I_<)} \frac{1}{n}h(r_{\vecbeta^*,i}) \vecx_i^\top \vectheta_\eta\right|.
\end{align}
We note that $|h(\cdot)|\leq 1$ and from Lemma \ref{l:w2}, $|\mc{O} \cup \left(\mc{I} \cap I_{<}\right)|\leq (1+2c'_\varepsilon)o$.
Therefore, from Propositions \ref{p:main1}, \ref{p:main:out-un} and \ref{p:main:out2-un}, we have 
\begin{align}
	\left|\sum_{i=1}^n \frac{1}{n}h(r_{\vecbeta^*,i}) \vecx_i^\top \vectheta_\eta\right|
	&\leq c_3L\left(\rho r_{d,s}r_1+r_\delta r_\Sigma\right),\nonumber\\
	\left|\sum_{i \in \mc{O}} \frac{1}{n}h(r_{\vecbeta^*,i}) \vecx_i^\top \vectheta_\eta\right|
	&\leq c_8\sqrt{c_{\rm cut}}L\sqrt{\frac{o}{n}}\sqrt{\kappa_{\mr{u}}^2\left(sr_{d,s}+r_\delta \right)+\kappa_{\mr{u}}^2}r_2,\nonumber\\
	\left|\sum_{i \in \mc{O}\cup (\mc{I}\cap I_<)} \frac{1}{n}h(r_{\vecbeta^*,i}) \vecx_i^\top \vectheta_\eta\right|
	&\leq c_9L\sqrt{\frac{o}{n}}\sqrt{\kappa_{\mr{u}}^2\left(sr_{d,s}+r_\delta \right)+\kappa_{\mr{u}}^2}r_2,
\end{align}
and we see that 
\begin{align}
	\label{a:ine:un:step1-1-1}
	\left|\sum_{i=1}^n \hat{w}_i'h(r_{\vecbeta^*,i}) \vecX_i^\top \vectheta_\eta \right| 
	&\leq 3c'^2_{\max}  L\left(\rho r_{d,s}r_1+r_\delta r_\Sigma+\sqrt{\frac{o}{n}}\sqrt{\kappa_{\mr{u}}^2\left(sr_{d,s}+r_\delta \right)+\kappa_{\mr{u}}^2}r_2\right).
\end{align}
\end{proof}

\begin{proof}[Proof of Lemma \ref{3calcunknown}]
We have
\begin{align}
	&\sum_{i=1}^n \lambda_o\sqrt{n} \hat{w}_i'\left(-h(r_{\vecbeta^*+\vectheta_\eta,i}) +h(r_{\vecbeta^*,i})\right) \vecX_i^\top \vectheta_\eta \nonumber\\
	&= \sum_{i\in \mc{I}} \lambda_o\sqrt{n} \hat{w}_i'\left(-h(r_{\vecbeta^*+\vectheta_\eta,i}) +h(r_{\vecbeta^*,i})\right) \vecX_i^\top \vectheta_\eta +\sum_{i \in \mc{O}} \lambda_o\sqrt{n} \hat{w}_i'\left(-h(r_{\vecbeta^*+\vectheta_\eta,i}) +h(r_{\vecbeta^*,i})\right) \vecX_i^\top \vectheta_\eta\nonumber\\
	&= \sum_{i\in \mc{I}} \lambda_o\sqrt{n} \hat{w}_i'\left(	-h (\xi_{\lambda_o,i}-x_{\vectheta_\eta,i})+h (\xi_{\lambda_o,i})\right) \vecx_i^\top \vectheta_\eta +\sum_{i \in \mc{O}} \lambda_o\sqrt{n} \hat{w}_i'\left(-h(r_{\vecbeta^*+\vectheta_\eta,i}) +h(r_{\vecbeta^*,i})\right) \vecX_i^\top \vectheta_\eta\nonumber\\
	&= \left(\sum_{i=1}^n -\sum_{i\in \mc{O} \cup \left(\mc{I} \cap I_{<}\right)}\right) \frac{\lambda_o}{\sqrt{n}} \left(	-h (\xi_{\lambda_o,i}-x_{\vectheta_\eta,i})+h (\xi_{\lambda_o,i})\right) \vecx_i^\top\vectheta_\eta \nonumber \\
	&\quad +\sum_{i \in \mc{O}} \lambda_o\sqrt{n} \hat{w}_i'\left(-h(r_{\vecbeta^*+\vectheta_\eta,i}) +h(r_{\vecbeta^*,i})\right) \vecX_i^\top \vectheta_\eta\nonumber\\
	&\geq \sum_{i=1}^n \frac{\lambda_o}{\sqrt{n}} \left(	-h (\xi_{\lambda_o,i}-x_{\vectheta_\eta,i})+h (\xi_{\lambda_o,i})\right) \vecx_i^\top\vectheta_\eta-\left|\sum_{i\in \mc{O} \cup \left(\mc{I} \cap I_{<}\right)}\frac{\lambda_o}{\sqrt{n}} \left(	-h (\xi_{\lambda_o,i}-x_{\vectheta_\eta,i})+h (\xi_{\lambda_o,i})\right)\vecx_i^\top\vectheta_\eta\right|\nonumber \\
	&\quad -\left|\sum_{i\in \mc{O}} \lambda_o\sqrt{n} \hat{w}_i'\left(-h(r_{\vecbeta^*+\vectheta_\eta,i}) +h(r_{\vecbeta^*,i})\right) \vecX_i^\top \vectheta_\eta\right|.
\end{align}
We note that $|h(\cdot)|\leq 1$ and from Lemma \ref{l:w2}, $| \mc{O} \cup \left(\mc{I} \cap I_{<}\right)|\leq (1+2c_\varepsilon)o$. Therefore, from Proposition \ref{p:main:sc}, we have
\begin{align}
	\sum_{i=1}^n \frac{\lambda_o}{\sqrt{n}} \left(	-h (\xi_{\lambda_o,i}-x_{\vectheta_\eta,i})+h (\xi_{\lambda_o,i})\right) \vecx_i^\top\vectheta_\eta &\geq \frac{\|\Sigma^\frac{1}{2}\vecv\|_2^2}{3}r_2^2-c_{\max}L\lambda_o\sqrt{n}\left(\rho r_{d,s} r_1+r_\delta r_\Sigma\right)\nonumber\\
	\left|\sum_{i\in \mc{O}} \lambda_o\sqrt{n} \hat{w}_i'\left(-h(r_{\vecbeta^*+\vectheta_\eta,i}) +h(r_{\vecbeta^*,i})\right)\vecX_i^\top \vectheta_\eta\right|
	& \leq c_8\sqrt{c_{\rm cut}}L\sqrt{\frac{o}{n}}\sqrt{\kappa_{\mr{u}}^2\left(sr_{d,s}+r_\delta \right)+\kappa_{\mr{u}}^2}r_2,\nonumber \\
	\left|\sum_{i\in \mc{O} \cup \left(\mc{I} \cap I_{<}\right)}\frac{\lambda_o}{\sqrt{n}} \left(	-h (\xi_{\lambda_o,i}-x_{\vectheta_\eta,i})+h (\xi_{\lambda_o,i})\right) \vecx_i^\top\vectheta_\eta\right|
	& \leq c_8\sqrt{c_{\rm cut}}L\sqrt{\frac{o}{n}}\sqrt{\kappa_{\mr{u}}^2\left(sr_{d,s}+r_\delta \right)+\kappa_{\mr{u}}^2}r_2,.
\end{align}
Combining the arguments above, we see that 
\begin{align}
&\sum_{i=1}^n \lambda_o\sqrt{n} \hat{w}_i'\left(-h(r_{\vecbeta^*+\vectheta_\eta,i}) +h(r_{\vecbeta^*,i})\right) \vecX_i^\top \vectheta_\eta\nonumber \\
&\geq  \frac{\|\Sigma^\frac{1}{2}\vecv\|_2^2}{3}-c'^2_{\max} L\lambda_o\sqrt{n}\left(\rho r_{d,s} r_1+r_\delta r_\Sigma+\sqrt{\frac{o}{n}}\sqrt{\kappa_{\mr{u}}^2\left(sr_{d,s}+r_\delta \right)+\kappa_{\mr{u}}^2}r_2\right) ,
\end{align}
and  the proof is complete.
\end{proof}
\end{document}